\numberwithin{equation}{section}
\theoremstyle{plain}
\newtheorem{thm}{Theorem}
\newtheorem{proposition}{Proposition}
\newtheorem{corollary}{Corollary}
\newtheorem{lemma}{Lemma}
\theoremstyle{definition}
\newtheorem{definition}{Definition}
\newtheorem{setting}{Setting}
\theoremstyle{remark}
\newtheorem{remark}{Remark}
\newtheorem{example}{Example}
\newcommand{\R}{\mathbbm{R}}
\newcommand{\N}{\mathbbm{N}}
\newcommand{\E}{\mathbb{E}}
\renewcommand{\P}{\mathbb{P}}
\newcommand{\rr}{\mathbf{r}}
\begin{document}

\title{Relative perturbation bounds with applications to empirical covariance operators}
\author{Moritz Jirak\thanks{Universit\"{a}t Wien, Austria. E-mail: moritz.jirak@univie.ac.at} \qquad
Martin Wahl\thanks{Humboldt-Universit\"{a}t zu Berlin, Germany. E-mail: martin.wahl@math.hu-berlin.de
\newline
\textit{2010 Mathematics Subject Classifcation.} 60B20, 60F05, 15A42, 47A55, 62H25\newline
\textit{Key words and phrases.} Covariance operator, principal components analysis, perturbation theory, relative bounds, concentration inequalities, limit theorems.}}
\date{}
\maketitle
\begin{abstract}
The goal of this paper is to establish relative perturbation bounds, tailored for empirical covariance operators. Our main results are expansions for empirical eigenvalues and spectral projectors, leading to concentration inequalities and limit theorems. One of the key ingredients is a specific separation measure for population eigenvalues, which we call the relative rank, giving rise to a sharp invariance principle in terms of limit theorems, concentration inequalities and inconsistency results. Our framework is very general, requiring only $p > 4$ moments and allows for a huge variety of dependence structures.

\end{abstract}

\tableofcontents

\section{Introduction}
The empirical covariance operator is a central object in high-dimensional probability. An important question studied in this context is the behaviour of empirical eigenvalues and eigenvectors. Using the empirical covariance operator $\hat\Sigma$ as estimator for the population version $\Sigma$, one wants to ensure that empirical eigenvalues $(\hat \lambda_j)_{j\geq 1}$ and corresponding empirical eigenvectors $(\hat u_j)_{j\geq 1}$ do not deviate too much from their population counterparts $(\lambda_j)_{j\geq 1}$ and $(u_j)_{j\geq 1}$. There is, by now, quite an extensive literature in this area regarding stochastic fluctuations and perturbation bounds. A classical result along these lines is Anderson's central limit theorem. In case of eigenvalues, it states that
\begin{align}\label{intro:clt:anderson}
\sqrt{\frac{n}{\sigma^2}}\frac{\hat{\lambda}_j - \lambda_j}{\lambda_j} \xrightarrow{d} \mathcal{N}(0,1),
\end{align} where $\sigma^2>0$ is the variance of the squared $j$-th Karhunen-Loève coefficient, see e.g.~\cite{And,Dauxois_1982} and \cite{cai:2020:aos:spiked:limit,wang2017,MR4210724} for some more recent results. Another line of substantial research is dealing with high-dimensional phenomena, when the number of observations $n$ is comparable to the dimension $d$, ranging from eigenvector inconsistency to eigenvalue (upward) bias and more. A prominent example is the following: if $d/n\rightarrow \gamma>0$ and there is only a fixed number of spiked eigenvalues, the leading empirical eigenvalues and eigenvectors undergo a phase transition. For instance, if $\lambda_1$ is below a certain threshold, then $\hat{u}_1$ may even be asymptotically orthogonal to $u_1$, that is
\begin{align}\label{intro:inconsistency:eigenvectors}
\langle \hat{u}_1, u_1 \rangle^2 \xrightarrow{\P} 0,
\end{align}
see~\cite{debashis_2007} and also~\cite{baik:aop:2005,boaz:aos:2008,Benaych-Georges:advances:2011} for related contributions. While such high-dimensional phenomena are well understood in the (Gaussian) spiked covariance model, extensions to other probabilistic settings and spectral decays appear to be largely unexplored, remaining an active research area.

One of our main contribution is to demonstrate that the phenomenon of phase transition can also be observed - and characterised - in completely different scenarios. For instance, eigenvector inconsistency such as in \eqref{intro:inconsistency:eigenvectors} may also happen in cases where $d$ is significantly different from $n$. A key quantity in this context turns out to be the map \begin{align}\label{EqDefRelRank}
j \mapsto \rr_j(\Sigma)=\sum_{k\neq j} \frac{\lambda_k}{|\lambda_j - \lambda_k|}+ \frac{\lambda_j}{g_j},
\end{align}
which we refer to as the \textit{relative rank} of $\Sigma$ (we actually consider a generalisation with multiplicities). In \eqref{EqDefRelRank}, $g_j$ denotes the $j$-th spectral gap, defined as the distance of $\lambda_j$ to the rest of the spectrum of $\Sigma$. The relative rank allows us to formulate an interesting invariance principle, which, roughly speaking, goes as follows: if $\rr_j(\Sigma)$ is below a certain critical barrier, then classical results continue to hold. This result is (up to mild moment conditions) invariant with respect to the underlying probability measures. However, if $\rr_j(\Sigma)$ is above the critical barrier, things can break down, and one may even observe the aforementioned eigenvector inconsistency in \eqref{intro:inconsistency:eigenvectors}. To give a flavour of this type of invariance, consider the central limit theorem given in \eqref{intro:clt:anderson}, based on a triangular array $X_1^{(n)},\ldots, X_n^{(n)}$ of independent copies of a random variable $X^{(n)}$ with covariance operator $\Sigma^{(n)}$, $n\geq 1$. Subject to mild moment conditions, we show that \eqref{intro:clt:anderson} remains valid as long as
\begin{align}\label{intro:clt:condition}
\frac{1}{\sqrt{n}}\rr_j(\Sigma^{(n)}) \to 0,
\end{align}
and a related statement holds true for empirical eigenvectors. On the other hand, it is possible to construct a specific sequence of random variables $X^{(n)}$ (with covariance operator $\Sigma^{(n)}$), where the above implication becomes an equivalence: if \eqref{intro:clt:condition} does not hold, then the left-hand side of \eqref{intro:clt:anderson} is not tight, and even a weaker relative consistency does not hold anymore. Moreover, if \eqref{intro:clt:condition} is no longer valid, then the (leading) empirical eigenvector is not consistent anymore, and one can even observe the asymptotic orthogonality \eqref{intro:inconsistency:eigenvectors}. A similar phenomenon is true concerning high probability bounds.

The key to our limit theorems and concentration inequalities are tight relative perturbation bounds, in which the relative rank is the main characteristic. This is achieved by exploiting a contraction property for empirical spectral projectors. We require two ingredients. First, we assume that certain relative coefficients (resp.~certain relative sub-blocks) of the perturbation $\hat\Sigma-\Sigma$ are bounded by some (usually, small) value $x$. Then, if the relative rank satisfies the bound $\rr_j(\Sigma)\leq 1/(3x)$, we establish perturbation expansions for empirical eigenvalues, eigenvectors and spectral projectors. An important aspect of our first order expansions is the fact that the remainder terms are typically of smaller order than the linear perturbation terms and thus scale correctly, a property that plays an important role for our invariance principles.

The study of general perturbation bounds has a long tradition in matrix analysis, functional analysis, and operator theory. Classical perturbation bounds for eigenvalues and eigenspaces include the Weyl inequality and the Davis-Kahan $\sin \Theta$ inequality, see e.g.~\cite{MR1477662,MR2978290}. These bounds have been extended in many directions. A basic tool in perturbation theory for linear operators is the holomorphic functional calculus \cite{MR1009162,K95,C83,MR878974}. Key ingredients such as Cauchy's integral formula and the resolvent equations have been successfully applied to various stochastic perturbation problems, see e.g~\cite{KG00,MR2033885,MR2102509,MR3099123,MR2738536,KL17b} to mention a few. In classical perturbation bounds, deviations of spectral characteristics of $\hat\Sigma$ from their accompanying spectral characteristics of $\Sigma$ are usually controlled in terms of the operator norm (or other relevant norms) of the perturbation $\hat\Sigma-\Sigma$.


Regarding random matrices, a fundamental question is to find precise estimates of corresponding norms. A number of more recent results established tight bounds for the operator norm of (possibly structured) random matrices, see for instance \cite{MR3531673,MR2111932,MR3878726}. However, all those and related results do not seem to directly apply to empirical covariance operators.  Using the method of generic chaining (cf.~\cite{MR4381414}), it has been recently shown in~\cite{KL16b} that for sub-Gaussian i.i.d.~observations the size of $\|\hat\Sigma-\Sigma\|_\infty$ is characterised by $\|\Sigma\|_\infty$ and the effective rank $\rr(\Sigma)=\operatorname{tr}(\Sigma)/\|\Sigma\|_\infty$. Alternative approaches are also offered in \cite{MR3407216,bandeira2021matrix,brailovskaya2022universality}, see also \cite{V12,MR353419} for earlier, related results. Moving to a more special setup, a precise characterisation of the operator norm is possible in terms of the Tracy-Widom law, see for instance \cite{Johnstone_2000, MR2308592, TV12}.

Relative bounds already appeared in other branches of mathematics, see e.g.~the review papers \cite{I98,I00}. For instance, there are relative versions of the Weyl inequality and the Davis-Kahan $\sin \Theta$ theorem, benefitting from considering relative errors and relative spectral gaps. However, these bounds are often (substantially) sub-optimal from a probabilistic perspective and typically involve the very quantities we actually wish to control (e.g.~empirical eigenvalues). On the other hand, despite their usefulness, relative bounds appear to be a rarely studied in the context of empirical covariance operators. Only more recently, there appears to be some interest in this topic. For instance, it has been observed for problems related to empirical covariance operators that relative techniques may lead to substantial improvements over absolute ones, see \cite{M16,mas_complex_2014,RW17}. These works already noticed the relevance of the relative rank, but didn't exploit it to its full potential. Similarly,~\cite{MR3739989,VVU2011} focus on specific structures like low ranks to give improved bounds.

\subsection*{Notation}\label{SecNotation}
Let $(\mathcal{H},\langle\cdot,\cdot\rangle)$ be a separable Hilbert space of dimension $d\in\N\cup\{+\infty\}$ and let $\|\cdot\|$ denote the norm on $\mathcal{H}$, defined by $\|u\|=\sqrt{\langle u,u\rangle}$. Let $\Sigma$ be a self-adjoint and positive trace class operator on $\mathcal{H}$. By the spectral theorem, there exists a sequence $\lambda_1\ge \lambda_2\ge\dots>0$ of  positive eigenvalues (which is either finite or converges to zero), together with an orthonormal system of eigenvectors $u_1,u_2,\dots$ such that $\Sigma$ has the spectral representation $\Sigma =\sum_{j\ge 1}\lambda_j P_j$ with rank-one projectors $P_j=u_j\otimes u_j$. Here, for $u,v\in\mathcal{H}$ we denote by $u\otimes v$ the rank-one operator defined by $(u\otimes v)x=\langle v,x\rangle u$, $x\in \mathcal{H}$. We denote by $\operatorname{tr}(\cdot)$, $\|\cdot\|_1$ and $\|\cdot\|_2$ the trace, the trace norm and the Hilbert-Schmidt norm, respectively. By assumption, we have $\|\Sigma\|_1=\operatorname{tr}(\Sigma)=\sum_{j\geq 1}\lambda_j<\infty$.  Finally, for $j\geq 1$, let $g_j$ be the $j$-th spectral gap defined by $g_j=\min(\lambda_{j-1}-\lambda_j,\lambda_j-\lambda_{j+1})$ for $j\geq 2$ and $g_1=\lambda_1-\lambda_2$.

Let $\mu_1>\mu_2>\dots>0$ be the sequence of positive and distinct eigenvalues of $\Sigma$. For $r\geq 1$, let $\mathcal{I}_r=\{j\geq 1:\lambda_j=\mu_r\}$ and $m_r=|\mathcal{I}_r|$. Let $Q_r$ be the orthogonal projection onto the eigenspace corresponding to $\mu_r$, that is,
\begin{equation}\label{SpectralProj}
 Q_r=\sum_{j\in \mathcal{I}_r}P_j.
\end{equation}
Then the spectral theorem leads to $\Sigma=\sum_{r\geq 1}\mu_rQ_r$,
with convergence in trace norm (and thus also in Hilbert-Schmidt norm). Without loss of generality, we shall assume that the eigenvectors $u_1,u_2,\dots$ form an orthonormal basis of $\mathcal{H}$ such that $\sum_{r\ge 1}Q_r=I$. For $r\geq 1$, define the resolvent
\[
R_r=\sum_{s\neq r}\frac{1}{\mu_s-\mu_r}Q_s.
\]
Finally, for $r\ge 1$, we make use of the abbreviation $\operatorname{tr}_{\geq r}(\Sigma)$  for $\sum_{s\geq r}m_s\mu_s$.

Let $\hat \Sigma$ be another self-adjoint and positive trace class operator on $\mathcal{H}$. We consider $\hat\Sigma$ as a perturbed version of $\Sigma$ and write $E=\hat\Sigma-\Sigma$ for the (additive) perturbation. By the spectral theorem, there exists a sequence $\hat{\lambda}_1\ge \hat{\lambda}_2\ge\dots\ge 0$ of eigenvalues together with an orthonormal system of eigenvectors $\hat{u}_1,\hat{u}_2,\dots$ such that we can write $\hat{\Sigma}=\sum_{j\ge 1}\hat{\lambda}_j \hat P_j$ with with rank-one projectors $\hat P_j=\hat u_j\otimes\hat u_j$. For $r\geq 1$, let
\[
\hat{Q}_r=\sum_{j\in \mathcal{I}_r}\hat{P}_j.
\]
Finally, for $j,k\geq 1$, let
\[
\bar\eta_{jk}=\frac{\langle u_j, Eu_k\rangle}{\sqrt{\lambda_j\lambda_k}}.
\]
If $d$ is finite, then the $\bar\eta_{jk}$ are the coefficients of the relative perturbation $\Sigma^{-1/2}E\Sigma^{-1/2}$ with respect to the orthonormal basis given by the eigenvectors of $\Sigma$.

Throughout the paper, we use the letters $c,C$ for constants that may change from line to line (by a numerical value). If no further dependencies are mentioned, then these constants are absolute.

\section{Relative perturbation bounds}\label{SecPerturb}
In this section, we present our main relative perturbation bounds, which are the key to our invariance principles. The proofs are deferred to Section \ref{SecProofs} below. We begin with recalling the notion of the relative rank.
\begin{definition}\label{DefRelRank} For $j\in \mathbb{N}$, we define
\begin{equation*}
\rr_j(\Sigma)=\sum_{k\neq j} \frac{\lambda_k}{|\lambda_j - \lambda_k|}+\frac{\lambda_j}{g_j}.
\end{equation*}
\end{definition}
The relative rank $\rr_j(\Sigma)$ measures in a weighted way, how well $\lambda_j$ is separated from the rest of the spectrum. 
\begin{thm}\label{ThmExpSimpleEV}
Let $j\geq 1$. Suppose that $\lambda_j$ is a simple eigenvalue, meaning that $\lambda_j\neq \lambda_k$ for all $k\neq j$. Let $x>0$ be such that $|\bar\eta_{kl}|\leq x$ for all $k,l\geq 1$. Suppose that
\begin{equation}\label{EqCCond}
\rr_j(\Sigma)\leq 1/(3x).
\end{equation}
Then we have
\begin{equation}\label{EqEVE}
|\hat{\lambda}_j-\lambda_j-\lambda_j\bar{\eta}_{jj}|\leq Cx^2\rr_j(\Sigma)\lambda_j.
\end{equation}
\end{thm}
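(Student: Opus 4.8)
The plan is to pass to the eigenbasis of $\Sigma$, isolate the direction $u_j$ by a Schur-complement (Feshbach) reduction, and then exploit the relative structure by conjugating the reduced resolvent with $|D-\hat\lambda_j I|^{1/2}$ rather than with $\Sigma^{1/2}$; this last step is exactly what converts the hypothesis $x\,\rr_j(\Sigma)\le 1/3$ into a convergent Neumann series and makes $\rr_j(\Sigma)$ appear in \eqref{EqEVE}. Concretely, in the basis $(u_k)_{k\ge1}$ write $\hat\Sigma=\operatorname{diag}(\lambda_k)+E$ with $E_{kl}=\langle u_k,Eu_l\rangle=\sqrt{\lambda_k\lambda_l}\,\bar\eta_{kl}$, and split $\mathcal H=\mathbb R u_j\oplus H_j$ with $H_j=u_j^\perp$. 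Put $D=\operatorname{diag}(\lambda_k)_{k\ne j}$, let $F$ be the compression of $E$ to $H_j$ and $b=(E_{jk})_{k\ne j}\in H_j$, and set $N=D-\hat\lambda_j I$, $M=N+F$ on $H_j$. Once we know that $\hat\lambda_j$ sits in the gap around $\lambda_j$ (so that $M$ is invertible and $\hat u_j$ has nonzero $u_j$-component), the eigenvalue equation $(\hat\Sigma-\hat\lambda_j I)\hat u_j=0$ forces the vanishing of the corresponding Schur complement, giving the exact identity
\[
\hat\lambda_j-\lambda_j-\lambda_j\bar\eta_{jj}=-\langle b,M^{-1}b\rangle ,
\]
using $E_{jj}=\lambda_j\bar\eta_{jj}$; everything then reduces to bounding the right-hand side.

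Here is the key point. Writing $N=S|N|$ with $S=\operatorname{sign}(N)$ a diagonal $\pm1$-operator, factor $M=|N|^{1/2}(S+\tilde F)|N|^{1/2}$ with $\tilde F=|N|^{-1/2}F|N|^{-1/2}$, whose $(k,l)$-entry is $\sqrt{\lambda_k/|\lambda_k-\hat\lambda_j|}\,\sqrt{\lambda_l/|\lambda_l-\hat\lambda_j|}\,\bar\eta_{kl}$. By $|\bar\eta_{kl}|\le x$ and Cauchy--Schwarz, $\|\tilde F\|_\infty\le\|\tilde F\|_2\le x\sum_{k\ne j}\lambda_k/|\lambda_k-\hat\lambda_j|$; as soon as $|\hat\lambda_j-\lambda_j|\le g_j/2$ we have $|\lambda_k-\hat\lambda_j|\ge\tfrac12|\lambda_k-\lambda_j|$ (since $|\lambda_k-\lambda_j|\ge g_j$), so $\|\tilde F\|_\infty\le 2x\sum_{k\ne j}\lambda_k/|\lambda_k-\lambda_j|\le 2x\,\rr_j(\Sigma)\le 2/3$. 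Hence $(S+\tilde F)^{-1}=(I+S\tilde F)^{-1}S$ is bounded by $3$, and with $\tilde b=|N|^{-1/2}b$ one obtains $|\langle b,M^{-1}b\rangle|=|\langle\tilde b,(S+\tilde F)^{-1}\tilde b\rangle|\le 3\|\tilde b\|^2$, while $\|\tilde b\|^2=\sum_{k\ne j}\lambda_j\lambda_k\bar\eta_{jk}^2/|\lambda_k-\hat\lambda_j|\le 2x^2\lambda_j\sum_{k\ne j}\lambda_k/|\lambda_k-\lambda_j|\le 2x^2\lambda_j\,\rr_j(\Sigma)$. This gives $|\hat\lambda_j-\lambda_j-\lambda_j\bar\eta_{jj}|\le Cx^2\rr_j(\Sigma)\lambda_j$, which is \eqref{EqEVE}. (Note the leading Neumann term $-\langle b,N^{-1}b\rangle$ is already of the exact order $x^2\rr_j(\Sigma)\lambda_j$, so no improvement of the exponent is possible; each further term carries an extra factor $\|\tilde F\|_\infty\lesssim x\,\rr_j(\Sigma)$.)

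The step I expect to be the genuine obstacle is the a priori bound $|\hat\lambda_j-\lambda_j|\le g_j/2$, which is needed both to invert $M$ and to run the gap estimates above: Weyl's inequality $|\hat\lambda_j-\lambda_j|\le\|E\|_\infty$ is useless here, since $\|E\|_\infty$ is not controlled by $x$ and $\rr_j(\Sigma)$, and the relative Ostrowski-type bound reintroduces $\|\Sigma^{-1/2}E\Sigma^{-1/2}\|_\infty$, which is likewise uncontrolled. I would instead bootstrap along the path $\Sigma_s=\Sigma+sE$, $s\in[0,1]$: the hypotheses are preserved ($|\bar\eta_{kl}^{(s)}|=s|\bar\eta_{kl}|\le x$, and $\rr_j(\Sigma)$ does not depend on $s$), so the Schur identity and the estimate of the previous paragraph hold at every $s$ for which $\lambda_j(s)$ is simple and inside the target tube, and they return a self-improving bound $|\lambda_j(s)-\lambda_j|\lesssim x\lambda_j\lesssim g_j\,x\,\rr_j(\Sigma)$; a connectedness argument on $[0,1]$ then propagates this to $s=1$, and the same tube estimate rules out a collision of $\lambda_j(s)$ with a neighbouring eigenvalue. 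Making the numerical constants close this bootstrap is exactly where the sharp threshold $\rr_j(\Sigma)\le 1/(3x)$ is spent, so one must track the Neumann remainder rather than only its operator-norm bound and use $\rr_j(\Sigma)\ge\lambda_j/g_j$ (and $\rr_j(\Sigma)\ge1$). The remaining infinite-dimensional bookkeeping is routine: $\hat\lambda_j$ is bounded away from $0$, hence $|N|^{-1}$ is a bounded operator, $b\in\ell^2$, and $\tilde F$ is Hilbert--Schmidt, so the Schur-complement argument on $H_j$ goes through verbatim.
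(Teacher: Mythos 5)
Your proof is correct, and it is genuinely different from the one in the paper. The paper establishes the a priori separation $|\hat\lambda_j-\lambda_j|\le 3x\lambda_j/2$ via Proposition~\ref{PropEvRC}, which is a variational argument (min--max plus the conjugation lemmas~\ref{LemBLConc}, \ref{LemLD} imported from \cite{RW17}); crucially, the conjugating weight there is $(\lambda_j+y-A)^{-1/2}$ with the \emph{target} threshold $y$ inserted, so the estimate is directly a priori and no bootstrap is needed. The theorem is then proved by expanding $\hat\lambda_j-\lambda_j-\lambda_j\bar\eta_{jj}=\langle\hat u_j,E\hat u_j\rangle-\langle u_j,Eu_j\rangle+\langle\hat u_j,(\Sigma-\lambda_j I)\hat u_j\rangle$ and controlling each term through the contraction bound on $\langle\hat u_j,u_k\rangle$ (Lemma~\ref{LemBasicBBEV}). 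You instead conjugate with $|D-\hat\lambda_j I|^{1/2}$ (the unknown eigenvalue), which gives the clean Schur identity $\hat\lambda_j-\lambda_j-\lambda_j\bar\eta_{jj}=-\langle b,M^{-1}b\rangle$ but forces a homotopy argument for the localization. The bootstrap does in fact close under \eqref{EqCCond}, and the split you hint at is exactly what makes it work: writing $S=\sum_{k\ne j}\lambda_k/|\lambda_j-\lambda_k|$ and $G=\lambda_j/g_j$ (so $\rr_j=S+G$), on the tube $|\hat\lambda_j-\lambda_j|\le g_j/2$ one has $\|\tilde F\|_\infty\le 2xS$ and $\|\tilde b\|^2\le 2x^2\lambda_j S$, hence
$$|\hat\lambda_j-\lambda_j|\ \le\ \lambda_j|\bar\eta_{jj}|+\frac{\|\tilde b\|^2}{1-\|\tilde F\|_\infty}\ \le\ \frac{x\lambda_j}{1-2xS}\ \le\ \frac{\lambda_j}{S+3G}\ <\ \frac{\lambda_j}{2G}\ =\ \frac{g_j}{2},$$
the last two steps using $1/x\ge 3(S+G)$ and $G>0$, with strict inequality, so the set $\{s:\sup_{[0,s]}|\lambda_j(\cdot)-\lambda_j|<g_j/2\}$ is open and closed and hence all of $[0,1]$; had you bounded $\|\tilde b\|^2$ crudely by $2x^2\lambda_j\rr_j$ instead of $2x^2\lambda_j S$, the bootstrap would genuinely fail to close at the threshold $1/(3x)$. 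On the tube this also gives $1-\|\tilde F\|_\infty\ge 1/3$, hence $|\langle b,M^{-1}b\rangle|\le 6x^2\lambda_j\rr_j(\Sigma)$, matching \eqref{EqEVE}. The trade-off between the two proofs is that the paper's contraction lemma is reused verbatim for the eigenvector expansion (Theorem~\ref{ThmExpEVe}), whereas your Schur reduction would require a separate analysis of the coordinates $w=-\alpha M^{-1}b$ to obtain it; on the other hand your argument is self-contained and does not rely on the external Lemmas~\ref{LemBLConc}--\ref{LemLD}.
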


The boundedness condition on the coefficients $\bar\eta_{kl}$ measures how close $\hat \Sigma$ is to $\Sigma$. In particular, if these coefficients are small enough (in absolute value) in the sense that the relative rank condition \eqref{EqCCond} is satisfied, then we obtain a first order perturbation expansion of $\hat\lambda_j$ around $\lambda_j$.

\begin{thm}\label{ThmExpEVe}
Let $j\geq 1$. Suppose that $\lambda_j$ is a simple eigenvalue.  Let $x>0$ be such that $|\bar\eta_{kl}|\leq x$ for all $k,l\geq 1$. Suppose that \eqref{EqCCond} holds. Then we have
\begin{equation}\label{EqEVeE}
\bigg\|\hat{u}_j-u_j-\sum_{k\neq j}\frac{\sqrt{\lambda_j\lambda_k}}{\lambda_j-\lambda_k}\bar\eta_{jk}u_k\bigg\|\leq Cx^2\rr_j(\Sigma) \sqrt{\sum_{k\neq j}\frac{\lambda_j\lambda_k}{(\lambda_j-\lambda_k)^2}}
\end{equation}
In \eqref{EqEVeE}, the sign of $\hat u_j$ is chosen such that $\langle \hat u_j,u_j\rangle>0$.
\end{thm}
Theorems \ref{ThmExpSimpleEV} and \ref{ThmExpEVe} give relative remainder terms for first order perturbation expansions of $\hat\lambda_j$ and $\hat u_j$, scaling with $\lambda_j$ and $(\sum_{k\neq j}\lambda_j\lambda_k/(\lambda_j-\lambda_k)^2)^{1/2}$, respectively. A striking property is that these remainder terms are typically of smaller order than the leading perturbation terms (in fact they scale in the size of the linear perturbation term), provided that the bounds $|\bar\eta_{kl}|\leq x$, $k,l\geq 1$, are sufficiently tight. 
This is in contrast to classical perturbation bounds (see e.g.~\cite{K95}), where the remainder terms are usually controlled in terms of the operator norm (or other relevant norms) of the perturbation $\hat\Sigma-\Sigma$.

Applying the boundedness of the $\bar\eta_{jk}$ to the linear terms, we get the following corollary:

\begin{corollary} \label{CorSimple}
Let $j\geq 1$. Suppose that $\lambda_j$ is a simple eigenvalue. Let $x>0$ be such that $|\bar\eta_{kl}|\leq x$ for all $k,l\geq 1$. Suppose that Condition \eqref{EqCCond} holds. Then we have
\begin{equation*}
|\hat{\lambda}_j-\lambda_j|\leq Cx\lambda_j
\end{equation*}
and
\begin{equation*}
\|\hat{u}_j-u_j\|\leq Cx\sqrt{\sum_{k\neq j}\frac{\lambda_j\lambda_k}{(\lambda_j-\lambda_k)^2}}.
\end{equation*}
\end{corollary}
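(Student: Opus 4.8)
The plan is to derive both bounds directly from Theorems \ref{ThmExpSimpleEV} and \ref{ThmExpEVe} by a triangle inequality, controlling the explicit linear terms via the uniform bound $|\bar\eta_{kl}|\leq x$ and absorbing the quadratic (and cubic) remainders using the relative rank condition \eqref{EqCCond}. No new machinery is needed; the only thing to exploit is that \eqref{EqCCond} forces $x\rr_j(\Sigma)\leq 1/3$, so that $x^2\rr_j(\Sigma)=x\cdot(x\rr_j(\Sigma))\leq x/3$, which is of the same order as the linear term.

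For the eigenvalue bound, I would start from \eqref{EqEVE}, which gives
\[
|\hat\lambda_j-\lambda_j|\leq \lambda_j|\bar\eta_{jj}|+Cx^2\lambda_j\rr_j(\Sigma)\leq x\lambda_j+\tfrac{C}{3}x\lambda_j,
\]
using $|\bar\eta_{jj}|\leq x$ and \eqref{EqCCond} in the last step. This is $\leq Cx\lambda_j$ after renaming the constant. For the eigenvector bound I would first compute the Hilbert-Schmidt norm of the linear term in \eqref{EqEVeE}: since the $u_k$, $k\neq j$, are orthonormal,
\[
\Bigl\|\sum_{k\neq j}\frac{\sqrt{\lambda_j\lambda_k}}{\lambda_j-\lambda_k}\bar\eta_{jk}u_k\Bigr\|_2^2=\sum_{k\neq j}\frac{\lambda_j\lambda_k}{(\lambda_j-\lambda_k)^2}\bar\eta_{jk}^2\leq x^2\sum_{k\neq j}\frac{\lambda_j\lambda_k}{(\lambda_j-\lambda_k)^2}.
\]
Combining this with the remainder estimate in \eqref{EqEVeE} and the triangle inequality gives
\[
\|\hat u_j-u_j\|_2\leq x\sqrt{\sum_{k\neq j}\frac{\lambda_j\lambda_k}{(\lambda_j-\lambda_k)^2}}+Cx^2\rr_j(\Sigma)\sqrt{\sum_{k\neq j}\frac{\lambda_j\lambda_k}{(\lambda_j-\lambda_k)^2}},
\]
and again \eqref{EqCCond} bounds $Cx^2\rr_j(\Sigma)$ by $\tfrac{C}{3}x$, yielding the claimed $Cx\sqrt{\sum_{k\neq j}\lambda_j\lambda_k/(\lambda_j-\lambda_k)^2}$ after renaming. (One could equally well start from \eqref{EqEVeNE} to bound $\|\hat u_j-u_j\|_2^2$, but the route through \eqref{EqEVeE} is cleaner.)

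There is no real obstacle here: the corollary is a formal consequence of the two expansion theorems, and the only point requiring a moment of care is the bookkeeping that turns the second-order remainder into a quantity of the same order as the first-order term, which is exactly what condition \eqref{EqCCond} is designed to deliver. I would therefore present the argument in a few lines, emphasising that the corollary is simply the "crude" form of Theorems \ref{ThmExpSimpleEV} and \ref{ThmExpEVe} obtained by not keeping track of the leading coefficients $\bar\eta_{jj}$ and $\bar\eta_{jk}$.
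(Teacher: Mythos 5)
Your proposal is correct and matches the approach the paper indicates (the text immediately preceding the corollary says precisely that it follows by applying the boundedness of the $\bar\eta_{jk}$ to the linear terms in Theorems \ref{ThmExpSimpleEV} and \ref{ThmExpEVe}, using \eqref{EqCCond} to absorb the remainders). One incidental observation: the paper's internal machinery already establishes both bounds directly as intermediate steps (Lemma \ref{LemBFE} gives $|\hat\lambda_j-\lambda_j|\leq 3x\lambda_j/2$ and Proposition \ref{PropCEVe} gives the eigenvector bound) before ever proving the theorems, so the corollary is logically prior to the theorems in the proof architecture, but your derivation from the stated theorems is equally valid and is the one the paper presents to the reader.
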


To discuss sharpness of the bounds (in the deterministic case), consider the rank-one perturbation $\hat\Sigma=\Sigma+x(v\otimes v)$ with $v=\sum_{k\geq 1}\sqrt{\lambda_k}u_k$. Then $\bar\eta_{kl}=x$ for all $k,l\geq 1$, and we see from Theorem \ref{ThmExpSimpleEV} that
\[
x-Cx^2\rr_j(\Sigma)\leq |\hat{\lambda}_j-\lambda_j|/\lambda_j\leq x+Cx^2\rr_j(\Sigma)
\]
and
\[
x-Cx^2\rr_j(\Sigma)\leq \|\hat{u}_j-u_j\|\Big/\sqrt{\sum_{k\neq j}\frac{\lambda_j\lambda_k}{(\lambda_j-\lambda_k)^2}}\leq x+Cx^2\rr_j(\Sigma)
\]
for all $x$ satisfying \eqref{EqCCond}. For $\rr_j(\Sigma)\leq c/x$ with $c>0$ small enough, the lower and upper bound reduce to $x/C$ and $Cx$, respectively, and thus coincide up to a constant factor.

In the probabilistic setting considered in Section \ref{SecApp}, the main remaining point is to show that, with high probability, the bound $\sup_{k,l\geq 1}|\bar\eta_{kl}|\leq x$ holds for some appropriate $x$. This is achieved by applying standard concentration inequalities in combination with the union bound. In particular, using this strategy, Theorems \ref{ThmExpSimpleEV} and \ref{ThmExpEVe} are not applicable if $\dim \mathcal{H}=\infty$. This drawback can be eliminated by modifying the assumption on the relative coefficients as follows. Let $j\leq d$ be such that $\lambda_j$ is a simple eigenvalue. Consider $x>0$ such that Condition \eqref{EqCCond} holds. Let $j_0\geq 1$ be such that $\lambda_{j_0}\leq \lambda_j/2$. Then inequalities \eqref{EqEVE} and \eqref{EqEVeE} hold provided that for all $k,l< j_0$,
\begin{equation}\label{DefEventCoeff}
|\bar{\eta}_{kl}|,\sqrt{\frac{\sum_{m\geq j_0}\lambda_k\lambda_m\bar{\eta}_{km}^2}{\sum_{m\geq j_0}\lambda_k\lambda_m}},\sqrt{\frac{\sum_{m\geq j_0}\sum_{n\geq j_0}\lambda_m\lambda_n\bar{\eta}_{mn}^2}{\sum_{m\geq j_0}\sum_{n\geq j_0}\lambda_m\lambda_n}}\leq x.
\end{equation}
We give a more general construction, which accounts also for multiplicities of the eigenvalues of $\Sigma$. With a minor abuse of notation, we define in the case of multiple eigenvalues.

\begin{definition}
For $r\geq 1$, we define
\begin{align*}
\rr_r(\Sigma)=\sum_{s\neq r}\frac{m_s\mu_s}{|\mu_r-\mu_s|}+\frac{m_r\mu_r}{\min(\mu_{r-1}-\mu_{r},\mu_r-\mu_{r+1})}.
\end{align*}
\end{definition}

\begin{thm}\label{ThmExpEvMult}
Let $r\geq 1$. Consider $r_0\geq 1$ such that $\mu_{r_0}\leq \mu_r/2$. Let $x>0$ be such that for all $s,t< r_0$,
\begin{equation}\label{EqBoundRelCoeff}
\frac{\|Q_sEQ_t\|_2}{\sqrt{m_s\mu_sm_t\mu_t}},
\frac{\|Q_sEQ_{\geq r_0}\|_2}{\sqrt{m_s\mu_s\operatorname{tr}_{\geq r_0}(\Sigma)}},
\frac{\|Q_{\geq r_0}EQ_{\geq r_0}\|_2}{\operatorname{tr}_{\geq r_0}(\Sigma)}\leq x.
\end{equation}
Suppose that
\begin{equation}\label{EqCCondMult}
\rr_r(\Sigma)\leq 1/(6x).
\end{equation}
Then we have
\begin{equation}\label{EqEVEMult}
\sum_{k=1}^{m_r}|\lambda_k(\hat Q_r(\hat \Sigma-\mu_rI)\hat Q_r)-\lambda_k(Q_rEQ_r)|\leq Cx^2\rr_r(\Sigma)m_r\mu_r,
\end{equation}
where $\lambda_k(\cdot)$ denotes the $k$-th largest eigenvalue.
\end{thm}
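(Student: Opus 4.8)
The plan is to reduce the multiple-eigenvalue case to a finite-dimensional problem by truncating at level $r_0$, mimicking the strategy behind the simple-eigenvalue results (Theorems \ref{ThmExpSimpleEV} and \ref{ThmExpEVe}) but now tracking a whole block rather than a single coordinate. First I would split the Hilbert space as $\mathcal{H} = \mathcal{H}_< \oplus \mathcal{H}_{\geq r_0}$, where $\mathcal{H}_< = \bigoplus_{s < r_0} \operatorname{ran}(P_s)$ is finite-dimensional and $\mathcal{H}_{\geq r_0}$ carries the tail of the spectrum (all of whose eigenvalues are $\leq \mu_r/2$, hence well below $\mu_r$). The three quantities bounded by $x$ in \eqref{EqBoundRelCoeff} are exactly the relative Hilbert–Schmidt norms of the blocks $P_s E P_t$, $P_s E P_{\geq r_0}$, and $P_{\geq r_0} E P_{\geq r_0}$ of the perturbation, with the tail treated as a single "fat" coordinate of size $\operatorname{tr}_{\geq r_0}(\Sigma)$; this is the device that makes the infinite-dimensional tail harmless, since only its aggregate contribution matters.

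Next I would establish eigenvalue separation: under \eqref{EqCCondMult} the $m_r$ largest eigenvalues of $\hat\Sigma$ are exactly those perturbed from $\mu_r$, and they are isolated from the rest of $\hat\Sigma$'s spectrum by a gap comparable to $g$ (the relevant spectral gap of $\mu_r$). This is the analogue of the first claim of Corollary \ref{CorSimple} / Proposition \ref{PropEvRC} at the block level, and I expect it to follow from the same resolvent/quadratic-form argument: write $\hat\lambda_j - \mu_r$ as a solvability condition for $(\hat\Sigma - (\mu_r + y)I)$ restricted appropriately, expand in the blocks of $E$, and use $\rr_r(\Sigma) \leq 1/(6x)$ together with \eqref{EqBoundRelCoeff} to control the cross terms; the factor $\operatorname{tr}_{\geq r_0}(\Sigma)/\mu_r$ is absorbed into $\rr_r(\Sigma)$ because $\mu_{r_0} \leq \mu_r/2$ forces $\sum_{s \geq r_0} m_s\mu_s/|\mu_r - \mu_s| \gtrsim \operatorname{tr}_{\geq r_0}(\Sigma)/\mu_r$. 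Once separation holds, $\hat P_r$ is a well-defined rank-$m_r$ spectral projector given by a Riesz integral of the resolvent of $\hat\Sigma$ around $\mu_r$.

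The core step is then to compare the compressed operator $\hat P_r(\hat\Sigma - \mu_r I)\hat P_r$ with its first-order surrogate $P_r E P_r$. I would use the holomorphic functional calculus: expand $\hat P_r = \frac{1}{2\pi i}\oint (\zeta I - \hat\Sigma)^{-1}\,d\zeta$ as a Neumann series in $E$ around $(\zeta I - \Sigma)^{-1}$, keep the zeroth- and first-order terms (which reproduce $P_r$ and, after compression, the linear expansion $P_r E P_r$), and bound the remainder. Every remainder term is a product of resolvent factors $R_r$ (or $P_r$) alternating with blocks of $E$; estimating it in Hilbert–Schmidt norm, each "interior" block contributes a factor bounded by $x$ times the appropriate relative scale, while the resolvent factors against the tail/cross directions contribute the separation constants that sum to $\rr_r(\Sigma)$. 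Summing the geometric series (which converges by \eqref{EqCCondMult}) yields the remainder bound $C x^2 \rr_r(\Sigma) \cdot m_r\mu_r$. Finally, to pass from operator-norm/Hilbert–Schmidt control of $\hat P_r(\hat\Sigma - \mu_r I)\hat P_r - P_r E P_r$ to the sum of absolute eigenvalue differences $\sum_{k=1}^{m_r}|\lambda_k(\hat P_r(\hat\Sigma-\mu_r I)\hat P_r) - \lambda_k(P_r E P_r)|$ in \eqref{EqEVEMult}, I would invoke the Lidskii–Mirsky inequality (the $\|\cdot\|_1$-version of Weyl's perturbation theorem for self-adjoint operators), which bounds this sum by the trace norm of the difference, itself controlled since everything lives on the $m_r$-dimensional range of $P_r$ up to the remainder; the "in particular" statement for $\hat\lambda_j$ is the $k=1$ case combined with $\lambda_1(P_r E P_r)$ being the top eigenvalue of the compressed perturbation.

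The main obstacle I anticipate is the bookkeeping in the remainder estimate: one must verify that the off-diagonal blocks $P_s E P_t$ with $s$ or $t$ in the "near" range $\{s < r_0\}\setminus\{r\}$, together with the tail blocks, recombine so that the total weight is exactly $\rr_r(\Sigma)$ and not something larger (e.g. a double sum that would overcount). Making this precise requires carefully splitting the resolvent $R_r$ into its near part $\sum_{s \neq r, s < r_0}(\mu_s - \mu_r)^{-1}P_s$ and far part $\sum_{s \geq r_0}(\mu_s - \mu_r)^{-1}P_s$, bounding the far part's interaction with $E$ using only the aggregated tail norms from \eqref{EqBoundRelCoeff}, and checking that the resulting constant is absolute. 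This is routine in spirit but delicate, and is presumably where the constant $6$ in \eqref{EqCCondMult} (versus $3$ in \eqref{EqCCond}) comes from.
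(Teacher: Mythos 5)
Your proposal correctly identifies two structural ingredients that the paper also uses: the reduction from eigenvalue sums to a trace-norm bound via the Hoffman--Wielandt (Lidskii--Mirsky) inequality, and the need for a preliminary eigenvalue-separation step (the paper's Lemma \ref{LemEVConcMult}, proved by reducing to Proposition \ref{PropEvRC}). You also correctly describe the near/far splitting at $r_0$ and the role of $\operatorname{tr}_{\geq r_0}(\Sigma)$ as an aggregated ``fat'' coordinate.

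The core step is where your route diverges from the paper, and where it has a genuine gap. You propose to expand $\hat P_r = \frac{1}{2\pi i}\oint (\zeta I - \hat\Sigma)^{-1}\,d\zeta$ in a Neumann series in $E$ around $(\zeta I - \Sigma)^{-1}$ and sum a geometric series, asserting that it ``converges by \eqref{EqCCondMult}.'' But convergence of that series requires an operator-norm smallness condition of the form $\|R_rE\|_\infty < 1$ (or $\|(\zeta-\Sigma)^{-1}E\|_\infty<1$ on the contour), and the relative conditions \eqref{EqBoundRelCoeff} and \eqref{EqCCondMult} do not imply this. A concrete counterexample: take $\lambda_j = j^{-2}$ (trace class, convex decay as in Section \ref{ExFPCA}) and look at a deep simple eigenvalue $\mu_r = \lambda_j$. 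Then $\rr_j(\Sigma)\lesssim j\log j$, so \eqref{EqCCondMult} can be satisfied with $x\asymp 1/(j\log j)$, while $\|R_j\|_\infty \asymp g_j^{-1}\asymp j^3$ and $\|E\|_\infty$ can be as large as $x\operatorname{tr}(\Sigma)\asymp x$. Thus $\|R_jE\|_\infty$ can be of order $j^2/\log j \gg 1$, and the Neumann series diverges in operator norm even though all the hypotheses of Theorem \ref{ThmExpEvMult} hold. This is precisely the regime (small eigenvalues, large $j$) that the paper cares about, e.g.\ in Corollary \ref{CorrFunctionalDataProj}. The point is not just combinatorial bookkeeping in the remainder (your anticipated difficulty) but that the expansion itself is not available under the hypotheses.

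The paper avoids this by a different mechanism: a one-shot contraction/fixed-point argument, not a power-series expansion. Concretely, Lemma \ref{lemProjCalc1} gives $\|\hat P_rP_s\|_2 \leq 2\|\hat P_rEP_s\|_2/|\mu_r-\mu_s|$ directly from the eigenvalue equation and separation; then Lemma \ref{LemBasIneq} inserts $I=\sum_t P_t$ inside $\hat P_rEP_s$, uses \eqref{EqBoundRelCoeff} on each block, and obtains a self-referencing inequality for $\alpha_s=\sqrt{m_s\mu_s}\,\|\hat P_rP_s\|_2$. Summing over $s$ and invoking \eqref{EqCCondMult} produces $3\sum\alpha_s\leq 2\sqrt{m_r\mu_r}+2\sum\alpha_s$, hence $\sum\alpha_s\leq 2\sqrt{m_r\mu_r}$ and the bound $\|\hat P_rP_s\|_2\leq Cx\sqrt{m_r\mu_rm_s\mu_s}/|\mu_r-\mu_s|$. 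No iteration in $E$, no convergence issue. With this in hand, the paper decomposes $\hat P_r(\hat\Sigma-\mu_rI)\hat P_r-P_rEP_r = (\hat P_rE\hat P_r-P_rEP_r)+\hat P_r(\Sigma-\mu_rI)\hat P_r$ and bounds both pieces in trace norm using the contraction bounds and \eqref{EqBoundRelCoeff}. If you want to salvage a functional-calculus proof, you would have to stop the expansion at second order with a remainder that still contains $(\zeta I-\hat\Sigma)^{-1}$, and then control that remainder via relative overlap bounds $\|\hat P_rP_s\|_2$ --- at which point you are effectively reproving the contraction lemma, which is the paper's actual innovation here.
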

The key point is that the linear approximation of $(\hat\lambda_j/\mu_r-1)_{j\in\mathcal{I}_r}$ is given by the first $m_r$ eigenvalues of $(1/\mu_r)Q_rEQ_r$. For instance, we have the following corollary.

\begin{corollary}
Let $r\geq 1$. Consider $r_0\geq 1$ such that $\mu_{r_0}\leq \mu_r/2$. Let $x$ be such that \eqref{EqBoundRelCoeff} holds. Moreover, suppose that Condition \eqref{EqCCondMult} holds. If $j$ is the smallest integer in $\mathcal{I}_r$, then
\[
|\hat \lambda_j-\mu_r-\lambda_1(Q_rEQ_r)|\leq Cx^2\rr_r(\Sigma)m_r\mu_r.
\]
\end{corollary}

If $|\bar\eta_{kl}|\leq x$ for all $k,l\geq 1$, then \eqref{EqBoundRelCoeff} holds, as can be seen from inserting \eqref{SpectralProj} and squaring out the Hilbert-Schmidt norms. Moreover, \eqref{DefEventCoeff} implies \eqref{EqBoundRelCoeff} if $j_0$ and $r_0$ are related by $\lambda_{j_0}\leq \mu_{r_0}$ and they coincide  if all eigenvalues are simple and $r_0=j_0$ holds. Theorem \ref{ThmExpEvMult} generalises Theorem~\ref{ThmExpSimpleEV}. In fact, in the case when $m_r=1$ with $\mathcal{I}_r=\{j\}$, then  \eqref{EqCCondMult} is satisfied if $\rr_j(\Sigma)\leq 1/(6x)$ and \eqref{EqEVEMult} boils down to \eqref{EqEVE}.

\begin{thm}\label{ThmProjExp}
Let $r\geq 1$. Consider $r_0\geq 1$ such that $\mu_{r_0}\leq \mu_r/2$. Let $x$ be such that \eqref{EqBoundRelCoeff} holds. Moreover, suppose that Condition \eqref{EqCCondMult} holds. Then we have
\begin{equation}\label{EqProjE}
\|\hat{Q}_r-Q_r+R_rEQ_r+Q_rER_r\|_2\leq Cx^2\rr_r(\Sigma) \sqrt{\sum_{s\neq r}\frac{m_r\mu_rm_s\mu_s}{(\mu_r-\mu_s)^2}}
\end{equation}
\end{thm}

Applying \eqref{EqBoundRelCoeff} to the linear terms, we get the following corollary:

\begin{corollary}\label{CorConcentrationMulti}
Let $r\geq 1$. Consider $r_0\geq 1$ such that $\mu_{r_0}\leq \mu_r/2$. Let $x$ be such that \eqref{EqBoundRelCoeff} holds. Moreover, suppose that Condition \eqref{EqCCondMult} holds. Then we have
\begin{equation*}
\sqrt{\sum_{j\in\mathcal{I}_r}(\hat\lambda_j-\mu_r)^2}\leq Cxm_r\mu_r
\end{equation*}
and
\begin{equation*}
\|\hat{Q}_r-Q_r\|_2\leq Cx\sqrt{\sum_{s\neq r}\frac{m_r\mu_rm_s\mu_s}{(\mu_r-\mu_s)^2}}.
\end{equation*}
\end{corollary}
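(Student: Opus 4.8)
The plan is to read off both inequalities from the linear expansions already available — Theorem~\ref{ThmExpEvMult} for the eigenvalues and Theorem~\ref{ThmProjExp} for the spectral projector — by estimating the respective \emph{linear terms} directly through the relative-coefficient hypothesis \eqref{EqBoundRelCoeff}, and then discarding the quadratic remainders by means of \eqref{EqCCondMult}. Indeed, \eqref{EqCCondMult} gives $x\,\rr_r(\Sigma)\le 1/6$, so any remainder of the form $Cx^2\rr_r(\Sigma)\cdot(\text{normaliser})$ equals $Cx\cdot(x\rr_r(\Sigma))\cdot(\text{normaliser})\le (C/6)\,x\cdot(\text{normaliser})$, i.e.\ it is of the same order as the main term. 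Thus the whole argument reduces to controlling the two linear terms.

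For the eigenvalue bound I would first record that, since $(\hat u_j)_{j\in\mathcal{I}_r}$ is an orthonormal family of eigenvectors of $\hat\Sigma$, the self-adjoint operator $A:=\hat P_r(\hat\Sigma-\mu_rI)\hat P_r$ carries exactly the eigenvalues $(\hat\lambda_j-\mu_r)_{j\in\mathcal{I}_r}$ on its $m_r$-dimensional range (and $0$ elsewhere), so that $\sqrt{\sum_{j\in\mathcal{I}_r}(\hat\lambda_j-\mu_r)^2}=\|A\|_2=(\sum_{k=1}^{m_r}\lambda_k(A)^2)^{1/2}$; likewise, writing $B:=P_rEP_r$, one has $\|B\|_2=(\sum_{k=1}^{m_r}\lambda_k(B)^2)^{1/2}$. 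Viewing $(\lambda_k(A))_{k\le m_r}$ and $(\lambda_k(B))_{k\le m_r}$ as vectors in $\R^{m_r}$ and applying the triangle inequality there, followed by $\|\cdot\|_2\le\|\cdot\|_1$,
\[
\sqrt{\sum_{j\in\mathcal{I}_r}(\hat\lambda_j-\mu_r)^2}\le \|P_rEP_r\|_2+\Big(\sum_{k=1}^{m_r}\big(\lambda_k(A)-\lambda_k(B)\big)^2\Big)^{1/2}\le \|P_rEP_r\|_2+\sum_{k=1}^{m_r}|\lambda_k(A)-\lambda_k(B)|.
\]
Then I would bound $\|P_rEP_r\|_2\le x\,m_r\mu_r$ using the first entry of \eqref{EqBoundRelCoeff} with $s=t=r$ (admissible since $\mu_{r_0}\le\mu_r/2<\mu_r$ and the $\mu$'s are strictly decreasing, whence $r<r_0$), and the last sum by $Cx^2\rr_r(\Sigma)\,m_r\mu_r\le(C/6)\,x\,m_r\mu_r$ by Theorem~\ref{ThmExpEvMult} and \eqref{EqCCondMult}. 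Adding gives the first claim. The point of the detour through sorted eigenvalue vectors (rather than a direct Hilbert–Schmidt estimate of $A-B$) is that Theorem~\ref{ThmExpEvMult} only controls the $\ell^1$-distance of the spectra, not $\|A-B\|_2$.

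For the projector bound I would combine \eqref{EqProjE} with the triangle inequality and $\|P_rER_r\|_2=\|R_rEP_r\|_2$ (self-adjointness of $R_r,E,P_r$) to get $\|\hat P_r-P_r\|_2\le 2\|R_rEP_r\|_2+Cx^2\rr_r(\Sigma)\,\big(\sum_{s\neq r}m_r\mu_rm_s\mu_s/(\mu_r-\mu_s)^2\big)^{1/2}$. Since $R_rEP_r=\sum_{s\neq r}(\mu_s-\mu_r)^{-1}P_sEP_r$ decomposes orthogonally along the ranges of the $P_s$, one has $\|R_rEP_r\|_2^2=\sum_{s\neq r}(\mu_s-\mu_r)^{-2}\|P_sEP_r\|_2^2$, which I would split at $r_0$: for $s<r_0$, $s\neq r$, the second entry of \eqref{EqBoundRelCoeff} (with $t=r$) gives $\|P_sEP_r\|_2\le x\sqrt{m_s\mu_sm_r\mu_r}$, so this part is $\le x^2\sum_{s\neq r}m_r\mu_rm_s\mu_s/(\mu_r-\mu_s)^2$; for $s\ge r_0$ I would use $\sum_{s\ge r_0}\|P_sEP_r\|_2^2=\|P_rEP_{\ge r_0}\|_2^2\le x^2m_r\mu_r\operatorname{tr}_{\geq r_0}(\Sigma)$ together with $(\mu_s-\mu_r)^{-2}\le 4\mu_r^{-2}$ (valid as $\mu_s\le\mu_{r_0}\le\mu_r/2$), obtaining $\le 4x^2m_r\operatorname{tr}_{\geq r_0}(\Sigma)/\mu_r$; finally, comparing with the tail $\sum_{s\ge r_0}m_r\mu_rm_s\mu_s/(\mu_r-\mu_s)^2\ge m_r\operatorname{tr}_{\geq r_0}(\Sigma)/\mu_r$ (since $(\mu_r-\mu_s)^2\le\mu_r^2$) shows $\|R_rEP_r\|_2\le\sqrt5\,x\,\big(\sum_{s\neq r}m_r\mu_rm_s\mu_s/(\mu_r-\mu_s)^2\big)^{1/2}$. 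Plugging this back in and absorbing the remainder via \eqref{EqCCondMult} yields the second claim with a new absolute constant.

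The step I expect to require the most care is this $s\ge r_0$ part of the projector estimate: \eqref{EqBoundRelCoeff} controls only the \emph{aggregated} block $P_rEP_{\ge r_0}$, not the individual pieces $P_sEP_r$, so one must exploit the uniform separation $\mu_r-\mu_s\asymp\mu_r$ for $s\ge r_0$ both to convert the aggregated bound into a bound on the $(\mu_s-\mu_r)^{-2}$-weighted sum and to check that the outcome is dominated by the normaliser $\big(\sum_{s\neq r}m_r\mu_rm_s\mu_s/(\mu_r-\mu_s)^2\big)^{1/2}$ of the linear term. Everything else is routine bookkeeping with the triangle inequality and the condition \eqref{EqCCondMult}.
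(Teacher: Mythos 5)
Your proof is correct and follows exactly the route the paper signals (``Applying \eqref{EqBoundRelCoeff} to the linear terms\ldots''): the eigenvalue bound from Theorem~\ref{ThmExpEvMult} combined with $\|P_rEP_r\|_2\le x\,m_r\mu_r$, and the projector bound from Theorem~\ref{ThmProjExp} combined with the split-at-$r_0$ estimate of $\|R_rEP_r\|_2$. One structural remark: the paper's own internal machinery actually establishes the projector bound more directly in Proposition~\ref{PropProjConcMult}, via the identity $\|\hat P_r-P_r\|_2^2=2\langle \hat P_r, I-P_r\rangle=2\sum_{s<r_0,\,s\neq r}\|\hat P_r P_s\|_2^2+2\|\hat P_r P_{\ge r_0}\|_2^2$ and Lemma~\ref{LemBasIneq}; that route avoids a detour through the full expansion (whose proof already uses Proposition~\ref{PropProjConcMult}), whereas your derivation from the stated Theorem~\ref{ThmProjExp} is logically fine but slightly longer. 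Your estimate $\|R_rEP_r\|_2\le Cx\bigl(\sum_{s\neq r}m_r\mu_rm_s\mu_s/(\mu_r-\mu_s)^2\bigr)^{1/2}$ is precisely the computation that appears inside the proof of Theorem~\ref{ThmProjExp} (your constant $\sqrt{5}$ can be tightened to $2$ by noting the two disjoint pieces combine with the common factor $4$, but this is immaterial). For the eigenvalue part, your trick of passing to the sorted eigenvalue vectors in $\R^{m_r}$ and applying the triangle and $\ell^2\le\ell^1$ inequalities is the right way to convert the $\ell^1$-spectral bound of Theorem~\ref{ThmExpEvMult} into a Hilbert--Schmidt bound; implicitly it uses the reading of $\lambda_k(\cdot)$, $k\le m_r$, as the $m_r$ sorted eigenvalues of the rank-$m_r$ block (which can include negative values), the same reading the paper's Hoffman--Wielandt step in the proof of Theorem~\ref{ThmExpEvMult} requires.
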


Finally, note that our main results in Theorems \ref{ThmExpSimpleEV}-\ref{ThmProjExp} give linear expansions, which are sufficient for our probabilistic applications. Higher order expansions can be derived by similar, but more tedious considerations. Moreover, note that in \cite{JW20}, Corollary \ref{CorConcentrationMulti} has been extended to eigenspaces (e.g.~principal subspaces) using an extended notion of relative rank. Yet, \cite{JW20} does not establish the more involved relative linear expansions, which are the key to our (central) limit theorems and phase transitions presented in Section \ref{SecApp}.

\section{Applications to covariance operators: invariance principle \\ and phase transition}\label{SecApp}

In this section, we apply our relative perturbation bounds to the eigenstructure of empirical covariance operators. For simplicity, we only consider i.i.d. sequences. 

\begin{setting}[i.i.d. sequence]\label{iidSetting}
Let $X$ be a random variable taking values in $\mathcal{H}$. Suppose that $X$ is centered and strongly square-integrable, meaning that $\E X =0$ and $\E\|X\|^2<\infty$. Let $\Sigma =\E X\otimes X$ be the covariance operator of $X$, which is a self-adjoint and positive trace class operator on $\mathcal{H}$ (see e.g. \cite[Theorem 7.2.5]{MR3379106}). For $j\geq 1$, let $\eta_j=\lambda_j^{-1/2}\langle u_j, X\rangle$ be the Karhunen-Lo\`{e}ve coefficients of $X$. Suppose that for some $p\geq 4$ and a constant $C_\eta>0$,
\begin{equation}\label{EqMomentAssIID}
\sup_{j\geq 1}\E|\eta_j|^p\leq C_\eta.
\end{equation}
Let $X_1,\dots,X_n$ be $n$ independent copies of $X$ and let
\begin{equation*}
\hat{\Sigma}=\hat{\Sigma}_n = \frac{1}{n}\sum_{i=1}^nX_i\otimes X_i
\end{equation*}
be the sample covariance operator, which is self-adjoint, positive and of finite rank.
\end{setting}

Observe that Setting \ref{iidSetting} is very general. Our only condition regarding the distribution of $X$ is the moment condition \eqref{EqMomentAssIID}, no further assumptions are necessary. Therefore, up to this condition, all of our results are only determined by the relative rank $\rr_j(\Sigma)$ and are thus \textit{invariant} with respect to the distribution of $X$ within this framework. This applies in particular to the limit theorems and concentration inequalities discussed below.

In order to apply our perturbation results to Setting \ref{iidSetting}, the main remaining point is to control the probability of the `good event', defined by the boundedness assumptions in our main results. Recall that, for $x > 0$, this either means
\begin{align}\label{Def:event:simple}
|\bar\eta_{kl}|\leq x \quad \text{for all $k,l\geq 1$,}
\end{align}
or, for $r_0\geq 1$, that
\begin{align}\label{Def:event:general}
\frac{\|Q_sEQ_t\|_2}{\sqrt{m_s\mu_sm_t\mu_t}},
\frac{\|Q_sEQ_{\geq r_0}\|_2}{\sqrt{m_s\mu_s\operatorname{tr}_{\geq r_0}(\Sigma)}},
\frac{\|Q_{\geq r_0}EQ_{\geq r_0}\|_2}{\operatorname{tr}_{\geq r_0}(\Sigma)}\leq x\quad \text{for all $s,t< r_0$.}
\end{align}
The relative coefficients satisfy
\begin{equation}\label{EqKLCoeff}
\bar\eta_{kl}=\frac{\langle u_k, Eu_l\rangle}{\sqrt{\lambda_k\lambda_l}}=\frac{1}{n}\sum_{i=1}^n\frac{\langle u_k, X_i\rangle}{\sqrt{\lambda_k}}\frac{\langle u_l, X_i\rangle}{\sqrt{\lambda_l}}-\delta_{k,l}, \qquad k,l\geq 1,
\end{equation}
and are thus sums of (independent) and centered random variables, where the $i$-th summand involves the product of two Karhunen-Lo\`{e}ve coefficients. To control these coefficients, many powerful concentration inequalities have been developed in the literature. This can be easily done in Setting \ref{iidSetting}, but one may also consider several other models (e.g.~weakly dependent sequences). We refer to Propositions \ref{prop_control_events1} and \ref{prop_control_events2} for a demonstration of such type of results, and to Remark \ref{rem:weakdependence} on how to go beyond the i.i.d.~case.

Finally, we also discuss a second, specific distribution contained in Setting \ref{iidSetting} to explicitly describe the phase transition around the critical barrier provided by the relative rank.

\begin{setting}[Inconsistency model]\label{setting:inconsistency:model}
Let $\Sigma$ be a self-adjoint, positive trace class operator on $\mathcal{H}$ with spectral representation $\Sigma =\sum_{j\ge 1}\lambda_j (u_j\otimes u_j)$. Let $r\geq 1$ and $F=\sum_{j\leq r}\sqrt{\lambda_j}u_j$. Let $\epsilon$ be a Gaussian random variable in $\mathcal{H}$ with expectation $0$ and covariance operator $\Sigma-1/(2r)(F\otimes F)$, and let $f$ be a real random variable defined by $\P(f=0)=1-1/(2r^2)$ and $\P(f=\pm\sqrt{r})=1/(4r^2)$, independent of $\epsilon$. Finally, let $X=f\cdot F+\epsilon$ be the convolution with covariance operator $\Sigma$, and let $X_1,\dots,X_n$ be $n$ independent copies of $X$.
\end{setting}

Constructions of these type are often referred to as factor models in the literature, and typically imply a specific covariance structure. Our construction is different, since we essentially allow for any covariance operator $\Sigma$.

The characteristic feature of such a construction is that despite the uncorrelatedness of the random variables $\eta_k$, that is, $\E \eta_k \eta_l  = 0$ for $k \neq l$, they are highly dependent. This dependence, however, is only manifested in the higher order cumulants.  Although we only consider Setting \ref{setting:inconsistency:model} to discuss phase transitions, our method of proof shows that this phenomenon can be applied to other probability measures exhibiting such a strong higher order dependence. The crucial point here is that such dependencies imply that with high probability, one may extract a (deterministic) rank-one perturbation, which in turn governs all phase transitions. We refer to Section \ref{section:proofs:inconsistency} for more details.

\subsection{Limit theorems}\label{sec:CLT}

This section is dedicated to law of large numbers and central limit theorems. We consider a triangular array $X_1^{(n)},\ldots, X_n^{(n)}$ of independent copies of a random variable $X^{(n)}$ in $\mathcal{H}^{(n)}$ with covariance operator $\Sigma^{(n)}$, satisfying the assumptions in Setting \ref{iidSetting} for all $n\geq 1$.
We use the notation of Section \ref{SecNotation} with an additional superscript $^{(n)}$.
We establish two types of results. First, in Setting \ref{iidSetting}, empirical eigenvalues and eigenvectors are consistent and asymptotically normal as long as the relative rank condition \eqref{intro:clt:condition} holds. Second, if \eqref{intro:clt:condition} fails, then eigenvector inconsistency and eigenvalue (upward) bias hold in Setting \ref{setting:inconsistency:model}. For ease of exposition, we focus on simple eigenvalues.

\begin{thm}\label{CorRootNCons}
In the above triangular array, suppose that $\lambda_j^{(n)}$ is simple for all $n\geq 1$ and that \eqref{EqMomentAssIID} is satisfied with $p\geq 4$ and  $C_\eta$ independent of $n$. Suppose that
\begin{equation}\label{EqCCondAsymp}
 \frac{1}{\sqrt{n}}\rr_j(\Sigma^{(n)})\rightarrow 0\quad\text{as}\quad n\rightarrow \infty.
\end{equation}
Then the sequence $(\sqrt{n}(\hat\lambda_j^{(n)}-\lambda_j^{(n)})/\lambda_j^{(n)})$ is tight, i.e.
\begin{equation}\label{EqRootNCons}
\lim_{R\rightarrow\infty}\sup_{n\geq 1}\P(\sqrt{n}|\hat\lambda_j^{(n)}-\lambda_j^{(n)}|/\lambda_j^{(n)}>R)=0.
\end{equation}
In particular, we have the weighted law of large numbers
\begin{align}\label{EqGapWeightedCons}
    (\hat\lambda_j^{(n)}-\lambda_j^{(n)})/g_j^{(n)}\xrightarrow{\mathbb{P}} 0.
\end{align}
\end{thm}

Roughly speaking, the last assertions in \eqref{EqGapWeightedCons} says that given \eqref{EqCCondAsymp}, it is possible to assign empirical and population eigenvalues correctly, at least asymptotically. The following result shows that in Setting \ref{setting:inconsistency:model} for $j=1$ (see Theorem \ref{thm:inconsistency:j>=2:projector} below for extensions to $j>1$), both properties are indeed equivalent, revealing an interesting phase transition.

\begin{thm}\label{thm:inconsistency:j=1:CLT}
For a sequence of covariance operators $\Sigma^{(n)}$ on Hilbert spaces $\mathcal{H}^{(n)}$ of dimension $d_n=\dim \mathcal{H}^{(n)}<\infty$ satisfying $d_n/\sqrt{n}\rightarrow 0$ as $n\rightarrow\infty$, consider the sequence of models given in Setting \ref{setting:inconsistency:model} with $r_n=d_n$. Then the following three assertions are equivalent:
\begin{itemize}
\item[(a)] $
\frac{\rr_1(\Sigma^{(n)})}{\sqrt{n}}\rightarrow 0\quad\text{as}\quad n\rightarrow \infty.
$
\item[(b)] The sequence $(\sqrt{n}(\hat\lambda_1^{(n)}-\lambda_1^{(n)})/\lambda_1^{(n)})$ is tight and $\frac{1}{\sqrt{n}}\frac{\lambda_1^{(n)}}{\lambda_1^{(n)}-\lambda_2^{(n)}}\rightarrow 0$  as $n\rightarrow \infty$.
\item[(c)] $\frac{\hat\lambda_1^{(n)}-\lambda_1^{(n)}}{\lambda_1^{(n)}-\lambda_2^{(n)}}\xrightarrow{\mathbb{P}} 0$.
\end{itemize}
\end{thm}

If the relative rank condition \eqref{EqCCondAsymp} fails, then (c) of Theorem \ref{thm:inconsistency:j=1:CLT} says that the leading empirical eigenvalue is not (relatively) consistent anymore. This, of course, also affects the behaviour of the empirical eigenvectors and spectral projectors. More precisely, the following two results provide the analogs of Theorems \ref{CorRootNCons} and \ref{thm:inconsistency:j=1:CLT} in the case of spectral projectors.

\begin{thm}\label{ThmTightProjector}
In the above triangular array, assume that $\lambda_j^{(n)}$ is simple for all $n\geq 1$ and that \eqref{EqMomentAssIID} is satisfied with $p\geq 4$ and  $C_\eta$ independent of $n$. Suppose that \eqref{EqCCondAsymp} holds. Then
\begin{align}\label{EqSpectralProjCons}
    \|\hat P_j^{(n)}-P_j^{(n)}\|_2\xrightarrow{\mathbb{P}}0.
\end{align}
Moreover, if additionally $\lambda_{j_0}^{(n)} \leq \lambda_j^{(n)}/2$ for all $n\geq 1$ with $j_0 > j$ independent of $n$, then the sequence
\begin{align*}
    \sqrt{n} \|\hat P_j^{(n)}-P_j^{(n)}\|_2\Big/\sqrt{\sum_{k\neq j}\frac{\lambda_j^{(n)}\lambda^{(n)}_k}{(\lambda^{(n)}_j-\lambda^{(n)}_k)^2}},\quad n\geq 1,
\end{align*}
is tight.
\end{thm}

\begin{remark}
Observe that condition $\lambda_{j_0}^{(n)} \leq \lambda_j^{(n)}/2$ for all $n\geq 1$ with $j_0 > j$ independent of $n$, may be weakened at the cost of higher moment conditions and by strengthening \eqref{EqCCondAsymp}. See, for instance, Corollary \ref{CorIntro}. This also applies to Theorem \ref{Cor:clt:eigenvalues:anderson} below. 
\end{remark}

\begin{thm}\label{thm:inconsistency:j=1:projectors}
For a sequence of covariance operators $\Sigma^{(n)}$ on Hilbert spaces $\mathcal{H}^{(n)}$ of dimension $d_n=\dim \mathcal{H}^{(n)}<\infty$ satisfying $d_n/\sqrt{n}\rightarrow 0$ as $n\rightarrow\infty$, consider the sequence of models given in Setting \ref{setting:inconsistency:model} with $r_n=d_n$. Suppose that $n^{-1/2}\lambda_1^{(n)}/(\lambda_1^{(n)}-\lambda_2^{(n)})\rightarrow 0$ as $n\rightarrow \infty$. Then the following statements are equivalent:
\begin{itemize}
\item[(a)] $
\frac{\rr_1(\Sigma^{(n)})}{\sqrt{n}}\rightarrow 0\quad\text{as}\quad n\rightarrow \infty.
$
\item[(b)] $\|\hat P_1^{(n)}-P_1^{(n)}\|_2\xrightarrow{\mathbb{P}}0$.
\end{itemize}
\end{thm}

It remains to address the issue of phase transitions for $j > 1$. Due to more eigenvalue interactions, this turns out to be a more difficult problem to solve. In contrast to the case $j=1$, there is not a `single upward force', pushing $\hat\lambda_1^{(n)}$ outside the spectrum. Still, under slightly stronger assumptions, we can provide the following result in Setting \ref{setting:inconsistency:model}.

\begin{thm}\label{thm:inconsistency:j>=2:eigenvalue}
For a sequence of covariance operators $\Sigma^{(n)}$ on Hilbert spaces $\mathcal{H}^{(n)}$, consider the sequence of models given in Setting \ref{setting:inconsistency:model} with $r_n\rightarrow\infty$ such that  $r_n/\sqrt{n}\rightarrow 0$. Let $j \geq 2$ and suppose that
\begin{align}
    &\liminf_{n\rightarrow\infty}\frac{1}{\sqrt{n}}\sum_{k= j}^{r_n}\frac{\lambda_k^{(n)}}{\lambda_{j-1}^{(n)}-\lambda_k^{(n)}}>0,\nonumber\\
    &\lim_{n\rightarrow\infty}\frac{1}{\sqrt{n}}\frac{\lambda_{j-1}^{(n)}}{\lambda_{j-1}^{(n)}-\lambda_j^{(n)}}=0,\qquad\lim_{n\rightarrow\infty}\frac{1}{\sqrt{n}}\sum_{k\leq j-2}\frac{\lambda_{k}^{(n)}}{\lambda_k^{(n)}-\lambda_{j-1}^{(n)}}=0.\label{eq:thm:inconsistency:j>=2:Ev}
\end{align}
Then we have
\begin{align}\label{eq:thm:inconsistency:j>=2:Ev:statement}
   \lim_{\delta\rightarrow 0} \liminf_{n\rightarrow\infty}\P(|\hat\lambda_j^{(n)}-\lambda_{j}^{(n)}|/g_{j}^{(n)}\geq 1-\delta)>0.
\end{align}
\end{thm}

The (one-sided) relative rank condition in \eqref{eq:thm:inconsistency:j>=2:Ev} ensures that the $j$-th empirical eigenvalue is pushed upwards towards the $(j-1)$-th population eigenvalue, and hence the relative consistency from \eqref{EqGapWeightedCons} does not hold anymore. At the same time, the $j$-th empirical spectral projector contains more information on the $(j-1)$-th population spectral projector, and the following result shows that empirical eigenvectors become asymptotically orthogonal to their population analogs. More background can be found in Section \ref{section:proofs:inconsistency}.

\begin{thm}\label{thm:inconsistency:j>=2:projector}
For a sequence of covariance operators $\Sigma^{(n)}$ on Hilbert spaces $\mathcal{H}^{(n)}$, consider the sequence of models defined in Setting \ref{setting:inconsistency:model} with $r_n\rightarrow\infty$ such that $r_n/\sqrt{n}\rightarrow 0$. For $j\geq 3$ suppose that
\begin{align}
    &\liminf_{n\rightarrow\infty}\frac{1}{\sqrt{n}}\sum_{k=1,k\neq j^*}^{r_n}\frac{\lambda_k^{(n)}}{\lambda_{j^*}^{(n)}-\lambda_k^{(n)}}>0\quad\text{for }j^*\in\{j-1,j-2\},\nonumber\\
    &
    \lim_{n\rightarrow\infty}\frac{1}{\sqrt{n}}\frac{\lambda_{j-1}^{(n)}}{g_{j-1}^{(n)}}=0,\qquad \lim_{n\rightarrow\infty}\frac{1}{n}\frac{\lambda_{j-1}^{(n)}}{g_{j-1}^{(n)}}\rr_{j-1}(\Sigma^{(n)})=0.\label{eq:thm:inconsistency:j>=2:Proj}
\end{align}
Then we have
\begin{align*}
   \lim_{\delta\rightarrow 0} \liminf_{n\rightarrow\infty}\P(\|\hat P_j^{(n)}-P_j^{(n)}\|_2^2\geq 2-\delta)>0.
\end{align*}
\end{thm}

\begin{remark}
In Theorems~\ref{thm:inconsistency:j=1:CLT} and \ref{thm:inconsistency:j=1:projectors},
Conditions $r_n=d_n<\infty$ and $d_n/\sqrt{n}\rightarrow 0$ can be replaced by $d_n=\infty$ and $r_n/\sqrt{n}\rightarrow 0$, provided that additionally
\begin{align*}
    \frac{1}{\sqrt{n}}\sum_{k>r_n}\frac{\lambda_k^{(n)}}{\lambda_1^{(n)}-\lambda_k^{(n)}}\rightarrow 0\quad\text{as}\quad n\rightarrow \infty.
\end{align*}
In this case, it is possible to characterise the behaviour of $n^{-1/2}\rr_1(\Sigma^{(n)})$ by restricting the sum to indices $k\leq r_n$ and the general inconsistency results from Section \ref{sec:General:inconsistency:results} are still applicable.

In contrast, Theorems \ref{thm:inconsistency:j>=2:eigenvalue} and \ref{thm:inconsistency:j>=2:projector} do not provide an equivalence. In this case, it is enough to formulate the relative rank conditions only for eigenvalues with indices $k\leq r_n$, allowing us to consider also Hilbert spaces $\mathcal{H}^{(n)}$ having infinite dimension.
\end{remark}

Finally, we state asymptotic normality. By Prohorov's theorem, any subsequence of $(\sqrt{n}(\hat\lambda_1^{(n)}-\lambda_1^{(n)})/\lambda_1^{(n)})$ has a subsubsequence converging in distribution. Slightly more can be said by applying e.g.~Theorem \ref{ThmExpSimpleEV}. In fact, we have the following generalisation of Anderson's central limit theorem (see \cite{And}):

\begin{thm}\label{Cor:clt:eigenvalues:anderson}
In the above triangular array, suppose that $\lambda_j^{(n)}$ is simple for all $n\geq 1$ and that \eqref{EqMomentAssIID} is satisfied with $p>4$ and $C_\eta$ independent of $n$. Moreover, suppose that $\lambda_{j_0}^{(n)} \leq \lambda_j^{(n)}/2$ for all $n\geq 1$ with $j_0 > j$ independent of $n$. Suppose that \eqref{EqCCondAsymp} holds. Then we have
\[
\sqrt{\frac{n}{\operatorname{Var}((\eta_j^{(n)})^2)}}\frac{\hat\lambda_j^{(n)}-\lambda_j^{(n)}}{\lambda_j^{(n)}}\xrightarrow{d}\mathcal{N}(0,1).
\]
Moreover, for $k\neq j$, with $\lambda_k^{(n)}>\lambda_{j_0}^{(n)}$ for all $n\geq 1$, we have
\begin{align*}
\frac{\lambda_j^{(n)} - \lambda_k^{(n)}}{\sqrt{\lambda_j^{(n)} \lambda_k^{(n)}}}\frac{\sqrt{n} \langle \hat{u}_j^{(n)},u_k^{(n)} \rangle}{\sqrt{\operatorname{Var}(\eta_{k}^{(n)} \eta_{j}^{(n)})}} \xrightarrow{d} \mathcal{N}\big(0,1\big),
\end{align*}
provided that the sign of $\hat u_j^{(n)}$ is chosen such that $\langle \hat u_j^{(n)},u_j^{(n)}\rangle>0$.
\end{thm}

\begin{remark}\label{RemFactorLindebergFeller}
Assumption $p > 4$ can be replaced by the weaker assumption $p = 4$, together with a Lindeberg-Feller condition on $(\eta_j^{(n)})^2$ and $\eta_{k}^{(n)} \eta_{j}^{(n)}$, respectively. 
\end{remark}

\begin{remark}
The case of $\langle \hat{u}_j^{(n)},u_j^{(n)} \rangle$ can be treated using $2\langle \hat{u}_j^{(n)},u_j^{(n)} \rangle = 2-\|\hat{u}_j^{(n)} - u_j^{(n)}\|^2$ in combination with Theorem \ref{ThmExpEVe}, we omit the details.
\end{remark}

\begin{remark}
The central limit theorems in Theorem \ref{Cor:clt:eigenvalues:anderson} are based on linear perturbation expansions. In principle, higher order expansion can lead to weaker assumptions compared to \eqref{EqCCondAsymp}. In general, though, it appears to be a very difficult question to determine the behaviour in case of $n^{-1/2} \rr_j(\Sigma^{(n)} ) \to \infty$, even if $n^{-1} \rr_j(\Sigma^{(n)}) \to 0$, as this strongly depends on the underlying probability distributions. A general invariance result would certainly be of high interest, but would need to include higher order cumulants, possibly of infinite order.
\end{remark}

\subsection{High probability bounds}\label{sec_concentration_ineq}

Concentration results for norms
of (covariance) operators are a well studied problem in the literature. Despite their importance, corresponding results for eigenvalues (resp.~eigenvectors) are less known, a reason certainly being the fact that an application of Weyl's inequality immediately directs the problem to bounding $\|E\|_{\infty}$, and corresponding results are readily available in the literature (cf.~\cite{MR3407216,MR2946459}). Below, we leave this common path and establish high-probability bounds based on our results obtained in Section \ref{SecPerturb}. It turns out that, in general, the relative rank $\rr_j(\Sigma)$ yields a sharp transition when concentration with high probability is possible for empirical eigenvalues, and when this is not the case. As in Section \ref{sec:CLT}, we focus on simple eigenvalues and corresponding eigenvectors.

\begin{corollary}\label{CorIntro} In Setting \ref{iidSetting}, suppose that \eqref{EqMomentAssIID} is satisfied with $p>4$ and constant $C_\eta$. Then there are constants $c_1,C_1>0$, depending only on $C_\eta$ and $p$, such that, with probability at least $1-d^2(\log n)^{-p/4} n^{1-p/4}$, the inequalities
\[
|\hat\lambda_j-\lambda_j|/\lambda_j\leq C_1\sqrt{\frac{\log n}{n}},\qquad\|\hat P_j-P_j\|_2\Big/\sqrt{\sum_{k\neq j}\frac{\lambda_j\lambda_k}{(\lambda_j-\lambda_k)^2}}\leq C_1\sqrt{\frac{\log n}{n}}
\]
hold uniformly for all $j\geq 1$ satisfying
\begin{equation}\label{EqRelEffRank}
\sqrt{\frac{\log n}{n}}\rr_j(\Sigma)\leq c_1.
\end{equation}
Moreover, for a single $j\geq 1$ and $j_0>j$ satisfying $\lambda_{j_0} \leq \lambda_j/2$ both inequalities remain valid with probability at least $1-j_0^2(\log n)^{-p/4}n^{1-p/4}$.
\end{corollary}

Let us briefly discuss the above result. A striking aspect is its relative nature, both eigenvalues and spectral projectors scale with the correct (first order) variance, which is important for applications. The underlying assumptions are simple and relatively weak. Moreover, the following theorem shows that Condition \eqref{EqRelEffRank} is both sufficient and necessary when considering Setting \ref{setting:inconsistency:model}. 

\begin{thm}\label{thm:inconsistency:j=1:concentration}
For a sequence of covariance operators $\Sigma^{(n)}$ on Hilbert spaces $\mathcal{H}^{(n)}$ of dimension $d_n=\dim \mathcal{H}^{(n)}<\infty$ satisfying $d_n \leq n^{1/2 - \delta}$, some $\delta > 0$, consider the sequence of models given in Setting \ref{setting:inconsistency:model} with $r_n=d_n$. Suppose that $\limsup_{n\rightarrow\infty}((\log n)/n)^{1/2}\lambda_1^{(n)}/(\lambda_1^{(n)}-\lambda_2^{(n)})<\infty$. Then the following statements are equivalent:

\begin{itemize}
    \item[(a)] $\limsup_{n\rightarrow\infty}\sqrt{\frac{\log n}{n}}\rr_1(\Sigma^{(n)})=\infty$.
    \item[(b)] $-\log\big(\P\big(\sqrt{n}(\hat\lambda_1^{(n)}-\lambda_1^{(n)})/\lambda_1^{(n)}\geq  \sqrt{\log n}\big)\big)=o(\log n).$
\end{itemize}
\end{thm}

\subsection{Examples}\label{sec:examples}


\begin{example}[Spiked covariance and factor models]
Among different structures of covariances, the spiked covariance model is of great interest. The signature feature is that several eigenvalues are larger than the remaining, and typically one is interested in recovering these leading eigenvalues and their associated eigenvectors. The spiked part is of importance, as we are usually interested in the directions that explain the most variations of the data. The model has been extensively studied in the literature, see for instance ~\cite{baik:aop:2005,Benaych-Georges:advances:2011,Cai2015_ptrf,debashis_2007,wang2017} and the many references therein.

One way to define the model is as follows, where we suppose for simplicity that $\mathcal{H}=\R^d$. Let $f_1,\dots,f_d$ be orthogonal vectors and $\Gamma$ be a covariance matrix such that
\begin{align}\label{ExFactorAcondi}
C_{\Gamma}^{-1} \leq \lambda_d(\Gamma) \leq \lambda_1(\Gamma) \leq C_{\Gamma}.
\end{align}
For a sequence of weights $\omega_1,\dots,\omega_d$, consider the spiked covariance model
\begin{align}
\Sigma =\sum_{k =1}^d \omega_k^2 f_kf_k^{\top} + \Gamma,
\end{align}
We now equip $\Sigma$ with a probabilistic structure by constructing a factor model generating $\Sigma$. Given a filtration, let $F_1,\dots,F_d$ be a martingale difference sequence with $\E F_{k}^2 = 1$, which serve as the factor loadings. Similarly, let $Y =(Y_{1},\dots,Y_d)^{\top}$ be a random vector, where $Y_1,\dots,Y_d$ form a martingale difference sequence with $\E Y_{k}^2 = 1$. In both cases, the underlying filtration is of no particular relevance. In addition, we assume that $F$ and $Y$ are mutually uncorrelated, that is, all cross correlations are zero. The idiosyncratic error $\epsilon$ and the canonical factor model are then defined as
\begin{align}\label{FactorX}
X = \sum_{k =1}^d \omega_k F_{k} f_k + \epsilon, \quad \epsilon  = \Gamma^{1/2}Y.
\end{align}
Obviously, $X$ has covariance matrix $\Sigma$. In order to apply our results, we need to verify the assumptions made in Settings \ref{iidSetting} regarding the coefficients $\eta_j$. The following proposition provides the connection between the underlying moments of $\eta_j$ and $F_1,\dots,F_d$, $Y$.

\begin{proposition}\label{prop:factor:model}
For $p \geq 2$, suppose that
\begin{align}\label{prop:factor:model:condi}
\E|F_{k}|^p \leq C_F, \quad \E|Y_{k}|^p \leq C_Y
\end{align}
for all $k=1,\dots,d$. Then the conditions above imply that
\begin{equation*}
\max_{j \geq 1} \E|\eta_j|^p \leq C_{\eta},
\end{equation*}
where $C_{\eta}$ only depends on $C_{F}$, $C_{Y}$ and $C_{\Gamma}$. In particular, if $X_1,\dots,X_n$ are independent copies of $X$, then Setting \ref{iidSetting} applies.
\end{proposition}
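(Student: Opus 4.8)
The plan is to express the Karhunen-Loève coefficients $\eta_j$ of $X$ in terms of the factor loadings $F_k$ and the idiosyncratic coefficients $Y_k$, and then bound the $L^p$-norm of $\eta_j$ by splitting $X$ into its spiked part $\sum_k \omega_k F_k f_k$ and its idiosyncratic part $\epsilon = A^{1/2} Y$. Write $X = \sum_{k} \omega_k F_k f_k + A^{1/2} Y$. Since $\eta_j = \lambda_j^{-1/2} \langle u_j, X \rangle$, we have
\begin{align*}
\sqrt{\lambda_j}\, \eta_j = \sum_{k=1}^d \omega_k F_k \langle u_j, f_k \rangle + \langle u_j, A^{1/2} Y \rangle = \sum_{k=1}^d \omega_k F_k \langle u_j, f_k \rangle + \sum_{k=1}^d Y_k \langle A^{1/2} u_j, e_k \rangle,
\end{align*}
where $(e_k)$ is the standard basis of $\R^d$. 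First I would record that $\E \eta_j = 0$, which is immediate since both $F$ and $Y$ are martingale-difference (in particular mean-zero) sequences. Then, by the triangle inequality in $L^p$ and Minkowski's inequality applied twice, together with the moment bounds \eqref{prop:factor:model:condi}, I would estimate
\begin{align*}
\E^{1/p}|\sqrt{\lambda_j}\,\eta_j|^p \leq C_F^{1/p} \sum_{k=1}^d \omega_k |\langle u_j, f_k \rangle| + C_Y^{1/p} \sum_{k=1}^d |\langle A^{1/2} u_j, e_k \rangle|.
\end{align*}
This is still not of the right form; the main work is to replace these $\ell^1$-type sums by $\ell^2$-type sums that can be compared to $\lambda_j = \langle u_j, \Sigma u_j \rangle$.

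The cleaner route, which I would actually carry out, is to avoid the triangle inequality over all $d$ indices and instead use the independence (or at least uncorrelatedness plus the martingale-difference structure) to control moments of the two sums directly. For the idiosyncratic term, $\sum_k Y_k \langle A^{1/2} u_j, e_k \rangle$ is a linear combination of a martingale-difference sequence, so by Burkholder's (or Rosenthal's) inequality its $p$-th moment is bounded by $C_p\big(\sum_k \langle A^{1/2} u_j, e_k \rangle^2\big)^{p/2} \max_k \E|Y_k|^p \leq C_p C_Y \|A^{1/2} u_j\|^{p} = C_p C_Y \langle u_j, A u_j \rangle^{p/2}$. Since $A \leq \Sigma$ (as $F$ is positive semidefinite), this is $\leq C_p C_Y \lambda_j^{p/2}$. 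Similarly, for the spiked term, the vectors $f_k$ are orthogonal, so $\sum_k \omega_k F_k \langle u_j, f_k\rangle$ is again a linear combination of a martingale-difference sequence with coefficients $\omega_k \langle u_j, f_k\rangle$; Burkholder gives a $p$-th moment bounded by $C_p\big(\sum_k \omega_k^2 \langle u_j, f_k\rangle^2\big)^{p/2}\max_k \E|F_k|^p$. Now $\sum_k \omega_k^2 \langle u_j, f_k\rangle^2 = \langle u_j, F u_j\rangle \leq \langle u_j, \Sigma u_j\rangle = \lambda_j$ because $A$ is positive semidefinite. Combining the two bounds via the triangle inequality in $L^p$ gives $\E^{1/p}|\sqrt{\lambda_j}\,\eta_j|^p \leq C_p^{1/p}(C_F^{1/p} + C_Y^{1/p})\lambda_j^{1/2}$, and dividing by $\sqrt{\lambda_j}$ yields $\E|\eta_j|^p \leq C_\eta$ with $C_\eta$ depending only on $C_F$, $C_Y$, $p$ (and not even on $C_A$ in this argument, though keeping the cruder triangle-inequality route it would enter through $\|A^{1/2}\|$).

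The main obstacle is making the two norm comparisons $\langle u_j, A u_j\rangle \leq \lambda_j$ and $\langle u_j, F u_j\rangle \leq \lambda_j$ rigorous and noticing that they are exactly what the decomposition $\Sigma = F + A$ with both summands positive semidefinite provides; once this is in place, the moment control is a routine application of a martingale-difference moment inequality (Burkholder/Rosenthal) to linear combinations with square-summable coefficients. A secondary, purely bookkeeping obstacle is handling the general (weakly dependent, Setting \ref{weakSetting}) case: there one additionally needs to verify that the coupling distances $\theta_{ip}$ of the $\eta_{ij}$ inherit the decay from those of $F$ and $Y$, which follows from the same linearity and the Cauchy-Schwarz/triangle bound $\E^{1/p}|\eta_{ij} - \eta_{ij}'|^p \leq \lambda_j^{-1/2}\big(\sum_k \omega_k^2\langle u_j,f_k\rangle^2\big)^{1/2}\max_k\E^{1/p}|F_{ik}-F_{ik}'|^p + (\text{analogous } Y\text{-term})$. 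Finally, the last sentence of the proposition (that Setting \ref{iidSetting} applies when $(X_i)$ is i.i.d. with $p\geq 4$) is then immediate, since \eqref{EqMomentAssIID} is precisely the bound just established and the remaining hypotheses of Setting \ref{iidSetting} (centering, square-integrability, trace-class covariance) hold by construction.
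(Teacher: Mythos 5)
Your proof is correct and follows essentially the same strategy as the paper: decompose $\langle X, u_j\rangle$ into the spiked part $\sum_k \omega_k F_k\langle u_j, f_k\rangle$ and the idiosyncratic part $\langle A^{1/2}Y, u_j\rangle$, apply Burkholder together with the $L^{p/2}$ triangle inequality to each martingale-difference linear combination, and then compare the resulting quadratic forms to $\lambda_j = \langle u_j,\Sigma u_j\rangle$. The one genuine difference is how you handle the idiosyncratic term: the paper bounds $\|A^{1/2}u_j\|^2\le\lambda_1(A)$ and then compares $\lambda_1(A)$ to $\lambda_j$ via $\lambda_j\ge\lambda_d(A)$ and the condition number bound $\lambda_1(A)/\lambda_d(A)\le C_A^2$ from \eqref{ExFactorAcondi}, whereas you observe directly that $\|A^{1/2}u_j\|^2=\langle u_j,Au_j\rangle\le\langle u_j,\Sigma u_j\rangle=\lambda_j$ because $F$ is positive semidefinite. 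Your version is slightly sharper: it yields a constant $C_\eta$ depending only on $C_F$, $C_Y$ and $p$, without any dependence on $C_A$, and does not invoke \eqref{ExFactorAcondi} for this bound at all. Both arguments then read off the last sentence of the proposition in the same way, since \eqref{EqMomentAssIID} is exactly the uniform moment bound just proved and the remaining hypotheses of Setting \ref{iidSetting} hold by construction.
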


As an immediate consequence, all the results of Sections \ref{sec:CLT} and \ref{sec_concentration_ineq} apply. Observe that due to the Gaussianity in Setting \ref{setting:inconsistency:model}, we can always find a martingale structure with corresponding $(F_{k})$ by independence. Consequently, also all inconsistency results of Sections \ref{sec:CLT} and \ref{sec_concentration_ineq} apply.
\end{example}


\begin{example}[Functional data: trace class operators]
In the context of high-dimensional data, functional principal component analysis (FPCA) is becoming more and more important. The characteristic feature of FPCA is that the underlying Hilbert space $\mathcal{H}$ has (possibly) infinite dimension, while the covariance operator of the corresponding data is assumed to be of trace class. A comprehensive overview and some leading examples can be found in ~\cite{jolliffe_book_2002,ramsay_silverman_2005,hoermann_2010}. Seeking the optimal subspace, prediction or approximation, statisticians are therefore facing the problem of model selection with respect to some risk function (cf.~\cite{blanchard_2007,cai_yuan_2012,MR2332269,mas_hilgert_2013,meister_2011}), as the actual decay rate of $\lambda_j$ is usually unknown. For optimal results, precise deviation bounds for $\hat{\lambda}_j$ and $\hat{P}_j$ are essential for these kind of problems. In the literature, rather strong, explicit assumptions like polynomial or exponential decay of eigenvalues are typically imposed in this context, see for instance~\cite{MR2332269,hall_hosseini_2009} and the references above. Using our results from Section \ref{SecApp}, relatively general results can be obtained. To this end, let us assume that
\begin{align}\label{condi_convex}
\text{there is a convex function $\lambda:\R_{\geq 0}\rightarrow\R_{\geq 0}$, such that $ \lambda(j)=\lambda_j $ and $\lim_{j\rightarrow\infty}\lambda(j) = 0$.}
\end{align}
We then have the bounds (cf.~\cite{cardot_mas_sarda_2007})
\begin{align}\label{eq_convex_condi}
\sum_{k\neq j} \frac{\lambda_k }{|\lambda_j - \lambda_k|} \leq C j \log j \quad \text{and} \quad \sum_{k\neq j} \frac{\lambda_k \lambda_j }{(\lambda_j - \lambda_k)^2} \leq C j^2,
\end{align}
where $C$ is a constant which only depends on $\operatorname{tr}(\Sigma)$.
Condition \eqref{condi_convex} is quite general, and is valid in particular for polynomially and exponentially decaying eigenvalues. Using \eqref{condi_convex}, it is very easy to validate the relative error bounds and conditions of Section \ref{SecApp}. For example, we have the following result in expectation
\begin{corollary}\label{CorrFunctionalDataProj}
Suppose we are in Setting \ref{iidSetting}
with $p \geq 16$. If \eqref{condi_convex} holds,
then
\begin{align*}
\E\|\hat{P}_j - P_j\|_{\infty}^2 \leq  \E\|\hat{P}_j - P_j\|_2^2 \leq Cj^2/n,  \quad 1 \leq j \leq C \sqrt{n} (\log n)^{-5/2}.
\end{align*}
\end{corollary}
Corollary \ref{CorrFunctionalDataProj} is (up to log terms) optimal in the case where $\lambda_j =C j^{-\alpha-1}$, $\alpha > 0$. For such a decay, given the  Setting \ref{iidSetting} with $\sup_{j\geq 1}\E|\eta_j|^{2p} \leq p!C^{p}$ for all $p\geq 1$, \cite{mas_complex_2014} shows that for any $j \geq 1$
\begin{align}\label{eq_ryumgaart_2}
\E\bigl\|\hat{P}_j - P_j\|_{\infty}^2 \geq c (j^2/n)\wedge 1 .
\end{align}
Hence we obtain the optimal bound for almost the whole range (up to the factor $(\log n)^{-5/2}$), where the trivial bound $2$ does not apply. Moreover, we only require the mild conditions of Setting \ref{iidSetting}. Given the stronger assumption that all moments of $\eta_j$ exist for all $j \geq 1$,~\cite{M16} also established a matching upper bound for the region $j \leq n^{1/2-b}$, $b > 0$. It is interesting to note that the stochastic behaviour of the $\eta_j$ - in terms of their dependence structure - is irrelevant for the optimal algebraic structure conditions in this case.

\end{example}

\section{Proofs of the perturbation bounds}\label{SecProofs}
The purpose of this section is to prove the results from Section \ref{SecPerturb}.

\subsection{Relative Weyl and Davis-Kahan inequalities}\label{SecRelWeylDK}
We derive some auxiliary perturbation bounds for eigenvalues and spectral projectors. We start with the following result established in \cite{JW20,RW17}.
\begin{lemma}[Proposition 1 of \cite{JW20}]\label{LemEvRC}
For all $j\geq 1$ and $y>0$, we have the implications
\begin{align*}
\|T_{\geq j}(y)ET_{\geq j}(y)\|_\infty&\leq 1\Rightarrow
\hat \lambda_j-\lambda_j\leq y,\\
\|T_{\leq j}(y)ET_{\leq j}(y)\|_\infty&\leq 1\Rightarrow
 \hat\lambda_j-\lambda_j\geq -y,
\end{align*}
with operators $T_{\geq j}(y)$ and $T_{\leq j}(y)$ defined by
\begin{align*}
T_{\geq j}(y)=\sum_{k\geq  j}(\lambda_j+y-\lambda_k)^{-1/2}P_k,\qquad T_{\leq j}(y)=\sum_{k\leq  j}(\lambda_k+y-\lambda_j)^{-1/2}P_k.
\end{align*}
\end{lemma}
The following corollary of Lemma \ref{LemEvRC} follows from bounding the operator norm by the Hilbert-Schmidt norm. It serves as a first eigenvalue separation step in the proofs of our linear expansions.

\begin{corollary}\label{PropEvRC}
For all $j\geq 1$ and $y>0$, we have the implications
\begin{equation}\label{EqEvRD}
\sum_{k\geq j}\sum_{l\geq j}\frac{\lambda_k}{\lambda_j+y-\lambda_k}\frac{\lambda_l}{\lambda_j+y-\lambda_l}\bar{\eta}_{kl}^2\leq 1\Rightarrow
\hat \lambda_j-\lambda_j\leq y
\end{equation}
and
\begin{equation}\label{EqEvLD}
\sum_{k\leq j}\sum_{l\leq j}\frac{\lambda_k}{\lambda_k+y-\lambda_j}\frac{\lambda_l}{\lambda_l+y-\lambda_j}\bar{\eta}_{kl}^2\leq 1\Rightarrow
 \hat\lambda_j-\lambda_j\geq -y.
\end{equation}
\end{corollary}
For simple eigenvalues, Lemma \ref{LemEvRC} implies a relative Weyl bound. For this purpose, let
\begin{align}\label{Def:delta:j}
   \delta_j=\| T_j(\hat\Sigma-\Sigma) T_j\Vert_\infty,\qquad T_j=| R_j|^{1/2}+ g_j^{-1/2}P_j,
\end{align}
where $| R_j|^{1/2}=\sum_{k\neq j}|\lambda_k-\lambda_j|^{-1/2} P_k$ is the square-root of the reduced resolvent of $ \Sigma$ at $\lambda_j$.

\begin{corollary}\label{lem:relative:Weyl}
If $\delta_j\leq 1$, then we have $|\hat\lambda_j-\lambda_j|\leq g_j\delta_j$.
\end{corollary}

\begin{proof}
For $y=\delta_jg_j$, we have
\begin{align*}
\|T_{\geq j}(\delta_jg_j)ET_{\geq j}(\delta_jg_j)\|_\infty
&\leq \Vert(|R_j|^{1/2}+(\delta_jg_j)^{-1/2}P_j)E(|R_j|^{1/2}+(\delta_jg_j)^{-1/2}P_j)\Vert_\infty\\
&\leq \delta_j^{-1}\Vert(|R_j|^{1/2}+g_j^{-1/2}P_j)E(|R_j|^{1/2}+g_j^{-1/2}P_j)\Vert_\infty\leq 1,
\end{align*}
as can be seen by simple properties of the operator norm, using that $\sqrt{\lambda_j+\delta_jg_j-\lambda_k}\geq \sqrt{\lambda_j-\lambda_k}$ for every $k>j$. Similarly, we have $\|T_{\leq j}(\delta_jg_j)ET_{\leq j}(\delta_jg_j)\|_\infty\leq 1$. The claim now follows from Lemma \ref{LemEvRC}.
\end{proof}

\begin{lemma}\label{lemma:relative:DK}
We have $\|\hat P_j- P_j\|_2\leq C\delta_j$ for some absolute constant $C>1$.
\end{lemma}

\begin{proof}
We first show that if $\delta_j<1/2$, then
\begin{equation}\label{EqEqTaylorSPp1}
\Vert|R_j|^{-1/2}\hat P_j\Vert_2\leq \frac{\Vert|R_j|^{1/2}E P_j\Vert_2}{1-2\delta_j},
\end{equation}
where $| R_j|^{-1/2}=\sum_{k\neq j}|\lambda_k-\lambda_j|^{1/2} P_k$. Combining Corollary \ref{lem:relative:Weyl} with $(\hat \lambda_j-\lambda_k)\hat P_j P_k=\hat P_jE P_k$, $k\neq j$, leads to (see e.g.~the proof of Lemma 4 in \cite{JW20})
\begin{equation}\label{EqNormalizedBasicIneq}
\Vert|R_j|^{-1/2}\hat P_j\Vert_2\leq \frac{\Vert|R_j|^{1/2}E \hat P_j\Vert_2}{1-\delta_j}.
\end{equation}
Applying the triangle inequality and the identities $I=P_j+(I-P_j)$ and $I-P_j=|R_j|^{1/2}|R_j|^{-1/2}$, we get
\begin{align}
\Vert|R_j|^{1/2}E \hat P_j\Vert_2&\leq \Vert|R_j|^{1/2}E P_j\hat P_j\Vert_2+\Vert|R_j|^{1/2}E (I-P_i)\hat P_j\Vert_2\nonumber\\
&=\Vert|R_j|^{1/2}E P_j\hat P_j\Vert_2+\Vert|R_j|^{1/2}E |R_j|^{1/2}|R_j|^{-1/2}\hat P_j\Vert_2\nonumber\\
&\leq \Vert|R_j|^{1/2}E P_j\hat P_j\Vert_2+\Vert|R_j|^{1/2}E |R_j|^{1/2}\Vert_\infty\Vert|R_j|^{-1/2}\hat P_j\Vert_2\nonumber\\
&\leq \Vert|R_j|^{1/2}E P_j\Vert_2+\delta_j\Vert|R_j|^{-1/2}\hat P_j\Vert_2\label{EqRecIneq}.
\end{align}
Inserting \eqref{EqRecIneq} into \eqref{EqNormalizedBasicIneq}, we obtain \eqref{EqEqTaylorSPp1}. Now, if $\delta_j\geq 1/4$, then the lemma is trivially true since $\|\hat P_j- P_j\|_2$ is always bounded above by $\sqrt{2}$. Thus, assume $\delta_j\leq 1/4$. We have
\begin{align*}
    \|\hat P_j- P_j\|_2=\sqrt{2}\|(I-P_j)\hat P_j\|_2\leq \sqrt{2}g_j^{-1/2} \Vert|R_j|^{-1/2} \hat P_j\Vert_2.
\end{align*}
Inserting \eqref{EqEqTaylorSPp1}, we get
\begin{align*}
    \|\hat P_j- P_j\|_2\leq \sqrt{8}g_j^{-1/2}\Vert|R_j|^{1/2}E P_j\Vert_2\leq \sqrt{8}\delta_j.
\end{align*}
This completes the proof.
\end{proof}

\subsection{Separation of eigenvalues}

\begin{lemma}\label{LemBFE}
Let $j\geq 1$. Suppose that $\lambda_j$ is a simple eigenvalue. Let $x>0$ be such that $|\bar\eta_{kl}|\leq x$ for all $k,l\geq 1$ and Condition \eqref{EqCCond} holds.
Then we have
\begin{equation}\label{EqEVC}
|\hat{\lambda}_j-\lambda_j|\leq  3x\lambda_j/2.
\end{equation}
In particular, the inequality  $|\hat\lambda_j-\lambda_k|\geq |\lambda_j-\lambda_k|/2$ holds for all $k\neq j$.
\end{lemma}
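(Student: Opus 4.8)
The goal is to bound $|\hat\lambda_j - \lambda_j|$ by $\tfrac32 x\lambda_j$ using Proposition~\ref{PropEvRC}, which converts sums of the form $\sum\sum \tfrac{\lambda_k}{\lambda_j+y-\lambda_k}\tfrac{\lambda_l}{\lambda_j+y-\lambda_l}\bar\eta_{kl}^2 \le 1$ into one-sided eigenvalue bounds. The plan is to apply \eqref{EqEvRD} and \eqref{EqEvLD} with the specific choice $y = \tfrac32 x\lambda_j$ and verify that the hypotheses hold under the boundedness $|\bar\eta_{kl}|\le x$ and the relative-rank condition \eqref{EqCCond}.

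First I would treat the upper bound \eqref{EqEvRD}. With $y = \tfrac32 x\lambda_j$, bound $\bar\eta_{kl}^2 \le x^2$ and factor the double sum as $\big(\sum_{k\ge j}\tfrac{\lambda_k}{\lambda_j+y-\lambda_k}\big)^2$. For $k=j$ the summand is $\tfrac{\lambda_j}{y} = \tfrac{2}{3x}$. For $k>j$ we have $\lambda_j + y - \lambda_k \ge \lambda_j - \lambda_k > 0$, so the summand is at most $\tfrac{\lambda_k}{\lambda_j-\lambda_k}$, and $\sum_{k>j}\tfrac{\lambda_k}{\lambda_j-\lambda_k} \le \rr_j(\Sigma) \le \tfrac{1}{3x}$ by \eqref{EqCCond}. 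Hence the inner sum is at most $\tfrac{1}{x}$, so the whole expression is at most $x^2 \cdot \tfrac{1}{x^2} = 1$, and \eqref{EqEvRD} gives $\hat\lambda_j - \lambda_j \le y = \tfrac32 x\lambda_j$.

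For the lower bound \eqref{EqEvLD}, the same $y$ works: $\bar\eta_{kl}^2\le x^2$ and the double sum factors as $\big(\sum_{k\le j}\tfrac{\lambda_k}{\lambda_k+y-\lambda_j}\big)^2$. For $k=j$ the summand is again $\tfrac{\lambda_j}{y} = \tfrac{2}{3x}$. For $k<j$ we have $\lambda_k + y - \lambda_j \ge \lambda_k - \lambda_j > 0$, so the summand is at most $\tfrac{\lambda_k}{\lambda_k-\lambda_j} = \tfrac{\lambda_k}{|\lambda_j-\lambda_k|}$, and summing over $k<j$ is again bounded by $\rr_j(\Sigma)\le\tfrac{1}{3x}$. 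So the inner sum is at most $\tfrac1x$, the product at most $1$, and \eqref{EqEvLD} yields $\hat\lambda_j - \lambda_j \ge -\tfrac32 x\lambda_j$. Combining the two gives \eqref{EqEVC}.

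Finally, the ``in particular'' claim: for $k\neq j$, $|\hat\lambda_j - \lambda_k| \ge |\lambda_j - \lambda_k| - |\hat\lambda_j - \lambda_j| \ge |\lambda_j-\lambda_k| - \tfrac32 x\lambda_j$. Since $\tfrac{\lambda_j}{g_j}\le\rr_j(\Sigma)\le\tfrac{1}{3x}$ and $g_j \le |\lambda_j-\lambda_k|$ for all $k\neq j$, we get $\tfrac32 x\lambda_j \le \tfrac32 x \cdot \tfrac{g_j}{3x} = \tfrac{g_j}{2} \le \tfrac12|\lambda_j-\lambda_k|$, so $|\hat\lambda_j-\lambda_k| \ge \tfrac12|\lambda_j-\lambda_k|$. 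The only mild subtlety is bookkeeping the $k=j$ term in the double sums and checking $\lambda_j + y - \lambda_k$ stays positive for $k > j$ (resp.\ $\lambda_k + y - \lambda_j > 0$ for $k < j$), which is automatic; there is no real obstacle here since Proposition~\ref{PropEvRC} does the heavy lifting.
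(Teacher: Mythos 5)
Your proof is correct and follows essentially the same approach as the paper: both apply Proposition~\ref{PropEvRC} with the choice $y=\tfrac32 x\lambda_j$, bound $\bar\eta_{kl}^2\leq x^2$, factor the double sums, split off the $k=j$ term (contributing $\lambda_j/y = 2/(3x)$), control the remaining sum by $\rr_j(\Sigma)\leq 1/(3x)$ via \eqref{EqCCond}, and deduce the ``in particular'' claim from $\lambda_j/g_j\leq\rr_j(\Sigma)$ and $g_j\leq|\lambda_j-\lambda_k|$. No substantive differences.
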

\begin{proof}
For the first claim, it suffices to show that the assumptions in \eqref{EqEvRD} and \eqref{EqEvLD} in Corollary \ref{PropEvRC} are satisfied with $y=3x\lambda_j/2$. Indeed, for this choice, we have
\begin{align*}
&\bigg(\sum_{k\geq j}\sum_{l\geq j}\frac{\lambda_k}{\lambda_j+y-\lambda_k}\frac{\lambda_l}{\lambda_j+y-\lambda_l}\bar{\eta}_{kl}^2\bigg)^{1/2}\\
&\leq x\sum_{k\geq j}\frac{\lambda_k}{\lambda_j+y-\lambda_k}\leq 2/3+x\sum_{k>j}\frac{\lambda_k}{\lambda_j-\lambda_k}\leq 1,
\end{align*}
as can be seen from the fact that $|\bar\eta_{kl}|\leq x$ for all $k,l\geq 1$ and  Condition \eqref{EqCCond}. The proof of the assumption in \eqref{EqEvLD} follows the same line of arguments.

For the second claim, note that $|\hat{\lambda}_j-\lambda_k|=|\lambda_j-\lambda_k+\hat{\lambda}_j-\lambda_j|\geq |\lambda_j-\lambda_k|-|\hat{\lambda}_j-\lambda_j|$. Inserting the inequality
\[
|\lambda_j-\hat{\lambda}_j|\leq 3x\lambda_j/2\leq 3x\rr_j(\Sigma)|\lambda_j-\lambda_k|/2\leq |\lambda_j-\lambda_k|/2,
\]
which follows from the first claim and Condition \eqref{EqCCond}, the second claim follows.
\end{proof}

\subsection{Contraction argument for eigenvectors}

\begin{lemma}\label{LemBasicBBEV} Let $j\geq 1$. Suppose that $\lambda_j$ is a simple eigenvalue. Let $x>0$ be such that $|\bar\eta_{kl}|\leq x$ for all $k,l\geq 1$ and Condition \eqref{EqCCond} holds. Then the inequality
\begin{equation*}
|\langle \hat u_j,u_k\rangle|\leq Cx \frac{\sqrt{\lambda_j\lambda_k}}{|\lambda_j-\lambda_k|}
\end{equation*}
holds for all $k\neq j$, with $C=6$.
\end{lemma}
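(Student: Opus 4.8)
The plan is to use the eigenvalue equation $\hat\Sigma \hat u_j = \hat\lambda_j \hat u_j$ together with the separation estimate from Lemma \ref{LemBFE}. First I would take the inner product of the identity $(\hat\Sigma-\Sigma)\hat u_j = \hat\lambda_j \hat u_j - \Sigma \hat u_j$ with $u_k$ for $k\neq j$, obtaining $\langle u_k, E\hat u_j\rangle = (\hat\lambda_j - \lambda_k)\langle u_k,\hat u_j\rangle$, so that
\[
\langle \hat u_j, u_k\rangle = \frac{\langle u_k, E\hat u_j\rangle}{\hat\lambda_j-\lambda_k}.
\]
By Lemma \ref{LemBFE}, $|\hat\lambda_j-\lambda_k|\geq|\lambda_j-\lambda_k|/2$, so it remains to bound $|\langle u_k, E\hat u_j\rangle|$. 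Expanding $\hat u_j = \sum_{l\geq 1}\langle \hat u_j,u_l\rangle u_l$ gives $\langle u_k, E\hat u_j\rangle = \sum_{l\geq 1}\langle u_k,Eu_l\rangle\langle\hat u_j,u_l\rangle = \sum_{l\geq 1}\sqrt{\lambda_k\lambda_l}\,\bar\eta_{kl}\langle\hat u_j,u_l\rangle$.

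The key step is then to control this sum using the boundedness $|\bar\eta_{kl}|\leq x$ and a Cauchy–Schwarz argument that brings in $\sqrt{\lambda_j}$. Writing $\sum_l\sqrt{\lambda_k\lambda_l}|\bar\eta_{kl}||\langle\hat u_j,u_l\rangle|\leq x\sqrt{\lambda_k}\sum_l\sqrt{\lambda_l}|\langle\hat u_j,u_l\rangle|$, I would apply Cauchy–Schwarz to the sum over $l$: $\sum_l\sqrt{\lambda_l}|\langle\hat u_j,u_l\rangle| = \sum_l \sqrt{\lambda_l/\lambda_j}\cdot\sqrt{\lambda_j}|\langle\hat u_j,u_l\rangle|$. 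The difficulty is that $\sum_l\lambda_l/\lambda_j$ need not be small; instead one should split off the index $l=j$, using $|\langle\hat u_j,u_j\rangle|\leq 1$, and for $l\neq j$ bootstrap the very bound we are proving — but that would be circular. So the cleaner route, and the one I would take, is to write $\langle\hat u_j, u_k\rangle$ in terms of the full vector and use a fixed-point/contraction argument: set $a_k = |\langle\hat u_j,u_k\rangle|$ for $k\neq j$ and derive from the above the inequality
\[
a_k \leq \frac{2x\sqrt{\lambda_j\lambda_k}}{|\lambda_j-\lambda_k|}\Big(1 + \sum_{l\neq j}\sqrt{\lambda_l/\lambda_j}\,a_l\Big),
\]
having bounded the $l=j$ term by $1$.

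Now I would close the argument by estimating the correction sum. Multiply through by $\sqrt{\lambda_k/\lambda_j}$ and sum over $k\neq j$: letting $S = \sum_{k\neq j}\sqrt{\lambda_k/\lambda_j}\,a_k$ and noting $\sum_{k\neq j}\frac{\sqrt{\lambda_k/\lambda_j}\cdot\sqrt{\lambda_j\lambda_k}}{|\lambda_j-\lambda_k|} = \sum_{k\neq j}\frac{\lambda_k}{|\lambda_j-\lambda_k|}\leq \rr_j(\Sigma)$, we get $S\leq 2x\,\rr_j(\Sigma)(1+S)$. Since Condition \eqref{EqCCond} gives $2x\,\rr_j(\Sigma)\leq 2/3$, this yields $S\leq 2$, hence $1+S\leq 3$, and plugging back $a_k\leq \frac{2x\sqrt{\lambda_j\lambda_k}}{|\lambda_j-\lambda_k|}\cdot 3 = 6x\frac{\sqrt{\lambda_j\lambda_k}}{|\lambda_j-\lambda_k|}$, which is exactly the claim with $C=6$. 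The main obstacle is making the splitting of the $l=j$ term and the summation bookkeeping rigorous so that the geometric series argument actually closes with the stated constant; one must be careful that the sum defining $S$ is finite a priori (which follows since $\hat u_j$ is a unit vector and, by Lemma \ref{LemBFE}, $|\lambda_j-\lambda_k|$ is bounded below on the relevant index set after comparing with the gap), so that absorbing $S$ into the left-hand side is legitimate.
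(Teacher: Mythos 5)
Your proof is correct and follows essentially the same route as the paper: the eigenvalue identity $\langle\hat u_j,u_k\rangle=\langle u_k,E\hat u_j\rangle/(\hat\lambda_j-\lambda_k)$, the separation $|\hat\lambda_j-\lambda_k|\geq|\lambda_j-\lambda_k|/2$ from Lemma~\ref{LemBFE}, expansion of $E\hat u_j$ in the eigenbasis of $\Sigma$, and the same summation/contraction argument, with your $S=\sum_{k\neq j}\sqrt{\lambda_k/\lambda_j}\,a_k$ being exactly the paper's $\sum_{k\neq j}\alpha_k$ divided by $\sqrt{\lambda_j}$, where $\alpha_k=\sqrt{\lambda_k}|\langle\hat u_j,u_k\rangle|$. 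The one small extra point you raise — that $S<\infty$ a priori, so that the absorption step is legitimate — is a genuine detail the paper leaves implicit, and it does hold since $\Sigma$ is trace class and $\hat u_j$ is a unit vector.
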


\begin{proof}
By Parseval's identity and the definition of the coefficients $\bar \eta_{kl}$, we have
\begin{align*}
\langle \hat u_j,Eu_k\rangle&=\sum_{l\geq 1}\langle \hat u_j,u_l\rangle\langle  u_l,Eu_k\rangle\\
&=\langle \hat u_j,u_j\rangle\langle  u_j,Eu_k\rangle+\sum_{l\neq j}\langle \hat u_j,u_l\rangle\langle  u_l,Eu_k\rangle\\
&=\langle\hat u_j,u_j\rangle\sqrt{\lambda_k\lambda_j}\bar\eta_{kj}+\sum_{l\neq j}\langle\hat u_j,u_l\rangle\sqrt{\lambda_k\lambda_l}\bar\eta_{kl}
\end{align*}
for all $k\neq j$ and thus
\begin{align}
\sqrt{\lambda_k}\langle \hat u_j,u_k\rangle&=\frac{\sqrt{\lambda_k}}{\hat \lambda_j-\lambda_k}\langle \hat u_j,Eu_k\rangle\nonumber\\
&=\frac{\lambda_k}{\hat \lambda_j-\lambda_k}\big(\bar\eta_{kj}\sqrt{\lambda_j}\langle\hat u_j,u_j\rangle+\sum_{l\neq j}\bar\eta_{kl}\sqrt{\lambda_l}\langle\hat u_j,u_l\rangle\big)\label{EqEvEq}
\end{align}
(note that by Lemma \ref{LemBFE}, we have $\hat\lambda_j\neq \lambda_k$). Setting
\[
\alpha_{l}=\sqrt{\lambda_l}|\langle\hat u_j,u_l\rangle|\qquad\forall l\neq j,
\]
we get from \eqref{EqEvEq}, the triangle inequality, Lemma \ref{LemBFE}, and the boundedness assumption on the $\bar\eta_{kl}$ that
\begin{align}\label{EqRecEq}
\alpha_k&\leq \frac{\lambda_k}{|\hat \lambda_j-\lambda_k|}\big(|\bar\eta_{kj}|\sqrt{\lambda_j}+\sum_{l\neq j}|\bar\eta_{kl}|\alpha_l\big)\leq 2x\frac{\lambda_k}{|\lambda_j-\lambda_k|}\big(\sqrt{\lambda_j}+\sum_{l\neq j}\alpha_l\big)
\end{align}
for all $k\neq j$. Summing over $k\neq j$ and using \eqref{EqCCond}, we get the contraction inequality
\begin{equation}
3\sum_{k\neq j}\alpha_k\leq 2\sqrt{\lambda_j}+2\sum_{l\neq j}\alpha_l,
\end{equation}
and thus
\begin{equation*}
\sum_{k\neq j}\alpha_k\leq 2\sqrt{\lambda_j}.
\end{equation*}
Plugging this into \eqref{EqRecEq} we have proven the inequality
\begin{equation}
\alpha_k\leq 6x\frac{\lambda_k\sqrt{\lambda_j}}{|\lambda_j-\lambda_k|}
\end{equation}
for all $k\neq j$.  Dividing through by $\sqrt{\lambda}_k$, the claim follows.
\end{proof}

\begin{proposition}\label{PropCEVe}  Let $j\geq 1$. Suppose that $\lambda_j$ is a simple eigenvalue. Let $x>0$ be such that $|\bar\eta_{kl}|\leq x$ for all $k,l\geq 1$ and Condition \eqref{EqCCond} holds. Then the inequality
\begin{equation*}
\|\hat u_j-u_j\|\leq Cx \sqrt{\sum_{k\neq j}\frac{\lambda_j\lambda_k}{(\lambda_j-\lambda_k)^2}}\leq Cx\rr_j(\Sigma)
\end{equation*}
holds for all $k\neq j$. Here, the sign of $\hat u_j$ is chosen such that $\langle \hat u_j,u_j\rangle>0$.
\end{proposition}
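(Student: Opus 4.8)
The plan is to reduce the estimate to the pointwise control of the off-diagonal inner products $\langle \hat u_j, u_k\rangle$ for $k \neq j$, which is exactly the content of Lemma \ref{LemBasicBBEV}, and to combine it with an elementary Parseval-type identity for $\|\hat u_j - u_j\|_2^2$.

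First I would expand $\hat u_j$ in the orthonormal basis $(u_k)_{k \geq 1}$. Since $\|\hat u_j\|_2 = \|u_j\|_2 = 1$, this gives, on the one hand, $\|\hat u_j - u_j\|_2^2 = 2 - 2\langle \hat u_j, u_j\rangle$, and, on the other hand, $\langle \hat u_j, u_j\rangle^2 + \sum_{k \neq j}\langle \hat u_j, u_k\rangle^2 = 1$. Writing $t = \langle \hat u_j, u_j\rangle$ and using the sign normalization $t \geq 0$ together with $t \leq 1$ (Cauchy--Schwarz), the elementary inequality $1 - t \leq 1 - t^2$ yields $\|\hat u_j - u_j\|_2^2 = 2(1-t) \leq 2(1-t^2) = 2\sum_{k\neq j}\langle \hat u_j, u_k\rangle^2$.

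Next I would insert Lemma \ref{LemBasicBBEV}, which gives $\langle \hat u_j, u_k\rangle^2 \leq 36 x^2 \lambda_j\lambda_k/(\lambda_j - \lambda_k)^2$ for every $k \neq j$. Summing over $k \neq j$ (the series converges, being dominated by $\|\hat u_j\|_2^2 = 1$, and, once the last step below is in place, by $\rr_j(\Sigma)^2 < \infty$) then produces $\|\hat u_j - u_j\|_2 \leq 6\sqrt{2}\, x \big(\sum_{k\neq j}\lambda_j\lambda_k/(\lambda_j - \lambda_k)^2\big)^{1/2}$, which is the first claimed inequality with the explicit constant $C = 6\sqrt{2}$.

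For the second inequality I would use that $|\lambda_j - \lambda_k| \geq g_j$ for all $k \neq j$, which is immediate from the definition of the $j$-th spectral gap and the simplicity of $\lambda_j$ (so that $\lambda_{j-1} > \lambda_j > \lambda_{j+1}$). Hence $\sum_{k\neq j}\lambda_j\lambda_k/(\lambda_j - \lambda_k)^2 \leq (\lambda_j/g_j)\sum_{k\neq j}\lambda_k/|\lambda_j - \lambda_k| \leq \rr_j(\Sigma)^2$, since both $\lambda_j/g_j$ and $\sum_{k\neq j}\lambda_k/|\lambda_j - \lambda_k|$ are bounded by $\rr_j(\Sigma)$; taking square roots finishes the argument. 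No genuine obstacle remains: the substantive step --- the contraction estimate for the coefficients $\sqrt{\lambda_l}\,|\langle \hat u_j, u_l\rangle|$ --- has already been carried out in Lemma \ref{LemBasicBBEV}. The only point to watch is that the sign normalization of $u_j$ is precisely what licenses the passage $1 - t \leq 1 - t^2$; without it one would be stuck with $\|\hat u_j - u_j\|_2$ close to $2$.
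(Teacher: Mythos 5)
Your proof is correct and follows essentially the same route as the paper's: Parseval plus the sign normalization to reduce $\|\hat u_j-u_j\|_2^2$ to $2\sum_{k\neq j}\langle\hat u_j,u_k\rangle^2$, then Lemma \ref{LemBasicBBEV}, then $|\lambda_j-\lambda_k|\geq g_j$ for the second inequality. The only cosmetic difference is that you pass through $1-t\leq 1-t^2$ directly, whereas the paper bounds $(1-\langle\hat u_j,u_j\rangle)^2=\|\hat u_j-u_j\|_2^4/4\leq\|\hat u_j-u_j\|_2^2/2$; both yield the same intermediate bound.
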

\begin{proof}
By Parseval's identity, we have
\begin{equation}\label{EqNormPars}
\|\hat u_j-u_j\|^2=\sum_{k\geq 1}\langle \hat u_j-u_j,u_k\rangle^2=\sum_{k\neq j}\langle \hat u_j,u_k\rangle^2+(1-\langle \hat u_j,u_j\rangle)^2.
\end{equation}
On the other hand, we have $\|\hat u_j-u_j\|^2=2(1-\langle \hat u_j,u_j\rangle)$. Since $\langle \hat u_j,u_j\rangle>0$, we get $\|\hat u_j-u_j\|^2\leq 2$ and thus $(1-\langle \hat u_j,u_j\rangle)^2=\|\hat u_j-u_j\|^4/4\leq \|\hat u_j-u_j\|^2/2$. Inserting this into \eqref{EqNormPars} and using Lemma \ref{LemBasicBBEV}, we get
\[
\|\hat u_j-u_j\|^2\leq 2\sum_{k \neq j}\langle \hat u_j,u_k\rangle^2\leq Cx^2 \sum_{k\neq j}\frac{\lambda_j\lambda_k}{(\lambda_j-\lambda_k)^2},
\]
which
gives the first inequality. The second inequality follows from Condition \eqref{EqCCond}.
\end{proof}

\subsection{Proof of Theorems \ref{ThmExpSimpleEV} and \ref{ThmExpEVe}}

Apart from the improved constant in condition \eqref{EqCCond}, Theorem \ref{ThmExpSimpleEV} can also be deduced from Theorem \ref{ThmExpEvMult}. For the sake of completeness, we give the direct proof.

\begin{proof}[Proof of Theorem \ref{ThmExpSimpleEV}] We have
\begin{align*}
\hat{\lambda}_j-\lambda_j-\lambda_j\bar{\eta}_{jj}&=\langle \hat u_j,\hat\Sigma\hat u_j\rangle-\lambda_j\langle  \hat u_j,\hat u_j\rangle-\langle u_j,Eu_j\rangle\\
&=\langle \hat u_j,E\hat u_j\rangle-\langle  u_j,E u_j\rangle+\langle \hat u_j,(\Sigma-\lambda_jI)\hat u_j\rangle
\end{align*}
and thus
\begin{equation}\label{EqEVDec}
|\hat{\lambda}_j-\lambda_j-\lambda_j\bar{\eta}_{jj}|\leq |\langle \hat u_j,E\hat u_j\rangle-\langle  u_j,E u_j\rangle|+|\langle \hat u_j,(\Sigma-\lambda_jI)\hat u_j\rangle|.
\end{equation}
We begin with the second term on the right-hand side of \eqref{EqEVDec}. By Parseval's identity, we have
\begin{align*}
\langle \hat u_j,(E-\lambda_jI)\hat u_j\rangle=\sum_{k\neq j}\langle \hat u_j,u_k\rangle\langle (\Sigma-\lambda_jI)\hat u_j,u_k\rangle=\sum_{k\neq j}(\lambda_k-\lambda_j)\langle \hat u_j,u_k\rangle^2.
\end{align*}
From this, the triangle inequality, Lemma \ref{LemBasicBBEV}, we conclude that
\begin{align*}
|\langle \hat u_j,(E-\lambda_jI)\hat u_j\rangle|&
\leq\sum_{k\neq j}|\lambda_j-\lambda_k|\langle \hat u_j,u_k\rangle^2\leq Cx^2\sum_{k\neq j}\frac{\lambda_j\lambda_k}{|\lambda_j-\lambda_k|}\leq Cx^2 \lambda_j\rr_j(\Sigma).\nonumber
\end{align*}
Similarly, the first term can be written as
\begin{align*}
&\langle \hat u_j,E\hat u_j\rangle-\langle  u_j,E u_j\rangle\\
&=\sum_{k,l\geq 1}\langle \hat u_j,u_k\rangle\langle \hat u_j,u_l\rangle\langle u_k,E u_l\rangle-\langle  u_j,E u_j\rangle\\
&=\sum_{k,l\neq j}\langle \hat u_j,u_k\rangle\langle \hat u_j,u_l\rangle\langle u_k,E u_l\rangle+2\sum_{k\neq j}\langle \hat u_j,u_k\rangle\langle \hat u_j,u_j\rangle\langle u_k,E u_j\rangle+(\langle \hat u_j,u_j\rangle^2-1)\langle u_j,E u_j\rangle.
\end{align*}
From this and the triangle inequality, we obtain
\begin{align*}
|\langle \hat u_j,E\hat u_j\rangle-\langle  u_j,E u_j\rangle|
&\leq \sum_{k,l\neq j}|\sqrt{\lambda_k}\langle \hat u_j,u_k\rangle\sqrt{\lambda_l}\langle \hat u_j,u_l\rangle\bar\eta_{kl}|\\
&+2\sum_{k\neq j}|\sqrt{\lambda_k}\langle \hat u_j,u_k\rangle\sqrt{\lambda_j}\langle \hat u_j,u_j\rangle\bar\eta_{kj}|+\lambda_j\bar\eta_{jj}\sum_{k\neq j}\langle \hat u_j,u_k\rangle^2.
\end{align*}
Using Lemma \ref{LemBasicBBEV}, the boundedness assumption on the $\bar\eta_{kl}$, and Condition \eqref{EqCCond}, we conclude that
\begin{align*}
&|\langle \hat u_j,E\hat u_j\rangle-\langle  u_j,E u_j\rangle|\\
&\leq Cx^3\lambda_j\sum_{k,l\neq j}\frac{ \lambda_k}{|\lambda_j-\lambda_k|}\frac{ \lambda_l}{|\lambda_j-\lambda_l|}+Cx^2 \lambda_j\sum_{k\neq j}\frac{\lambda_k}{|\lambda_j-\lambda_k|}+Cx^3\lambda_j\sum_{k\neq j}\frac{\lambda_j\lambda_k}{(\lambda_j-\lambda_k)^2}\\
&\leq Cx^3\lambda_j\rr_j^2(\Sigma)+Cx^2\lambda_j\rr_j(\Sigma)+Cx^3\lambda_j\rr_j^2(\Sigma)\leq Cx^2\lambda_j\rr_j(\Sigma).
\end{align*}
This completes the proof.
\end{proof}
\begin{proof}[Proof of Theorem \ref{ThmExpEVe}] For each $k\neq j$, we have
\[
\langle \hat u_j, u_k\rangle=\frac{\langle  u_j, Eu_k\rangle}{\lambda_j-\lambda_k}+\frac{\langle \hat u_j-u_j, Eu_k\rangle}{\lambda_j-\lambda_k}+\frac{\lambda_j-\hat\lambda_j}{\lambda_j-\lambda_k}\langle \hat u_j, u_k\rangle,
\]
as can be seen from inserting the equality $(\hat\lambda_j-\lambda_k)\langle \hat u_j, u_k\rangle=\langle \hat u_j, Eu_k\rangle$.
Thus
\begin{align*}
&\hat u_j-u_j-\sum_{k\neq j}\frac{\langle  u_j, Eu_k\rangle}{\lambda_j-\lambda_k}u_k\\
&=\sum_{k\neq j}\langle \hat u_j, u_k\rangle u_k+(\langle \hat u_j, u_j\rangle-1)u_j-\sum_{k\neq j}\frac{\langle  u_j, Eu_k\rangle}{\lambda_j-\lambda_k}u_k\\
&=\sum_{k\neq j}\frac{\langle \hat u_j-u_j, Eu_k\rangle}{\lambda_j-\lambda_k}u_k+\sum_{k\neq j}\frac{\lambda_j-\hat\lambda_j}{\lambda_j-\lambda_k}\langle \hat u_j, u_k\rangle u_k+(\langle \hat u_j, u_j\rangle-1)u_j.
\end{align*}
From this, the triangle inequality, and Parseval's identity, we get
\begin{align*}&\bigg\|\hat{u}_j-u_j-\sum_{k\neq j}\frac{\langle  u_j, Eu_k\rangle}{\lambda_j-\lambda_k} u_k\bigg\|\\
&\leq\sqrt{\sum_{k\neq j}\frac{\langle \hat u_j-u_j, Eu_k\rangle^2}{(\lambda_j-\lambda_k)^2}}+\sqrt{\sum_{k\neq j}\frac{(\lambda_j-\hat\lambda_j)^2}{(\lambda_j-\lambda_k)^2}\langle \hat u_j, u_k\rangle^2}+1-\langle\hat u_j, u_j\rangle.
\end{align*}
By Proposition \ref{PropCEVe}, and Condition \eqref{EqCCond}, the third term is bounded as follows:
\begin{align*}
1-\langle\hat u_j, u_j\rangle=\|\hat u_j-u_j\|^2/2&\leq Cx^2\sum_{k\neq j}\frac{\lambda_j\lambda_k}{(\lambda_j-\lambda_k)^2}\leq Cx^2\rr_j(\Sigma)\sqrt{\sum_{k\neq j}\frac{\lambda_j\lambda_k}{(\lambda_j-\lambda_k)^2}}.
\end{align*}
By Lemma \ref{LemBFE} and Lemma \ref{LemBasicBBEV}, the second term is bounded as follows:
\begin{align*}
\sqrt{\sum_{k\neq j}\frac{(\hat\lambda_j-\lambda_j)^2}{(\lambda_j-\lambda_k)^2}\langle \hat u_j, u_k\rangle^2}
& \leq Cx^2\sqrt{\sum_{k\neq j}\frac{\lambda_j^2}{(\lambda_j-\lambda_k)^2}\frac{\lambda_j\lambda_k}{(\lambda_j-\lambda_k)^2}}\leq Cx^2\rr_j(\Sigma)\sqrt{\sum_{k\neq j}\frac{\lambda_j\lambda_k}{(\lambda_j-\lambda_k)^2}}.
\end{align*}
It remains to bound the first term. By Parseval's identity, we have
\begin{align*}
\langle \hat u_j-u_j, Eu_k\rangle&=\sum_{l\geq 1}\langle \hat u_j-u_j,u_l\rangle\langle u_l, Eu_k\rangle\\
&=\sqrt{\lambda_j\lambda_k}\langle \hat u_j-u_j,u_j\rangle\bar\eta_{jk} +\sum_{l\neq j}\sqrt{\lambda_k\lambda_l}\langle \hat u_j,u_l\rangle\bar\eta_{kl}
\end{align*}
Applying the triangle inequality, Proposition \ref{PropCEVe}, Lemma \ref{LemBasicBBEV}, and Condition \eqref{EqCCond}, we get
\begin{align*}
|\langle \hat u_j-u_j, Eu_k\rangle|&\leq x\sqrt{\lambda_j\lambda_k}\|\hat u_j-u_j\|^2/2+Cx^2\sqrt{\lambda_j\lambda_k}\sum_{l\neq j }\frac{\lambda_l}{|\lambda_j-\lambda_l|}\\
&\leq Cx^3\sqrt{\lambda_j\lambda_k}\rr_j^2(\Sigma)+x^2\sqrt{\lambda_j\lambda_k}\rr_j(\Sigma)\leq Cx^2\sqrt{\lambda_j\lambda_k}\rr_j(\Sigma)
\end{align*}
for all $k\neq j$. Thus
\begin{equation*}
\sqrt{\sum_{k\neq j}\frac{\langle \hat u_j-u_j, Eu_k\rangle^2}{(\lambda_j-\lambda_k)^2}}\leq Cx^2\rr_j(\Sigma)\sqrt{\sum_{k\neq j}\frac{\lambda_j\lambda_k}{(\lambda_j-\lambda_k)^2}}.
\end{equation*}
This completes the proof.
\end{proof}

\subsection{Separation of eigenvalues in the case of multiplicities}
\begin{lemma}\label{LemEVConcMult}
Let $r\geq 1$. Let $r_0\geq 1$ be such that $\mu_{r_0}\leq \mu_r/2$. Let $x>0$ be such that \eqref{EqBoundRelCoeff} holds. Moreover, suppose that Condition \eqref{EqCCondMult} holds. Then we have
\[
|\hat \lambda_j-\mu_r|\leq 6x m_r\mu_r/2\qquad \forall j\in\mathcal{I}_r.
\]
In particular, we have the following separation of eigenvalues
\[
|\hat{\lambda}_j-\mu_r|\leq  |\mu_r-\mu_s|/2\qquad \forall j\in\mathcal{I}_r,\forall s\neq r.
\]
\end{lemma}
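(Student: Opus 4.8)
The plan is to mimic the proof of Lemma~\ref{LemBFE}, replacing the scalar fixed-point bounds from Proposition~\ref{PropEvRC} by their block analogues. First I would reduce the two-sided estimate $|\hat\lambda_j-\mu_r|\le 3xm_r\mu_r$ to two one-sided statements, exactly as in the simple case: an upper bound $\hat\lambda_j-\mu_r\le y$ and a lower bound $\hat\lambda_j-\mu_r\ge -y$ with $y=3xm_r\mu_r$. For the upper bound I would invoke the min-max characterisation together with Lemma~\ref{LemBLConc}, applied to $A=\sum_{s\ge r}\mu_s P_s$ (restricting attention to eigenvalues $\mu_s\le \mu_r$, i.e. $s\ge r$, so that $\hat\lambda_j-\mu_r\le\lambda_1(B)-\lambda_1(A)$ where $j$ is the smallest index in $\mathcal I_r$) and $B=P_{\ge r}\hat\Sigma P_{\ge r}$; Lemma~\ref{LemBLConc} then converts the assertion $\hat\lambda_j-\mu_r\le y$ into a bound $\lambda_1(C)\le 1$ for the operator $C=(\mu_r+y-A)^{-1/2}(B-A)(\mu_r+y-A)^{-1/2}$. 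I would then dominate $\lambda_1(C)$ by $\|C\|_2$ and estimate $\|C\|_2$ by inserting the spectral projectors and splitting according to whether the indices lie below $r_0$ or in the tail $\ge r_0$, bounding each block by the relevant Hilbert-Schmidt quantity in \eqref{EqBoundRelCoeff}. Using $\mu_s/(\mu_r+y-\mu_s)\le \mu_s/(\mu_r-\mu_s)$ for $s>r$, together with $\mu_r/y\le 1/(3x)$ and $\mu_{r_0}\le\mu_r/2$ (so the tail weights are comparable to $\operatorname{tr}_{\ge r_0}(\Sigma)/(\mu_r+y)$), the resulting sum is controlled by $x\big(\operatorname{const}+\rr_r(\Sigma)\big)$, which is $\le 1$ by \eqref{EqCCondMult}. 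The lower bound is symmetric, using the max-min characterisation and Lemma~\ref{LemLD} with $A=\sum_{s\le r}\mu_sP_s$ restricted to $\operatorname{span}(u_k:k\in\mathcal I_s,\,s\le r)$ and $B=P_{\le r}\hat\Sigma P_{\le r}$.

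The second claim is a routine consequence: for $j\in\mathcal I_r$ and $s\ne r$,
\[
|\hat\lambda_j-\mu_s|\ge|\mu_r-\mu_s|-|\hat\lambda_j-\mu_r|\ge|\mu_r-\mu_s|-3xm_r\mu_r,
\]
and since $m_r\mu_r/|\mu_r-\mu_s|\le m_r\mu_r/\min(\mu_{r-1}-\mu_r,\mu_r-\mu_{r+1})\le\rr_r(\Sigma)\le 1/(6x)$, the subtracted term is at most $|\mu_r-\mu_s|/2$, giving $|\hat\lambda_j-\mu_s|\ge|\mu_r-\mu_s|/2$.

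The main obstacle I anticipate is the bookkeeping in the Hilbert-Schmidt estimate of $\|C\|_2$: one must carefully track the weights $\mu_s/(\mu_r+y-\mu_s)$ across the three regimes (indices $s,t<r_0$; one index $<r_0$ and one in the tail; both in the tail), show that the denominators $\mu_r+y-\mu_s$ are bounded below by a constant multiple of $\mu_r$ uniformly over the tail $s\ge r_0$ (which is where $\mu_{r_0}\le\mu_r/2$ enters), and then assemble the pieces into a clean multiple of $x\,\rr_r(\Sigma)$ so that \eqref{EqCCondMult} closes the argument. The constant $6$ appearing in the statement (versus $3$ in Lemma~\ref{LemBFE}) reflects the slightly weaker condition \eqref{EqCCondMult} with $1/(6x)$ in place of $1/(3x)$, which is needed precisely to absorb the extra factor coming from the block structure and the tail splitting.
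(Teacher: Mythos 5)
Your proposal is correct and follows essentially the same route as the paper: the paper verifies the hypotheses of Proposition \ref{PropEvRC} with $y=3xm_r\mu_r$, bounds the Hilbert--Schmidt sum in \eqref{EqEvRD} by splitting at $r_0$, using \eqref{EqBoundRelCoeff} together with $\frac{1}{\mu_r+y-\mu_s}\leq\frac{2}{\mu_r+y}\leq\frac{2}{\mu_r+y-\mu_s}$ for $s\geq r_0$, and then closes with \eqref{EqCCondMult} exactly as you describe. The only cosmetic difference is that the paper cites Proposition \ref{PropEvRC} directly (which already packages the min-max plus Lemma \ref{LemBLConc} step) rather than re-deriving it inline, and the second claim is obtained by the same substitution $3xm_r\mu_r\leq 3x\rr_r(\Sigma)|\mu_r-\mu_s|\leq|\mu_r-\mu_s|/2$ that you give.
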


\begin{proof}
First, note that the second claim follows from the first one by inserting Condition \eqref{EqCCondMult}. For the first claim, it suffices to show that the assumptions in \eqref{EqEvRD} and \eqref{EqEvLD} in Corollary \ref{PropEvRC} are satisfied with $y=6xm_r\mu_r/2$. We only verify the assumption in \eqref{EqEvRD}, \eqref{EqEvLD} follows from the same line of arguments. First, from \eqref{EqCCondMult} and $y=6xm_r\mu_r/2$, we get
\[
2x\sum_{s\geq r}\frac{m_s\mu_s}{\mu_r+y-\mu_s}\leq 2/3+2x\sum_{s\geq r}\frac{m_s\mu_s}{\mu_r-\mu_s}\leq 1.
\]
Thus the assumption in \eqref{EqEvRD} follows if we can show that
\begin{equation}\label{EqShit}
\sum_{k\geq j}\sum_{l\geq j}\frac{\lambda_k}{\lambda_j+y-\lambda_k}\frac{\lambda_l}{\lambda_j+y-\lambda_l}\bar{\eta}_{kl}^2\leq
4x^2\bigg(\sum_{s\geq r}\frac{m_s\mu_s}{\mu_r+y-\mu_s}\bigg)^2
\end{equation}
for all $j\in\mathcal{I}_r$. First, note that
\[
\sum_{k\geq j}\sum_{l\geq j}\frac{\lambda_k}{\lambda_j+y-\lambda_k}\frac{\lambda_l}{\lambda_j+y-\lambda_l}\bar{\eta}_{kl}^2
\leq\sum_{s\geq r}\sum_{t\geq r}\frac{\|Q_sEQ_t\|_2^2}{(\mu_r+y-\mu_s)(\mu_r+y-\mu_t)}
\]
and the right hand side is equal to
\begin{align}
&\sum_{r\leq s< r_0}\sum_{r\leq t< r_0}\frac{\|Q_sEQ_t\|_2^2}{(\mu_r+y-\mu_s)(\mu_r+y-\mu_t)}\nonumber\\
+&\sum_{r\leq s< r_0}\sum_{t\geq r_0}\frac{2\|Q_sEQ_t\|_2^2}{(\mu_r+y-\mu_s)(\mu_r+y-\mu_t)}\nonumber\\
+&\sum_{s\geq r_0}\sum_{t\geq r_0}\frac{\|Q_sEQ_t\|_2^2}{(\mu_r+y-\mu_s)(\mu_r+y-\mu_t)}.\label{EqShit1}
\end{align}
By the property of $r_0$, we have for all $s\geq r_0$
\begin{equation}\label{EqDefR0y}
\frac{1}{\mu_r+y-\mu_s}\leq\frac{1}{\mu_r+y-\mu_{r_0}}\leq \frac{2}{\mu_r+y}\leq \frac{2}{\mu_r+y-\mu_s}.
\end{equation}
Using the second inequality in \eqref{EqDefR0y}, we can bound \eqref{EqShit1} by
\begin{align*}
&\sum_{r\leq s< r_0}\sum_{r\leq t< r_0}\frac{\|Q_sEQ_t\|_2^2}{(\mu_r+y-\mu_s)(\mu_r+y-\mu_t)}
\\&+\sum_{r\leq s< r_0}\frac{4\|Q_sEQ_{\geq r_0}\|_2^2}{(\mu_r+y-\mu_s)(\mu_r+y)}+\frac{4\|Q_{\geq r_0}EQ_{\geq r_0}\|_2^2}{(\mu_r+y)(\mu_r+y)}.
\end{align*}
Inserting \eqref{EqBoundRelCoeff}, this is bounded by
\begin{align*}
&x^2\sum_{r\leq s< r_0}\sum_{r\leq t< r_0}\frac{m_s\mu_s}{\mu_r+y-\mu_s}\frac{m_t\mu_t}{\mu_r+y-\mu_t}\\&+8x^2\sum_{r\leq s< r_0}\frac{m_s\mu_s}{\mu_r+y-\mu_s}\frac{\operatorname{tr}_{\geq r_0}(\Sigma)}{\mu_r+y}+4x^2\frac{\operatorname{tr}_{\geq r_0}(\Sigma)}{\mu_r+y}\frac{\operatorname{tr}_{\geq r_0}(\Sigma)}{\mu_r+y}
\end{align*}
which, by the last inequality in \eqref{EqDefR0y}, is bounded by
\begin{align*}
&4x^2\sum_{r\leq s< r_0}\sum_{r\leq t< r_0}\frac{m_s\mu_s}{\mu_r+y-\mu_s}\frac{m_t\mu_t}{\mu_r+y-\mu_t}\\
+&8x^2\sum_{r\leq s< r_0}\sum_{t\geq r_0}\frac{m_s\mu_s}{\mu_r+y-\mu_s}\frac{m_t\mu_t}{\mu_r+y-\mu_t}\\
+&4x^2\sum_{s\geq r_0}\sum_{t\geq r_0}\frac{m_s\mu_s}{\mu_r+y-\mu_s}\frac{m_t\mu_t}{\mu_r+y-\mu_t}
=4x^2\bigg(\sum_{s\geq r}\frac{m_s\mu_s}{\mu_r+y-\mu_s}\bigg)^2.
\end{align*}
From these inequalities \eqref{EqShit} follows.
\end{proof}

\subsection{Contraction argument for spectral projectors}

\begin{lemma}\label{lemProjCalc1}
Let $r\geq 1$. Let $r_0\geq 1$ be such that $\mu_{r_0}\leq \mu_r/2$. Let $x>0$ be such that \eqref{EqBoundRelCoeff} holds. Moreover, suppose that Condition \eqref{EqCCondMult} holds. Then we have
\[
\|\hat Q_rQ_s\|_2\leq \frac{2\|\hat Q_rE Q_s\|_2}{|\mu_r-\mu_s|}
\]
for all $s<r_0$, $s\neq r$ and
\[
\|\hat Q_rQ_{\geq r_0}\|_2\leq \frac{2\|\hat Q_rEQ_{\geq r_0}\|_2}{|\mu_r-\mu_{r_0}|}.
\]
\end{lemma}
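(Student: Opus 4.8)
The plan is to run the same bookkeeping as in Lemma~\ref{LemBasicBBEV}, but here everything is linear in the projectors, so no contraction/fixed‑point step is needed: the statement reduces to the elementary eigen‑equation together with the separation bound of Lemma~\ref{LemEVConcMult}. For an index set $\mathcal{J}$ write $P_{\mathcal{J}}=\sum_{k\in\mathcal{J}}u_k\otimes u_k$, so that $P_s=P_{\mathcal{I}_s}$ and $P_{\geq r_0}=P_{\mathcal{J}}$ with $\mathcal{J}=\bigcup_{s\geq r_0}\mathcal{I}_s$. First I would expand both operators in the eigenbases, using $(\hat u_j\otimes\hat u_j)(u_k\otimes u_k)=\langle\hat u_j,u_k\rangle\,\hat u_j\otimes u_k$, to get
\[
\hat P_rP_{\mathcal{J}}=\sum_{j\in\mathcal{I}_r}\sum_{k\in\mathcal{J}}\langle\hat u_j,u_k\rangle\,\hat u_j\otimes u_k,\qquad
\hat P_rEP_{\mathcal{J}}=\sum_{j\in\mathcal{I}_r}\sum_{k\in\mathcal{J}}\langle\hat u_j,Eu_k\rangle\,\hat u_j\otimes u_k.
\]
Since $\{\hat u_j:j\in\mathcal{I}_r\}$ and $\{u_k:k\in\mathcal{J}\}$ are orthonormal systems, the rank‑one operators $\{\hat u_j\otimes u_k\}$ are orthonormal in the Hilbert–Schmidt inner product, whence $\|\hat P_rP_{\mathcal{J}}\|_2^2=\sum_{j\in\mathcal{I}_r}\sum_{k\in\mathcal{J}}\langle\hat u_j,u_k\rangle^2$ and $\|\hat P_rEP_{\mathcal{J}}\|_2^2=\sum_{j\in\mathcal{I}_r}\sum_{k\in\mathcal{J}}\langle\hat u_j,Eu_k\rangle^2$.

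Next I would record the key identity: if $k\in\mathcal{I}_s$ (so $\Sigma u_k=\mu_s u_k$) and $j\in\mathcal{I}_r$, then self‑adjointness of $\hat\Sigma$ gives
\[
\langle\hat u_j,Eu_k\rangle=\langle\hat\Sigma\hat u_j,u_k\rangle-\langle\hat u_j,\Sigma u_k\rangle=(\hat\lambda_j-\mu_s)\langle\hat u_j,u_k\rangle,
\]
so that $\|\hat P_rEP_{\mathcal{J}}\|_2^2=\sum_{j\in\mathcal{I}_r}\sum_{s}\sum_{k\in\mathcal{I}_s\cap\mathcal{J}}(\hat\lambda_j-\mu_s)^2\langle\hat u_j,u_k\rangle^2$. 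The lemma is then a matter of bounding $|\hat\lambda_j-\mu_s|$ from below. For the first claim ($\mathcal{J}=\mathcal{I}_s$, $s\neq r$) the hypotheses here are exactly those of Lemma~\ref{LemEVConcMult}, which yields $|\hat\lambda_j-\mu_s|\geq|\mu_r-\mu_s|/2$ for all $j\in\mathcal{I}_r$; summing gives $\|\hat P_rEP_s\|_2^2\geq(|\mu_r-\mu_s|^2/4)\|\hat P_rP_s\|_2^2$, and taking square roots is the assertion.

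For the second claim ($\mathcal{J}=\bigcup_{s\geq r_0}\mathcal{I}_s$) I need a lower bound $\hat\lambda_j-\mu_s\geq|\mu_r-\mu_{r_0}|/2$ that is uniform over all $s\geq r_0$. This is obtained by applying Lemma~\ref{LemEVConcMult} with the index $s$ there taken equal to $r_0$ (legitimate since $\mu_{r_0}\leq\mu_r/2<\mu_r$), giving $|\hat\lambda_j-\mu_r|\leq(\mu_r-\mu_{r_0})/2$, hence $\hat\lambda_j\geq(\mu_r+\mu_{r_0})/2$, and therefore $\hat\lambda_j-\mu_s\geq\hat\lambda_j-\mu_{r_0}\geq(\mu_r-\mu_{r_0})/2>0$ for every $s\geq r_0$. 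Summing over $j\in\mathcal{I}_r$ and $k\in\mathcal{J}$ yields $\|\hat P_rEP_{\geq r_0}\|_2^2\geq(|\mu_r-\mu_{r_0}|^2/4)\|\hat P_rP_{\geq r_0}\|_2^2$, which finishes the proof. The computation is essentially routine; the only step requiring care is this last one, where the per‑block separation estimate of Lemma~\ref{LemEVConcMult} must be turned into a single bound valid for all $s\geq r_0$, and this is precisely where the normalisation $\mu_{r_0}\leq\mu_r/2$ is used.
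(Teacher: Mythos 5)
Your proof is correct and follows essentially the same route as the paper's: expand in the eigenbases, use the identity $\langle\hat u_j,Eu_k\rangle=(\hat\lambda_j-\mu_s)\langle\hat u_j,u_k\rangle$, and invoke the eigenvalue separation of Lemma~\ref{LemEVConcMult}. One small imprecision: the second part of Lemma~\ref{LemEVConcMult} states $|\hat\lambda_j-\mu_r|\leq|\mu_r-\mu_s|/2$, from which $|\hat\lambda_j-\mu_s|\geq|\mu_r-\mu_s|/2$ follows by a one-line triangle inequality rather than directly; and for the second claim the paper first establishes the per-block bound for every $s\neq r$ and then sums over $s\geq r_0$ using $|\mu_r-\mu_s|\geq|\mu_r-\mu_{r_0}|$, whereas you extract a uniform lower bound $\hat\lambda_j-\mu_s\geq(\mu_r-\mu_{r_0})/2$ first, an equivalent reorganization of the same computation.
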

\begin{proof} For all $s\neq r$, we have
\begin{align}
4\|\hat Q_rE Q_s\|_2^2&=\sum_{j\in\mathcal{I}_r}\sum_{k\in\mathcal{I}_s}4(\hat\lambda_j-\mu_s)^2\langle \hat u_j,u_k\rangle^2\nonumber\\
&=\sum_{j\in\mathcal{I}_r}\sum_{k\in\mathcal{I}_s}4(\hat \lambda_j-\mu_r+\mu_r-\mu_s)^2\langle \hat u_j,u_k\rangle^2\nonumber\\
&\geq\sum_{j\in\mathcal{I}_r}\sum_{k\in\mathcal{I}_s} (\mu_r-\mu_s)^2\langle \hat u_j,u_k\rangle^2=(\mu_r-\mu_s)^2\|\hat Q_rQ_s\|_2^2,\label{EqLem6Proof}
\end{align}
where we used the second part of Lemma \ref{LemEVConcMult} in the inequality. Taking square roots on both sides gives the first claim.
Summing the above inequality over $s\geq r_0$, we get
\begin{align*}
\|\hat Q_rQ_{\geq r_0}\|_2^2=\sum_{s\geq r_0}\|\hat Q_rQ_s\|_2^2&\leq \sum_{s\geq r_0}\frac{4\|\hat Q_rE Q_s\|_2^2}{(\mu_r-\mu_s)^2}\leq \sum_{s\geq r_0}\frac{4\|\hat Q_rE Q_s\|_2^2}{(\mu_r-\mu_{r_0})^2}=\frac{4\|\hat Q_rEQ_{\geq r_0}\|_2^2}{(\mu_r-\mu_{r_0})^2},
\end{align*}
which gives the second claim.
\end{proof}

Our next result is the contraction property for spectral projectors.

\begin{lemma}\label{LemBasIneq} Let $r\geq 1$. Let $r_0\geq 1$ be such that $\mu_{r_0}\leq \mu_r/2$. Let $x>0$ be such that \eqref{EqBoundRelCoeff} holds. Moreover, suppose that Condition \eqref{EqCCondMult} holds. Then we have
\[
\|\hat Q_rQ_s\|_2\leq Cx \frac{\sqrt{m_r\mu_rm_s\mu_s}}{|\mu_r-\mu_s|}
\]
for all $s<r_0$, $s\neq r$ and
\[
\|\hat Q_rQ_{\geq r_0}\|_2\leq Cx\frac{\sqrt{m_r\mu_r\operatorname{tr}_{\geq r_0}(\Sigma)}}{|\mu_r-\mu_{r_0}|}\leq Cx\sqrt{\sum_{s\geq r_0}\frac{m_r\mu_rm_s\mu_s}{(\mu_r-\mu_s)^2}}.
\]
\end{lemma}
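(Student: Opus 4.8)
The plan is to transport the contraction argument of Lemma~\ref{LemBasicBBEV} to the level of spectral blocks. Lemma~\ref{lemProjCalc1} already reduces $\|\hat P_rP_s\|_2$ (for $s<r_0$, $s\neq r$) to $\|\hat P_rEP_s\|_2$, and $\|\hat P_rP_{\geq r_0}\|_2$ to $\|\hat P_rEP_{\geq r_0}\|_2$, so it remains to bound these two quantities self-consistently. Note first that $\mu_{r_0}\leq\mu_r/2<\mu_r$ forces $r_0>r$, so every block index occurring below is $<r_0$. Introduce the aggregated quantity
\[
S=\sum_{\substack{t<r_0\\ t\neq r}}\|\hat P_rP_t\|_2\,\sqrt{m_t\mu_t}\;+\;\|\hat P_rP_{\geq r_0}\|_2\,\sqrt{\operatorname{tr}_{\geq r_0}(\Sigma)},
\]
the tail being kept as a single block since \eqref{EqBoundRelCoeff} controls $\|P_sEP_{\geq r_0}\|_2$ and $\|P_{\geq r_0}EP_{\geq r_0}\|_2$ but not the individual $\|P_tEP_s\|_2$ with $t\geq r_0$.

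\textbf{One-block expansion.} Insert $I=P_r+\sum_{t<r_0,\,t\neq r}P_t+P_{\geq r_0}$ in $\hat P_rEP_s$ and split accordingly. Using $P_t=P_t^2$ and $P_{\geq r_0}=P_{\geq r_0}^2$ to factor each summand, the submultiplicativity $\|AB\|_2\leq\min(\|A\|_\infty\|B\|_2,\|A\|_2\|B\|_\infty)\leq\|A\|_2\|B\|_2$, the identity $\|\hat P_r\|_\infty=1$, and the three bounds in \eqref{EqBoundRelCoeff} (applicable since $r,s,t<r_0$) applied to $\|P_rEP_s\|_2$, $\|P_tEP_s\|_2$ and $\|P_{\geq r_0}EP_s\|_2=\|P_sEP_{\geq r_0}\|_2$, I would obtain
\[
\|\hat P_rEP_s\|_2\leq x\sqrt{m_s\mu_s}\,\bigl(\sqrt{m_r\mu_r}+S\bigr)\qquad(s<r_0,\ s\neq r),
\]
and the same expansion applied to $\hat P_rEP_{\geq r_0}$, using now also the third bound in \eqref{EqBoundRelCoeff} for $\|P_{\geq r_0}EP_{\geq r_0}\|_2$, gives
\[
\|\hat P_rEP_{\geq r_0}\|_2\leq x\sqrt{\operatorname{tr}_{\geq r_0}(\Sigma)}\,\bigl(\sqrt{m_r\mu_r}+S\bigr).
\]

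\textbf{Closing the contraction.} Next I feed these two estimates into Lemma~\ref{lemProjCalc1}, multiply the $s$-th inequality by $\sqrt{m_s\mu_s}$ and the tail inequality by $\sqrt{\operatorname{tr}_{\geq r_0}(\Sigma)}$, and sum over $s<r_0$, $s\neq r$, to arrive at
\[
S\leq 2xK\bigl(\sqrt{m_r\mu_r}+S\bigr),\qquad K:=\sum_{\substack{t<r_0\\ t\neq r}}\frac{m_t\mu_t}{|\mu_r-\mu_t|}+\frac{\operatorname{tr}_{\geq r_0}(\Sigma)}{\mu_r-\mu_{r_0}}.
\]
Here $\mu_{r_0}\leq\mu_r/2$ enters twice: $\mu_r-\mu_{r_0}\geq\mu_r/2$ and $\mu_s\leq\mu_{r_0}$ for $s\geq r_0$ give $\frac{\operatorname{tr}_{\geq r_0}(\Sigma)}{\mu_r-\mu_{r_0}}\leq\frac{2}{\mu_r}\sum_{s\geq r_0}m_s\mu_s\leq 2\sum_{s\geq r_0}\frac{m_s\mu_s}{\mu_r-\mu_s}$, hence $K\leq 2\rr_r(\Sigma)$. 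Condition~\eqref{EqCCondMult} then gives $2xK\leq 4x\rr_r(\Sigma)\leq 2/3<1$, so $S\leq\frac{2xK}{1-2xK}\sqrt{m_r\mu_r}\leq 2\sqrt{m_r\mu_r}$ and $\sqrt{m_r\mu_r}+S\leq 3\sqrt{m_r\mu_r}$. Plugging this back into the one-block estimates via Lemma~\ref{lemProjCalc1} yields $\|\hat P_rP_s\|_2\leq 6x\sqrt{m_r\mu_rm_s\mu_s}/|\mu_r-\mu_s|$ and $\|\hat P_rP_{\geq r_0}\|_2\leq 6x\sqrt{m_r\mu_r\operatorname{tr}_{\geq r_0}(\Sigma)}/|\mu_r-\mu_{r_0}|$, so $C=6$ works. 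The last inequality of the statement follows from $\mu_r-\mu_s\geq\mu_r-\mu_{r_0}\geq\mu_r/2$ for $s\geq r_0$, which gives $\frac{m_r\mu_r\operatorname{tr}_{\geq r_0}(\Sigma)}{(\mu_r-\mu_{r_0})^2}\leq\frac{4m_r\operatorname{tr}_{\geq r_0}(\Sigma)}{\mu_r}\leq 4\sum_{s\geq r_0}\frac{m_r\mu_rm_s\mu_s}{(\mu_r-\mu_s)^2}$, the constant being absorbed into $C$.

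\textbf{Main obstacle.} The delicate point is structural rather than computational: the tail block cannot be resolved into individual eigenspaces, since \eqref{EqBoundRelCoeff} only bounds the grouped quantities $\|P_sEP_{\geq r_0}\|_2$ and $\|P_{\geq r_0}EP_{\geq r_0}\|_2$; keeping it aggregated — and correspondingly using the single gap $|\mu_r-\mu_{r_0}|$, as already built into Lemma~\ref{lemProjCalc1} — is what makes the single summand $\|\hat P_rP_{\geq r_0}\|_2\sqrt{\operatorname{tr}_{\geq r_0}(\Sigma)}$ the correct entry of $S$ and lets the recursion close. The only other care needed is constant bookkeeping, so that the factor $1/(6x)$ in \eqref{EqCCondMult} turns exactly into $2xK\leq 2/3$.
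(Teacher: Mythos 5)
Your proof is correct and follows essentially the same route as the paper: define the aggregated quantity (the paper writes it as $\sum_{s\leq r_0,\,s\neq r}\alpha_s$, you call it $S$), expand $\hat P_rEP_s$ via $I=\sum_tP_t$ with the tail kept as one block, feed in Lemma~\ref{lemProjCalc1} and the relative coefficient bounds \eqref{EqBoundRelCoeff}, and close the contraction using \eqref{EqCCondMult} and the tail comparison \eqref{EqDefR0}. The only cosmetic difference is that you solve the scalar inequality $S\leq 2xK(\sqrt{m_r\mu_r}+S)$ directly, whereas the paper rearranges it as $3S\leq 2\sqrt{m_r\mu_r}+2S$; both give $S\leq 2\sqrt{m_r\mu_r}$ and the constant $C=6$.
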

\begin{proof}
By the identity $I=\sum_{t\geq 1}Q_t$, the triangle inequality, and the fact that the Hilbert-Schmidt norm is sub-multiplicative, we have \begin{align*}
\|\hat Q_rE Q_s\|_2
&\leq \sum_{t<r_0}\|\hat Q_rQ_tE Q_s\|_2+\|\hat Q_rQ_{\geq r_0}E Q_s\|_2\\
&\leq \|Q_rEQ_s\|_2+\sum_{\substack{t<r_0:\\t\neq r}}\|\hat Q_rQ_t\|_2\|Q_tE Q_s\|_2+\|\hat Q_rQ_{\geq r_0}\|_2\|Q_{\geq r_0}E Q_s\|_2
\end{align*}
for all $s<r_0$, $s\neq r$.
From \eqref{EqBoundRelCoeff}, we get
\begin{align*}
\|\hat Q_rE Q_s\|_2\leq x\sqrt{m_r\mu_r m_s\mu_s}+\sum_{\substack{t< r_0:\\t\neq r}}x\sqrt{ m_t\mu_tm_s\mu_s}\|\hat Q_rQ_t\|_2+x\sqrt{\operatorname{tr}_{\geq r_0}(\Sigma)m_s\mu_s}\|\hat Q_rQ_{\geq r_0}\|_2
\end{align*}
for all $s<r_0$, $s\neq r$. Similarly, we have
\begin{align*}
\|\hat Q_rEQ_{\geq r_0}\|_2\leq x\sqrt{m_r\mu_r \operatorname{tr}_{\geq r_0}(\Sigma)}+\sum_{\substack{t< r_0:\\t\neq r}}x\sqrt{m_t\mu_t\operatorname{tr}_{\geq r_0}(\Sigma)}\|\hat Q_rQ_t\|_2+x\operatorname{tr}_{\geq r_0}(\Sigma)\|\hat Q_rQ_{\geq r_0}\|_2.
\end{align*}
Inserting Lemma \ref{lemProjCalc1}, we get
\begin{align*}
\|\hat Q_rQ_s\|_2\leq 2x\frac{\sqrt{m_s\mu_s}}{|\mu_r-\mu_s|}\Big(\sqrt{m_r\mu_r}+\sum_{\substack{t<r_0:\\t\neq r}}\sqrt{m_t\mu_t}\|\hat Q_rQ_t\|_2+\sqrt{\operatorname{tr}_{\geq r_0}(\Sigma)}\|\hat Q_rQ_{\geq r_0}\|_2\Big)
\end{align*}
for all $s<r_0$, $s\neq r$ and
\begin{align*}
\|\hat Q_rQ_{\geq r_0}\|_2\leq 2x\frac{\sqrt{\operatorname{tr}_{\geq r_0}(\Sigma)}}{|\mu_r-\mu_{r_0}|}\Big(\sqrt{m_r\mu_r}+\sum_{\substack{t<r_0:\\t\neq r}}\sqrt{m_t\mu_t}\|\hat Q_rQ_t\|_2+\sqrt{\operatorname{tr}_{\geq r_0}(\Sigma)}\|\hat Q_rQ_{\geq r_0}\|_2\Big).
\end{align*}
Setting
\[
\alpha_t= \sqrt{m_t\mu_t}\|\hat Q_rQ_t\|_2\quad\forall t<r_0,t\neq r,\quad\alpha_{r_0}=\sqrt{\operatorname{tr}_{\geq r_0}(\Sigma)}\|\hat Q_rQ_{\geq r_0}\|_2,
\]
these inequalities can be written as
\begin{equation}\label{EqRecEq1}
\alpha_s\leq2x\frac{ m_s\mu_s}{|\mu_r-\mu_s|}(\sqrt{m_r\mu_r}+ \sum_{\substack{t\leq  r_0:\\t\neq r}}\alpha_t)
\end{equation}
for all $s<r_0$, $s\neq r$ and
\begin{equation}\label{EqRecEq2}
\alpha_{r_0}\leq2 x\frac{\operatorname{tr}_{\geq r_0}(\Sigma)}{|\mu_r-\mu_{r_0}|}(\sqrt{m_r\mu_r}+ \sum_{\substack{t\leq r_0:\\t\neq r}}\alpha_t).
\end{equation}
By the properties of $r_0$, we have for all  $s\geq r_0$
\begin{equation}\label{EqDefR0}
\frac{1}{\mu_r-\mu_s}\leq\frac{1}{\mu_r-\mu_{r_0}}\leq \frac{2}{\mu_r}\leq \frac{2}{\mu_r-\mu_s}.
\end{equation}
In particular, combining \eqref{EqDefR0} with \eqref{EqCCondMult}, we have
\[
2x\sum_{\substack{s<r_0:\\s\neq r}}\frac{m_s\mu_s}{|\mu_r-\mu_s|}+2x\frac{ \operatorname{tr}_{\geq r_0}(\Sigma)}{|\mu_r-\mu_{r_0}|}\leq 4x\sum_{s\neq r}\frac{ m_s\mu_s}{|\mu_r-\mu_s|}\leq 2/3.
\]
Summing the inequalities in \eqref{EqRecEq1} and \eqref{EqRecEq2}, again arrive at the contraction inequality
\begin{equation}
3\sum_{\substack{s\leq r_0:\\s\neq r}}\alpha_s\leq 2\sqrt{m_r\mu_r}+2\sum_{\substack{t\leq r_0:\\t\neq r}}\alpha_t,
\end{equation}
and thus
\begin{equation}\label{EqSumIneq}
\sum_{\substack{s\leq r_0:\\s\neq r}}\alpha_s\leq 2\sqrt{m_r\mu_r}.
\end{equation}
Plugging \eqref{EqSumIneq} into \eqref{EqRecEq1}, we conclude that
\begin{equation*}
\sqrt{m_s\mu_s}\|\hat Q_rQ_s\|_2\leq 6x\frac{ m_s\mu_s\sqrt{m_r\mu_r}}{|\mu_r-\mu_s|}
\end{equation*}
for all $s<r_0$, $s\neq r$, which gives the first claim. Similarly, plugging \eqref{EqSumIneq} into \eqref{EqRecEq2}, we get
\begin{align*}
\sqrt{\operatorname{tr}_{\geq r_0}(\Sigma)}\|\hat Q_rQ_{\geq r_0}\|_2&\leq 6x\frac{\operatorname{tr}_{\geq r_0}(\Sigma)\sqrt{m_r\mu_r}}{|\mu_r-\mu_{r_0}|}
\end{align*}
which gives the first inequality of the second claim. The second inequality follows from invoking \eqref{EqDefR0}. This completes the proof.
\end{proof}

\begin{proposition}\label{PropProjConcMult}
Let $r\geq 1$ and $r_0\geq 1$ be such that $\mu_{r_0}\leq \mu_r/2$. Let $x>0$ be such that \eqref{EqBoundRelCoeff} holds. Moreover, suppose that Condition \eqref{EqCCondMult} holds. Then we have
\begin{equation*}
\|\hat{Q}_r-Q_r\|_2\leq Cx\sqrt{\sum_{s\neq r}\frac{m_r\mu_rm_s\mu_s}{(\mu_r-\mu_s)^2}}\leq Cx\rr_r(\Sigma).
\end{equation*}
\end{proposition}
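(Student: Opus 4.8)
The plan is to mimic the proof of Proposition~\ref{PropCEVe}, with the eigenvector $\hat u_j$ replaced by the spectral projector $\hat P_r$ and Lemma~\ref{LemBasicBBEV} replaced by Lemma~\ref{LemBasIneq}. The first step is a Parseval-type identity expressing $\|\hat P_r-P_r\|_2^2$ through the ``off-diagonal'' blocks $\hat P_r P_s$, $s\neq r$. Since $\hat P_r-P_r$ is self-adjoint and $\hat P_r$, $P_r$ are orthogonal projections of rank $m_r$, one has
\[
\|\hat P_r-P_r\|_2^2=\operatorname{tr}\big((\hat P_r-P_r)^2\big)=2m_r-2\operatorname{tr}(\hat P_r P_r)=2m_r-2\|\hat P_rP_r\|_2^2,
\]
using $\operatorname{tr}(\hat P_rP_r)=\operatorname{tr}(P_r\hat P_r\hat P_rP_r)=\|\hat P_rP_r\|_2^2$. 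On the other hand, inserting $I=\sum_{s\geq 1}P_s$ and noting that the blocks $\hat P_rP_s$ are mutually orthogonal in the Hilbert--Schmidt inner product (because $\operatorname{tr}((\hat P_rP_s)^*(\hat P_rP_t))=\operatorname{tr}(P_tP_s\hat P_r)=0$ for $s\neq t$), we get $m_r=\|\hat P_r\|_2^2=\sum_{s\geq 1}\|\hat P_rP_s\|_2^2$, hence $m_r-\|\hat P_rP_r\|_2^2=\sum_{s\neq r}\|\hat P_rP_s\|_2^2$. Combining the two displays gives the key identity
\[
\|\hat P_r-P_r\|_2^2=2\sum_{s\neq r}\|\hat P_rP_s\|_2^2 .
\]

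The second step is to bound the right-hand side using Lemma~\ref{LemBasIneq}. Splitting the sum into the indices $s<r_0$, $s\neq r$, and the tail $s\geq r_0$, and using $\sum_{s\geq r_0}\|\hat P_rP_s\|_2^2=\|\hat P_rP_{\geq r_0}\|_2^2$, Lemma~\ref{LemBasIneq} yields
\[
\sum_{s\neq r}\|\hat P_rP_s\|_2^2\leq Cx^2\sum_{\substack{s<r_0\\ s\neq r}}\frac{m_r\mu_rm_s\mu_s}{(\mu_r-\mu_s)^2}+Cx^2\sum_{s\geq r_0}\frac{m_r\mu_rm_s\mu_s}{(\mu_r-\mu_s)^2}=Cx^2\sum_{s\neq r}\frac{m_r\mu_rm_s\mu_s}{(\mu_r-\mu_s)^2}.
\]
Taking square roots and combining with the first step proves the first inequality of the proposition.

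For the second inequality, note that $|\mu_r-\mu_s|\geq g:=\min(\mu_{r-1}-\mu_r,\mu_r-\mu_{r+1})$ for every $s\neq r$, since the sequence $(\mu_s)$ is strictly decreasing (for $s<r$, $\mu_s-\mu_r\geq\mu_{r-1}-\mu_r$; for $s>r$, $\mu_r-\mu_s\geq\mu_r-\mu_{r+1}$). Therefore
\[
\sum_{s\neq r}\frac{m_r\mu_rm_s\mu_s}{(\mu_r-\mu_s)^2}\leq\frac{m_r\mu_r}{g}\sum_{s\neq r}\frac{m_s\mu_s}{|\mu_r-\mu_s|}\leq\rr_r(\Sigma)^2,
\]
the last step using that the two nonnegative factors $m_r\mu_r/g$ and $\sum_{s\neq r}m_s\mu_s/|\mu_r-\mu_s|$ are each bounded by their sum, which is exactly $\rr_r(\Sigma)$. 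The only slightly delicate point is the Hilbert--Schmidt bookkeeping in the first step; once the identity $\|\hat P_r-P_r\|_2^2=2\sum_{s\neq r}\|\hat P_rP_s\|_2^2$ is in place, the rest is a direct invocation of Lemma~\ref{LemBasIneq}, entirely parallel to the simple-eigenvalue case in Proposition~\ref{PropCEVe}.
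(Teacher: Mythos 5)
Your proof is correct and follows essentially the same route as the paper: the paper derives the key identity as $\|\hat P_r-P_r\|_2^2=2\langle\hat P_r,I-P_r\rangle=2\sum_{s<r_0,\,s\neq r}\|\hat P_rP_s\|_2^2+2\|\hat P_rP_{\geq r_0}\|_2^2$ directly from idempotence and self-adjointness, which is just a slight repackaging of your $2m_r-2\|\hat P_rP_r\|_2^2$ computation, and then applies Lemma~\ref{LemBasIneq} and the two-term bound via $\rr_r(\Sigma)$ exactly as you do.
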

\begin{proof}
Using that orthogonal projectors are idempotent and self-adjoint, we have
\begin{align*}
\|\hat{Q}_r-Q_r\|_2^2=2\langle\hat Q_r,I-Q_r\rangle
& =\sum_{\substack{s< r_0:\\s\neq r}}2\langle\hat Q_r,Q_s\rangle+2\langle\hat Q_r,Q_{\geq r_0}\rangle\\
&=\sum_{\substack{s< r_0:\\s\neq r}}2\|\hat{Q}_rQ_s\|_2^2+2\|\hat{Q}_rQ_{\geq r_0}\|_2^2.
\end{align*}
Inserting Lemma \ref{LemBasIneq} gives the first inequality. The second inequality follows from
\[
\sum_{s\neq r}\frac{m_r\mu_rm_s\mu_s}{(\mu_r-\mu_s)^2}\leq \rr_r(\Sigma)\sum_{s\neq r}\frac{m_s\mu_s}{|\mu_r-\mu_s|}\leq   \rr^2_r(\Sigma),
\]
where we applied Condition \eqref{EqCCondMult} twice.
\end{proof}

\subsection{Proof of Theorems \ref{ThmExpEvMult} and \ref{ThmProjExp}}
\begin{proof}[Proof of Theorem \ref{ThmExpEvMult}]
By the Hoffman-Wielandt inequality with the $1$-norm, we have
\begin{align*}
\sum_{k=1}^{m_r}|\lambda_k(\hat Q_r(\hat \Sigma-\mu_rI)\hat Q_r)-\lambda_k(Q_rEQ_r)|\leq \|\hat Q_r(\hat \Sigma-\mu_rI)\hat Q_r-Q_rEQ_r\|_1.
\end{align*}
Indeed, we can apply the infinite-dimensional version, see e.g. \cite[Theorem 5.1]{M64}, or the finite-dimensional version in \cite[Equation (1.64)]{T12} or \cite{B07}, since both $Q_r$ and $\hat Q_r$ have an $m_r$-dimensional range. Thus it suffices to show that
\begin{equation*}
\|\hat Q_r(\hat \Sigma-\mu_rI)\hat Q_r-Q_rEQ_r\|_1\leq Cx^2m_r\mu_r\rr_r(\Sigma).
\end{equation*}
We decompose
\begin{align}
\hat Q_r(\hat \Sigma-\mu_rI)\hat Q_r-Q_rEQ_r=(\hat{Q}_rE\hat{Q}_r-Q_rEQ_r)+\hat{Q}_r( \Sigma-\lambda_r I)\hat{Q}_r\label{EqNuclearDec}.
\end{align}
Thus it suffices to show that  the trace norm of the right-hand side is bounded by $Cx^2m_r\mu_r\rr_r(\Sigma)$.
Using the triangle inequality and \eqref{EqDefR0}, the trace norm of the last term on the right-hand side of \eqref{EqNuclearDec} can be bounded as follows:
\begin{align*}
\|\hat{Q}_r(\Sigma-\lambda_r I)\hat{Q}_r\|_1
&\leq \sum_{s\geq 1}\|\hat{Q}_rQ_s(\Sigma-\mu_rI)\hat{Q}_r\|_1\\
&=\sum_{s\neq r}\|(\mu_s-\mu_r)\hat{Q}_rQ_s\hat{Q}_r\|_1\\
&=\sum_{s\neq r}|\mu_r-\mu_s|\|\hat{Q}_rQ_s\|_2^2\\
&\leq \sum_{\substack{s<r_0:\\s\neq r}}|\mu_r-\mu_s|\|\hat{Q}_rQ_s\|_2^2+2|\mu_r-\mu_{r_0}|\|\hat{Q}_rQ_{\geq r_0}\|_2^2.
\end{align*}
Using Lemma \ref{LemBasIneq} and \eqref{EqDefR0}, we conclude that
\begin{align*}
\|\hat{Q}_r(\Sigma-\lambda_r I)\hat{Q}_r\|_1&\leq Cx^2\bigg(\sum_{\substack{s<r_0:\\s\neq r}}\frac{m_r\mu_rm_s\mu_s}{|\mu_r-\mu_s|}+\frac{\operatorname{tr}_{\geq r_0}(\Sigma)}{\mu_r-\mu_{r_0}}\bigg)\\
&\leq Cx^2\sum_{s\neq r}\frac{m_r\mu_rm_s\mu_s}{|\mu_r-\mu_s|}\leq Cx^2m_r\mu_r\rr_r(\Sigma).
\end{align*}
Similarly, the trace norm of the first term on the right-hand side of \eqref{EqNuclearDec} can be bounded as follows:
\begin{align}
\|\hat{Q}_rE\hat{Q}_r-Q_rEQ_r\|_1
&\leq \|\hat{Q}_r\sum_{s\neq r}Q_sE\sum_{t\neq r}Q_t\hat{Q}_r\|_1+2\|\hat{Q}_r\sum_{s\neq r}Q_sEQ_r\hat{Q}_r\|_1\nonumber\\
&+\|\hat{Q}_rQ_rEQ_r\hat{Q}_r-Q_rEQ_r\|_1\label{EqDecSecTerm}.
\end{align}
We start with the second term on the right-hand side of \eqref{EqDecSecTerm}. From the triangle inequality and the fact that the Hilbert-Schmidt norm is sub-multiplicative, we have
\begin{align*}
&\|\hat{Q}_r\sum_{s\neq r}Q_sEQ_r\hat{Q}_r\|_1\\
&\leq \sum_{\substack{s<r_0:\\s\neq r}}\|\hat{Q}_rQ_sEQ_r\hat{Q}_r\|_1+\|\hat{Q}_rQ_{\geq r_0}EQ_r\hat{Q}_r\|_1\\
&\leq \sum_{\substack{s<r_0:\\s\neq r}}\|\hat{Q}_rQ_s\|_2\|Q_sEQ_r\|_2+\|\hat{Q}_rQ_{\geq r_0}\|_2\|Q_{\geq r_0}EQ_r\|_2.
\end{align*}
Applying Lemma \ref{LemBasIneq}, \eqref{EqBoundRelCoeff}, and \eqref{EqDefR0}, we get
\begin{align*}
\|\hat{Q}_r\sum_{s\neq r}Q_sEQ_r\hat{Q}_r\|_1&\leq Cx^2\bigg(\sum_{\substack{s<r_0:\\s\neq r}}\frac{m_r\mu_rm_s\mu_s}{|\mu_r-\mu_s|}+\frac{m_r\mu_r\operatorname{tr}_{\geq r_0}(C)}{\mu_r-\mu_{r_0}}\bigg)\\
&\leq Cx^2\sum_{s\neq r}\frac{m_r\mu_rm_s\mu_s}{|\mu_r-\mu_s|}\leq Cx^2m_r\mu_r\rr_r(\Sigma).
\end{align*}
Similarly, the first term on the right-hand side of \eqref{EqDecSecTerm} can be bounded as follows:
\begin{align*}
\|\hat{Q}_r\sum_{s\neq r}Q_sE\sum_{t\neq r}Q_t\hat{Q}_r\|_1
&\leq \sum_{\substack{s<r_0:\\s\neq r}}\sum_{\substack{t<r_0:\\t\neq r}}\|\hat{Q}_rQ_sEQ_t\hat{Q}_r\|_1\\
&+2\sum_{\substack{s<r_0:\\s\neq r}}\|\hat{Q}_rQ_sEQ_{\geq r_0}\hat{Q}_r\|_1+\|\hat{Q}_rQ_{\geq r_0}EQ_{\geq r_0}\hat{Q}_r\|_1\\
&\leq \sum_{\substack{s<r_0:\\s\neq r}}\sum_{\substack{t<r_0:\\t\neq r}}\|\hat{Q}_rQ_s\|_2\|Q_sEQ_t\|_2\|Q_t\hat{Q}_r\|_2\\
&+2\sum_{\substack{s<r_0:\\s\neq r}}\|\hat{Q}_rQ_s\|_2\|Q_sEQ_{\geq r_0}\|_2\|Q_{\geq r_0}\hat{Q}_r\|_2\\
&+\|\hat{Q}_rQ_{\geq r_0}\|_2\|Q_{\geq r_0}EQ_{\geq r_0}\|_2\|Q_{\geq r_0}\hat{Q}_r\|_2.
\end{align*}
Applying Lemma \ref{LemBasIneq}, \eqref{EqBoundRelCoeff}, \eqref{EqDefR0}, and Condition \eqref{EqCCondMult}, we conclude that
\begin{align*}
\|\hat{Q}_r\sum_{s\neq r}Q_sE\sum_{t\neq r}Q_t\hat{Q}_r\|_1&\leq Cx^3m_r\mu_r\bigg(\sum_{\substack{s<r_0:\\s\neq r}}\frac{m_s\mu_s}{|\mu_r-\mu_s|}+\frac{\operatorname{tr}_{\geq r_0}(\Sigma)}{\mu_r-\mu_{r_0}}\bigg)^2\\
&\leq Cx^3m_r\mu_r\rr_r^2(\Sigma)\leq Cx^2m_r\mu_r\rr_r(\Sigma).
\end{align*}
Finally, using Proposition \ref{PropProjConcMult} and \eqref{EqBoundRelCoeff}, the last term on the right-hand side of \eqref{EqDecSecTerm} can be bounded as follows:
\begin{align*}
\|\hat{Q}_rQ_rEQ_r\hat{Q}_r-Q_rEQ_r\|_1
&=\|(\hat{Q}_r-Q_r)Q_rEQ_r\hat{Q}_r+Q_rEQ_r(\hat{Q}_r-Q_r)\|_1\\
&\leq  2\|\hat{Q}_r-Q_r\|_2\|Q_rEQ_r\|_2 \leq Cx^2 m_r\mu_r\rr_r(\Sigma).
\end{align*}
Hence, all summands on the right-hand side of \eqref{EqDecSecTerm} are bounded by $Cx^2\rr_r(\Sigma) m_r\mu_r$, and the claim follows.
\end{proof}

\begin{proof}[Proof of Theorem \ref{ThmProjExp}] Expanding the right-hand side of the identity $Q_r(\hat Q_r-Q_r)Q_r=(Q_r-I+I)(\hat Q_r-Q_r)(Q_r-I+I)$, we get
\begin{equation}\label{EqBF2}
\hat Q_r-Q_r=\hat Q_r(I-Q_r)+(I-Q_r)\hat Q_r+Q_r(\hat Q_r-Q_r)Q_r-(I-Q_r)\hat Q_r(I-Q_r).
\end{equation}
Using that $R_r(\Sigma-\mu_rI)=(\Sigma-\mu_rI)R_r=I-Q_r$, we have
\begin{align}
\hat{Q}_r(I-Q_r)
&=\hat{Q}_r(\hat \Sigma-\mu_rI+ \Sigma-\hat\Sigma)R_r\nonumber\\
&=\hat Q_r(\hat\Sigma-\mu_rI)R_r-Q_rER_r-(\hat{Q}_r-Q_r)
ER_r\label{EqBexp1}
\end{align}
and
\begin{align}
(I-Q_r)\hat{Q}_r
&=R_r(\hat \Sigma-\mu_rI+ \Sigma-\hat\Sigma)\hat{Q}_r\nonumber\\
&=R_r(\hat\Sigma-\mu_rI)\hat{Q}_r-R_rEQ_r-R_rE(\hat{Q}_r-Q_r)\label{EqBexp2}.
\end{align}
Hence, inserting \eqref{EqBexp1} and \eqref{EqBexp2} into \eqref{EqBF2} and using the triangle inequality, we get
\begin{align}
&\|\hat Q_r-Q_r+R_rEQ_r+Q_rER_r\|_2\nonumber\\
&\leq 2\|\hat Q_r(\hat\Sigma-\mu_rI)R_r\|_2+2\|(\hat{Q}_r-Q_r)ER_r\|_2\nonumber\\
&+\|Q_r(\hat Q_r-Q_r)Q_r\|_2+\|(I-Q_r)\hat Q_r(I-Q_r)\|_2\label{EqDecExpSP}.
\end{align}
We now bound successively the four terms on the right-hand side of \eqref{EqDecExpSP}. First, by Lemma \ref{LemEVConcMult}, we have
\begin{align*}
\|\hat Q_r(\hat\Sigma-\mu_rI)R_r\|_2&=\sqrt{\sum_{j\in\mathcal{I}_r}(\hat\lambda_j-\mu_r)^2\|(\hat u_j\otimes\hat u_j) R_r\|_2^2}\\
&\leq Cxm_r\mu_r\sqrt{\sum_{j\in\mathcal{I}_r}\|(\hat u_j\otimes\hat u_j) R_r\|_2^2}= Cxm_r\mu_r\|\hat Q_r R_r\|_2.
\end{align*}
Combined with
\begin{align*}
\|\hat Q_rR_r\|_2&=\sqrt{\sum_{s\neq r}\frac{\|\hat Q_rQ_s\|_2^2}{(\mu_r-\mu_s)^2}}\leq \sqrt{\sum_{\substack{s<r_0:\\s\neq r}}\frac{\|\hat Q_rQ_s\|_2^2}{(\mu_r-\mu_s)^2}+\frac{\|\hat Q_rQ_{\geq r_0}\|_2^2}{(\mu_r-\mu_{r_0})^2}}\\
&\leq Cx\sqrt{\sum_{\substack{s<r_0:\\s\neq r}}\frac{m_r\mu_rm_s\mu_s}{(\mu_r-\mu_s)^4}+\frac{m_r\mu_r\operatorname{tr}_{\geq r_0}(\Sigma)}{(\mu_r-\mu_{r_0})^4}}\leq Cx\sqrt{\sum_{s\neq r}\frac{m_r\mu_rm_s\mu_s}{(\mu_r-\mu_s)^4}}
\end{align*}
(which follows from Lemma \ref{LemBasIneq} and \eqref{EqDefR0}), we get
\begin{align*}
\|\hat Q_r(\hat\Sigma-\mu_rI)\hat{Q}_rR_r\|_2&\leq Cx^2\sqrt{\sum_{s\neq r}\frac{m_r^2\mu_r^2}{(\mu_r-\mu_s)^2}\frac{m_r\mu_rm_s\mu_s}{(\mu_r-\mu_s)^2}}\leq Cx^2\rr_r(\Sigma)\sqrt{\sum_{s\neq r}\frac{m_r\mu_rm_s\mu_s}{(\mu_r-\mu_s)^2}}.
\end{align*}
For the second term on the right-hand side of \eqref{EqDecExpSP}, note that
\begin{align*}
\|(\hat{Q}_r-Q_r)ER_r\|_2&=\sqrt{\sum_{s\neq r}\frac{\|\hat{Q}_r-Q_r)EQ_s\|_2^2}{(\mu_r-\mu_s)^2}}\\
&\leq \sqrt{\sum_{\substack{s<r_0:\\s\neq r}}\frac{\|\hat{Q}_r-Q_r)EQ_s\|_2^2}{(\mu_r-\mu_s)^2}+\frac{\|\hat{Q}_r-Q_r)EQ_{\geq r_0}\|_2^2}{(\mu_r-\mu_{r_0})^2}}.
\end{align*}
By the identity $I=\sum_{t\geq 1}Q_t$, the triangle inequality, and the fact that the Hilbert-Schmidt norm is sub-multiplicative, we have
\begin{align*}
&\frac{\|(\hat{Q}_r-Q_r)EQ_s\|_2}{|\mu_r-\mu_s|}\\
&\leq \sum_{t< r_0}\frac{\|(\hat{Q}_r-Q_r)Q_tEQ_s\|_2}{|\mu_r-\mu_s|}+\frac{\|(\hat{Q}_r-Q_r)Q_{\geq r_0}EQ_s\|_2}{|\mu_r-\mu_s|}\\
&\leq \frac{\|\hat{Q}_r-Q_r\|_2\|Q_rEQ_s\|_2}{|\mu_r-\mu_s|}+\sum_{t< r_0:t\neq r}\frac{\|\hat{Q}_rQ_t\|\|Q_tEQ_s\|_2}{|\mu_r-\mu_s|}+\frac{\|\hat{Q}_rQ_{\geq r_0}\|\|Q_{\geq r_0}EQ_s\|_2}{|\mu_r-\mu_s|}
\end{align*}
and similarly
\begin{align*}
&\frac{\|(\hat{Q}_r-Q_r)EQ_{\geq r_0}\|_2}{|\mu_r-\mu_{r_0}|}\\
&\leq \frac{\|\hat{Q}_r-Q_r\|_2\|Q_rEQ_{\geq r_0}\|_2}{|\mu_r-\mu_{r_0}|}+\sum_{t< r_0:t\neq r}\frac{\|\hat{Q}_rQ_t\|\|Q_tEQ_{\geq r_0}\|_2}{|\mu_r-\mu_{r_0}|}+\frac{\|\hat{Q}_rQ_{\geq r_0}\|\|Q_{\geq r_0}EQ_{\geq r_0}\|_2}{|\mu_r-\mu_{r_0}|}.
\end{align*}
Hence, by Proposition \ref{PropProjConcMult}, Lemma \ref{LemBasIneq}, and \eqref{EqBoundRelCoeff}, we get
\begin{align*}
&\frac{\|(\hat{Q}_r-Q_r)EQ_s\|_2}{|\mu_r-\mu_s|}\\
&\leq Cx^2\rr_r(\Sigma)\frac{\sqrt{m_r\mu_rm_s\mu_s}}{|\mu_r-\mu_s|}+Cx^2\sum_{t< r_0:t\neq r}\frac{\sqrt{m_r\mu_rm_t\mu_t}}{|\mu_r-\mu_t|}\frac{\sqrt{m_t\mu_tm_s\mu_s}}{|\mu_r-\mu_s|}\\
&+Cx^2\frac{\sqrt{m_r\mu_r\operatorname{tr}_{\geq r_0}(\Sigma)}}{|\mu_r-\mu_{r_0}|}\frac{\sqrt{\operatorname{tr}_{\geq r_0}(\Sigma)m_s\mu_s}}{|\mu_r-\mu_s|}\\
&\leq Cx^2\frac{\sqrt{m_r\mu_rm_s\mu_s}}{|\mu_r-\mu_s|}\Big(\rr_r(\Sigma)+\sum_{\substack{t<r_0:\\t\neq r}}\frac{m_t\mu_t}{|\mu_r-\mu_t|}+\frac{\operatorname{tr}_{\geq r_0}(\Sigma)}{|\mu_r-\mu_{r_0}|}\Big)\\
&\leq Cx^2\rr_r(\Sigma)\frac{\sqrt{m_r\mu_rm_s\mu_s}}{|\mu_r-\mu_s|}
\end{align*}
and similarly
\begin{align*}
\frac{\|(\hat{Q}_r-Q_r)EQ_{\geq r_0}\|_2}{|\mu_r-\mu_{r_0}|}&\leq Cx^2\rr_r(\Sigma)\frac{\sqrt{m_r\mu_r\operatorname{tr}_{\geq r_0}(\Sigma)}}{|\mu_r-\mu_{r_0}|}\leq Cx^2\rr_r(\Sigma)\sqrt{\sum_{s\geq r_0}\frac{m_r\mu_rm_s\mu_s}{(\mu_r-\mu_s)^2}}.
\end{align*}
Thus
\begin{equation*}
\|(\hat{Q}_r-Q_r)ER_r\|_2\leq Cx^2\rr_r(\Sigma)\sqrt{\sum_{s\neq r}\frac{m_r\mu_rm_s\mu_s}{(\mu_r-\mu_s)^2}}.
\end{equation*}
Next, the third term on the right-hand side of \eqref{EqDecExpSP} is bounded as follows
\[
\|Q_r(\hat Q_r-Q_r)Q_r\|_2\leq \|Q_r(\hat Q_r-Q_r)Q_r\|_1=\operatorname{tr}(Q_r-Q_r\hat Q_r Q_r)=\|\hat Q_r-Q_r\|_2^2/2,
\]
where we used the fact that $Q_r-Q_r\hat Q_r Q_r$ is self-adjoint and positive. Thus, by  Proposition \ref{PropProjConcMult} and \eqref{EqDefR0},
\[
\|Q_r(\hat Q_r-Q_r)Q_r\|_2\leq Cx^2\sum_{s\neq r}\frac{m_r\mu_rm_s\mu_s}{(\mu_r-\mu_s)^2}\leq Cx^2\rr_r(\Sigma)\sqrt{\sum_{s\neq r}\frac{m_r\mu_rm_s\mu_s}{(\mu_r-\mu_s)^2}}.
\]
Similarly we have
\[
\|(I-Q_r)\hat Q_r(I-Q_r)\|_2=\|(I-Q_r)\hat Q_r\hat Q_r(I-Q_r)\|_2\leq \|\hat Q_r(I-Q_r)\|_2^2=\|\hat Q_r-Q_r\|_2^2/2
\]
and thus
\[
\|(I-Q_r)\hat Q_r(I-Q_r)\|_2\leq  Cx^2\rr_r(\Sigma) \sqrt{\sum_{s\neq r}\frac{m_r\mu_rm_s\mu_s}{(\mu_r-\mu_s)^2}}.
\]
This completes the proof.
\end{proof}

\section{Proofs of the inconsistency results}\label{section:proofs:inconsistency}
The purpose of this section is to prove the inconsistency results from Section~\ref{SecApp}. We first recall Setting \ref{setting:inconsistency:model} and introduce some further notation. Let $\Sigma =\sum_{j\ge 1}\lambda_j (u_j\otimes u_j)$ be a self-adjoint, positive trace class operator on $\mathcal{H}$. Let $ r\geq 1$ and $F=\sum_{j\leq r}\sqrt{\lambda_j}u_j$. Let $\epsilon$ be a Gaussian random variable in $\mathcal{H}$ with expectation $0$ and covariance operator $\Sigma-1/(2r)(F\otimes F)$, and let $f$ be a real random variable defined by $\P(f=0)=1-1/(2r^2)$ and $\P(f=\pm\sqrt{r})=1/(4r^2)$, independent of $\epsilon$. Let $X=f\cdot F+\epsilon$ with covariance operator $\Sigma$ and let $X_1,\dots,X_n$ be independent copies of $X$. Let
\begin{align*}
    \hat\Sigma=\frac{1}{n}\sum_{i=1}^nX_i\otimes X_i,\qquad
    \hat\Sigma_\epsilon=\frac{1}{n}\sum_{i=1}^n\epsilon_i\otimes\epsilon_i,\qquad \Sigma_\epsilon=\Sigma-\frac{1}{2r}F\otimes F,
\end{align*}
\begin{align*}
\tilde{\Sigma}=\Sigma+\tilde x(F\otimes F),\qquad \tilde x=\frac{1}{n}\sum_{i=1}^n\Big(f_i^2-\frac{1}{2r}\Big).
\end{align*}
For $0<z_1<z_2<\infty$, we define the events
\begin{align*}
    \tilde{\mathcal{E}}_{z_1}&=\Big\{\tilde x\geq \frac{z_1}{\sqrt{n}}\Big\},\qquad\tilde{\mathcal{E}}_{z_1,z_2}=\Big\{\frac{z_1}{\sqrt{n}}\leq \tilde x\leq \frac{z_2}{\sqrt{n}}\Big\},\qquad \tilde{\mathcal{E}}=\Big\{\frac{1}{n}\sum_{i=1}^nf_i^2\leq \frac{1}{r}\Big\}.
\end{align*}

\subsection{Inconsistency result for a rank-one perturbation}

In this section, we analyse the eigenstructure of the deterministic rank-one perturbation $\tilde\Sigma=\Sigma+\tilde x (F\otimes F)$
with $\tilde x>0$ fixed (corresponds to $f_1,\dots,f_n$ fixed) and $F=\sum_{j=1}^r\sqrt{\lambda}_ju_j$. Since eigenvalues and eigenvectors of $\tilde\Sigma$ coincide with those of $\Sigma$ for indices $j>r$, we may restrict ourselves without loss of generality to
\begin{align}\label{eq:relative:pert:inconsistency}
    \tilde\Sigma=\sum_{j=1}^r\lambda_j (u_j\otimes u_j)+\tilde x (F\otimes F)\qquad \text{with }\tilde x>0\text{ and } F=\sum_{j=1}^r\sqrt{\lambda}_ju_j
\end{align}
throughout this section. Let $\tilde\lambda_1\geq\dots\geq\tilde\lambda_r>0$ be the eigenvalues of $\tilde\Sigma$, and $\tilde u_1,\dots,\tilde u_r$ be the corresponding eigenvectors of $\tilde\Sigma$. By the interlacing theorem for rank-one symmetric matrices (see e.g.~\cite[Corollary 4.3.9]{MR2978290}) we have
\begin{align}\label{eq:interlacing:rank-one}
    &\lambda_1\leq \tilde\lambda_1 \qquad \text{and}\qquad\lambda_j\leq \tilde\lambda_j\leq \lambda_{j-1}\text{ for }j=2,\dots,r.
\end{align}
Moreover, for $j=2,\dots,r$ such that $\lambda_j<\lambda_{j-1}$, we have $\lambda_j< \tilde\lambda_j< \lambda_{j-1}$ (resp. $\tilde\lambda_1>\lambda_1$ for $j=1$), and $\tilde\lambda_j$, $j =1 \ldots,r$, satisfies the secular equation (see e.g. \cite[Equation (5.14)]{MR1463942})
\begin{align}\label{eq:secular:equation}
\tilde x \sum_{k=1}^r \frac{\lambda_k}{\tilde\lambda_j-\lambda_k}=1.
\end{align}
Using these facts we provide inconsistency results for the eigenvalues $\tilde\lambda_j$ and the corresponding spectral projectors $\tilde P_j=\tilde u_j\otimes \tilde u_j$, $j=1,\dots,r$, provided that a relative rank condition is violated. We consider separately the cases $j=1$ and $j\geq 2$.

\begin{lemma}\label{lemma:eigenvalue:1:inconsistency:perfect:model}
Consider the rank-one perturbation in \eqref{eq:relative:pert:inconsistency}. Suppose that there is a constant $L>0$, such that
\begin{align}\label{eq:rrc:violated:j=1}
    \tilde x\Big(\sum_{k=2}^r \frac{\lambda_k}{\lambda_1-\lambda_k}+\frac{\lambda_1}{\lambda_1-\lambda_2}\Big)\geq L+1.
\end{align}
Then we have
\begin{align}\label{eq:eigenvalue:inconsistency:perfect:model:1}
    \frac{\tilde{\lambda}_1-\lambda_1}{\lambda_1-\lambda_2}\geq L
\end{align}
and
\begin{align}\label{eq:eigenvector:1:inconsistency:perfect:model:2}
    \|\tilde P_1-P_1\|_2^2\geq 2\frac{\lambda_2}{\lambda_1+\lambda_2}\Big(\frac{L}{L+1}\Big)^2\geq 2\frac{\lambda_2}{\lambda_1+\lambda_2}\Big(1-\frac{2}{L}\Big),
\end{align}
where the last inequality holds for $L>2$.
\end{lemma}

\begin{proof}
Write $\tilde \lambda_1=\lambda_1+A(\lambda_1-\lambda_2)$, with $A>0$ by \eqref{eq:interlacing:rank-one}. Then, by \eqref{eq:secular:equation}, we have
\begin{align*}
    1= \tilde x \sum_{k=1}^r \frac{\lambda_k}{\tilde\lambda_1-\lambda_k}&=\frac{\tilde x}{A} \frac{\lambda_1}{\lambda_1-\lambda_2}+\tilde x\sum_{k=2}^r \frac{\lambda_k}{\lambda_1-\lambda_k+A(\lambda_1-\lambda_2)}\\
    &\geq \frac{\tilde x}{1+A} \Big(\frac{\lambda_1}{\lambda_1-\lambda_2}+\sum_{k=2}^r \frac{\lambda_k}{\lambda_1-\lambda_k}\Big).
\end{align*}
Employing $\eqref{eq:rrc:violated:j=1}$, we obtain $A\geq L$ and \eqref{eq:eigenvalue:inconsistency:perfect:model:1} follows. Moreover, by \eqref{EqEvEq} with $j=1$, we have, for all $k=2,\dots,d$,
\begin{align}\label{eq:eigenangle:identity}
\sqrt{\lambda_k}\langle \tilde u_1,u_k\rangle=\tilde x\frac{\lambda_k}{\tilde \lambda_1-\lambda_k}\big(\sqrt{\lambda_1}\langle\tilde u_1,u_1\rangle+\sum_{l>1}\sqrt{\lambda_l}\langle\tilde u_1,u_l\rangle\big),
\end{align}
where we used that all relative coefficients off the diagonal of $\tilde \Sigma$ are equal to $\tilde x$. In particular, summing over $k>1$, we get
\begin{align*}
   \tilde x\frac{\lambda_1}{\tilde\lambda_1-\lambda_1} \sum_{l>1}\sqrt{\lambda_l}\langle \tilde u_1,u_l\rangle=\Big(1-\tilde x\frac{\lambda_1}{\tilde\lambda_1-\lambda_1}\Big)\sqrt{\lambda_1}\langle\tilde u_1,u_1\rangle,
\end{align*}
where we used again \eqref{eq:secular:equation}.
Inserting this into \eqref{eq:eigenangle:identity} with $k=2$, we get
\begin{align*}
    \langle \tilde u_1,u_2\rangle&=\sqrt{\frac{\lambda_2}{\lambda_1}}\frac{\tilde\lambda_1-\lambda_1}{\tilde\lambda_1-\lambda_2}\langle\tilde u_1,u_1\rangle.
\end{align*}
Inserting
\begin{align*}
    \frac{\tilde\lambda_{1}-\lambda_1}{\tilde\lambda_{1}-\lambda_2}&= \frac{A}{A+1}\geq \frac{L}{L+1},
\end{align*}
which follow from \eqref{eq:eigenvalue:inconsistency:perfect:model:1}, we conclude that
\begin{align*}
    |\langle \tilde u_1,u_2\rangle|\geq \sqrt{\frac{\lambda_2}{\lambda_1}}\frac{L}{L+1}|\langle\tilde u_1,u_1\rangle|.
\end{align*}
Hence,
\begin{align*}
    1\geq \langle \tilde u_1,u_1\rangle^2+\langle\tilde u_1,u_2\rangle^2\geq \Big(1+\frac{\lambda_2}{\lambda_1}\Big(\frac{L}{L+1}\Big)^2\Big)\langle \tilde u_1,u_1\rangle^2,
\end{align*}
which, combined with the identity $\|\tilde P_1-P_1\|_2^2=2-2\langle \tilde u_1,u_1\rangle^2$, leads to
\begin{align*}
    \|\tilde P_1-P_1\|_2^2\geq 2\frac{\frac{\lambda_2}{\lambda_1}(\frac{L}{L+1})^2}{1+\frac{\lambda_2}{\lambda_1}(\frac{L}{L+1})^2}.
\end{align*}
This can be simplified to \eqref{eq:eigenvector:1:inconsistency:perfect:model:2}.
\end{proof}

\begin{lemma}\label{lemma:eigenvalue:j:inconsistency:perfect:model}
Consider the rank-one perturbation in \eqref{eq:relative:pert:inconsistency}. Let $2\leq j\leq r$ be such that $\lambda_j<\lambda_{j-1}$. Suppose that there are constants $L>\ell>0$, such that
\begin{align}\label{eq:eigenvalue:j:inconsistency:perfect:model:cond}
    \tilde x\sum_{k=1,k\neq j-1}^r\frac{\lambda_k}{\lambda_{j-1}-\lambda_k}\geq L+1\qquad\text{and}\qquad \tilde x\frac{\lambda_{j-1}}{\lambda_{j-1}-\lambda_j}\leq \ell.
\end{align}
Then we have
\begin{align}\label{eq:eigenvalue:inconsistency:perfect:model:j}
0<\frac{\lambda_{j-1}-\tilde\lambda_j}{\lambda_{j-1}-\lambda_j}\leq \frac{\ell}{L}
\end{align}
and
\begin{align}\label{eq:eigenvector:inconsistency:perfect:model:2}
    \|\tilde P_j-P_j\|_2^2\geq 2\Big(1-\frac{\ell^2}{\ell^2+(L-\ell)^2}\Big)\geq  2\Big(1-\frac{4\ell^2}{L^2}\Big),
\end{align}
where the last inequality holds for $L>2l$.
\end{lemma}


\begin{proof}
By fact that $\lambda_j<\lambda_{j-1}$ and the discussion following \eqref{eq:interlacing:rank-one}, we can write
\begin{align}\label{eq:eigenvalue:inconsistency:perfect:model:j:version}
\tilde\lambda_j=\lambda_{j-1}-A(\lambda_{j-1}-\lambda_j)\quad\text{with}\quad 0<A<1.
\end{align}
Then, using again \eqref{eq:secular:equation}, we have
\begin{align*}
    1&=\tilde x\sum_{k=1}^r\frac{\lambda_k}{\tilde\lambda_j-\lambda_k}=-\frac{\tilde x}{A}\frac{\lambda_{j-1}}{\lambda_{j-1}-\lambda_j}+\tilde x\sum_{k\neq j-1}\frac{\lambda_k}{\lambda_{j-1}-\lambda_k-A(\lambda_{j-1}-\lambda_j)}\\
    &\geq -\frac{\tilde x}{A}\frac{\lambda_{j-1}}{\lambda_{j-1}-\lambda_j}+\tilde x\sum_{k\neq j-1}\frac{\lambda_k}{\lambda_{j-1}-\lambda_k}.
\end{align*}
Using \eqref{eq:eigenvalue:j:inconsistency:perfect:model:cond}, we arrive at
\begin{align*}
    1\geq -\frac{\ell}{A}+L+1\quad\text{and thus}\quad A\leq \frac{\ell}{L}.
\end{align*}
Inserting this into \eqref{eq:eigenvalue:inconsistency:perfect:model:j:version}, we get \eqref{eq:eigenvalue:inconsistency:perfect:model:j}.
Moreover, similarly as in the proof of Lemma \ref{lemma:eigenvalue:1:inconsistency:perfect:model}, we have, for all $k=1,\dots,r$, $k\neq j$,
\begin{align}\label{eq:eigenangle:identity:relative:j}
\sqrt{\lambda_k}\langle \tilde u_j,u_k\rangle= \tilde x\frac{\lambda_k}{\tilde \lambda_j-\lambda_k}\big(\sqrt{\lambda_j}\langle\tilde u_j,u_j\rangle+\sum_{l\neq j}\sqrt{\lambda_l}\langle\tilde u_j,u_l\rangle\big),
\end{align}
from which one can deduce (by the same line of arguments as above) that for $k=j-1$, we have
\begin{align*}
    \langle \tilde u_j,u_{j-1}\rangle=\sqrt{\frac{\lambda_{j-1}}{\lambda_j}}\frac{\tilde\lambda_j-\lambda_j}{\tilde\lambda_{j}-\lambda_{j-1}}\langle\tilde u_j,u_j\rangle.
\end{align*}
Inserting
\begin{align*}
    \frac{\tilde\lambda_{j}-\lambda_j}{\lambda_{j-1}-\tilde\lambda_j}= \frac{1-A}{A}\geq \frac{L-\ell}{\ell},
\end{align*}
which follows from \eqref{eq:eigenvalue:inconsistency:perfect:model:j:version} with $0<A<\ell/L$, we conclude
\begin{align*}
    |\langle \tilde u_j,u_{j-1}\rangle|&\geq\sqrt{\frac{\lambda_{j-1}}{\lambda_j}}\frac{L-\ell}{\ell}|\langle\tilde u_j,u_j\rangle|\geq \frac{L-\ell}{\ell}|\langle\tilde u_j,u_j\rangle|.
\end{align*}
Hence,
\begin{align*}
    1\geq \langle \tilde u_j,u_j\rangle^2+\langle\tilde u_j,u_{j-1}\rangle^2\geq \Big(1+\Big(\frac{L-\ell}{\ell}\Big)^2\Big)\langle \tilde u_j,u_j\rangle^2,
\end{align*}
which in turn yields \eqref{eq:eigenvector:inconsistency:perfect:model:2}, additionally using the identity $\|\tilde P_j-P_j\|_2^2=2-2\langle \tilde u_j,u_j\rangle^2$.
\end{proof}

\subsection{Invoking the relative Weyl and Davis-Kahan inequalities}

We turn to the problem of transferring the inconsistency results for $\tilde\lambda_j,\tilde P_j$ to the case of $\hat\lambda_j,\hat P_j$. A first possibility would be to use the classical Weyl inequality $|\tilde\lambda_j-\hat\lambda_j|\leq \|\hat \Sigma-\tilde \Sigma\|_\infty$ and the Davis-Kahan $\sin \Theta$ inequality (see e.g.~\cite{MR3379106})
\begin{align*}
    \|\hat P_j-\tilde P_j\|_2\leq 2\tilde g_j^{-1}\|\hat \Sigma-\tilde \Sigma\|_\infty,\qquad \tilde g_j=\min(\tilde \lambda_{j-1}-\tilde\lambda_j,\tilde\lambda_j-\tilde\lambda_{j+1}),
\end{align*}
in combination with concentration inequalities. Since this approach leads to an effective rank condition, we instead apply the relative versions from Section \ref{SecRelWeylDK} (by considering $\hat\Sigma$ as a perturbed version of $\tilde \Sigma$).
For this purpose, let
\begin{align*}
    \tilde T_{\leq j}(y)=\sum_{k\leq j}(\tilde\lambda_k+y-\tilde\lambda_j)^{-1/2}\tilde P_k\quad\text{and}\quad \tilde T_j=|\tilde R_j|^{1/2}+\tilde g_j^{-1/2}\tilde P_j,
\end{align*}
where $|\tilde R_j|^{1/2}=\sum_{k\neq j}|\tilde \lambda_k-\tilde \lambda_j|^{-1/2}\tilde P_k$ is the square-root of the reduced resolvent of $\tilde \Sigma$ at $\tilde \lambda_j$. Replacing $\Sigma$ by $\tilde \Sigma$, Lemmas \ref{LemEvRC} and \ref{lemma:relative:DK} yield the following:

\begin{corollary}\label{Cor:relative:Weyl:DK:tilde}
Let $j\geq 1$. Then we have:
\begin{itemize}
    \item[(i)] If $\Vert\tilde T_{\leq j}(y)(\hat\Sigma-\tilde\Sigma)\tilde T_{\leq j}(y)\Vert_\infty\leq 1$, then $\hat\lambda_j\geq \tilde\lambda_j-y$.
    \item[(ii)] If $\tilde g_j>0$, then $\|\hat P_j-\tilde P_j\|_2\leq C_1\Vert\tilde T_j(\hat\Sigma-\tilde\Sigma)\tilde T_j\Vert_\infty$ for some absolute constant $C_1>1$.
\end{itemize}
\end{corollary}

\subsection{A concentration bound}

In this section we proof the following concentration inequality for the quantities appearing in Corollary \ref{Cor:relative:Weyl:DK:tilde}.

\begin{lemma}\label{lemma:transition:tilde:hat}
In Setting \ref{setting:inconsistency:model}, there are absolute constants $c_1,c_2\in(0,1)$ such that the following holds. Consider $f_1,\dots,f_n$ as fixed and suppose that $\tilde x>0$ and $\tilde{\mathcal{E}}$ holds. Let $j\geq 1$. Then, with probability at least $1-e^{-t}$, $1\leq t\leq n$,
\begin{align}\label{eq:transition:tilde:hat:result:Proj}
    c_1\cdot\|\tilde T_j(\hat\Sigma-\tilde{\Sigma}) \tilde T_j\|_{\infty}
    &\leq \sqrt{\frac{1}{n}\frac{\tilde\lambda_j}{\tilde g_j}\rr_j(\tilde\Sigma)}\bigvee\frac{\tilde\lambda_j}{\tilde g_j}\sqrt{\frac{t}{n}},
\end{align}
provided that the right-hand side is smaller than or equal to $c_2$. Moreover, with probability at least $1-e^{-t}$, $1\leq t\leq n$,
\begin{align}\label{eq:transition:tilde:hat:result:Ev}
    c_1\cdot\|\tilde T_{\leq j}(y)(\hat\Sigma-\tilde{\Sigma}) \tilde T_{\leq j}(y)\|_{\infty}
    &\leq \sqrt{\frac{1}{n}\frac{\tilde\lambda_j}{y}\sum_{k\leq j}\frac{\tilde\lambda_k}{\tilde\lambda_k+y-\tilde\lambda_j}}\bigvee\frac{\tilde\lambda_j}{y}\sqrt{\frac{t}{n}},
\end{align}
provided that the right-hand side is smaller than or equal to $c_2$.
\end{lemma}

\begin{proof}
We work on the event that $\tilde x>0$ and $\tilde{\mathcal{E}}$ holds, assuming that $f_1,\dots,f_n$ are fixed. We decompose
\begin{align*}
\hat\Sigma-\tilde{\Sigma}=\hat\Sigma_\epsilon-\Sigma_\epsilon+F\otimes\Big(\frac{1}{n}\sum_{i=1}^nf_i\epsilon_i\Big)+\Big(\frac{1}{n}\sum_{i=1}^nf_i\epsilon_i\Big)\otimes F.
\end{align*}
Using the triangle inequality and the Cauchy-Schwarz inequality, we thus obtain
\begin{align}\label{eq:tilde:delta:1}
\|\tilde T_j(\hat\Sigma-\tilde{\Sigma}) \tilde T_j\|_{\infty}\leq \|\tilde T_j(\hat{\Sigma}_\epsilon-\Sigma_\epsilon)\tilde T_j\|_{\infty}+2\|\tilde T_j F\|\Big\|\frac{1}{n}\sum_{i=1}^nf_i\tilde T_j\epsilon_i\Big\|.
\end{align}
We now apply concentration inequalities to the two terms on the right-hand side. First, applying \cite[Corollary 2]{KL14} to $\epsilon_i'=\tilde T_j\epsilon_i$, $1\leq i\leq n$, having covariance $\tilde T_j\Sigma_\epsilon \tilde T_j$, we get that, with probability at least $1-e^{-t}$, $1\leq t\leq n$,
\begin{align}\label{eq:apply:KL}
    &\|\tilde T_j(\hat\Sigma_\epsilon-\Sigma_\epsilon)\tilde T_j\|_{\infty}\nonumber\\
    &\leq C\sqrt{\frac{1}{n}\|\tilde T_j\Sigma_\epsilon \tilde T_j\|_{\infty}\operatorname{tr}(\tilde T_j\Sigma_\epsilon \tilde T_j)}\bigvee \frac{\operatorname{tr}(\tilde T_j\Sigma_\epsilon \tilde T_j)}{n}\bigvee \|\tilde T_j\Sigma_\epsilon \tilde T_j\|_{\infty}\sqrt{\frac{t}{n}}.
\end{align}
Second, conditional on $f_1,\dots,f_n$, the random variable $\frac{1}{n}\sum_{i=1}^nf_i\tilde T_j\epsilon_i$ is Gaussian with expectation $0$ and covariance $(\frac{1}{n^2}\sum_{i=1}^nf_i^2)\tilde T_j\Sigma_\epsilon \tilde T_j$. Using Gaussian concentration inequalities (see, e.g.,~\cite[Theorem 5.6]{MR3185193}), we get, with probability at least $1-e^{-t}$, $t>0$,
\begin{align*}
    \Big\|\frac{1}{n}\sum_{i=1}^nf_i\tilde T_j\epsilon_i\Big\|\leq \E_{\epsilon}\Big\|\frac{1}{n}\sum_{i=1}^nf_i\tilde T_j\epsilon_i\Big\|+\sqrt{2t\Big(\frac{1}{n^2}\sum_{i=1}^nf_i^2\Big)\|\tilde T_j\Sigma_\epsilon \tilde T_j\|_\infty}.
\end{align*}
Now, on the event $\tilde{\mathcal{E}}$, we have $n^{-1}\sum_{i=1}^nf_i^2\leq 1/r$. Using the Cauchy-Schwarz inequality, we get, with probability at least $1-e^{-t}$, $t>0$,
\begin{align*}
    \Big\|\frac{1}{n}\sum_{i=1}^nf_i\tilde T_j\epsilon_i\Big\|&\leq \sqrt{\Big(\frac{1}{n^2}\sum_{i=1}^nf_i^2\Big)\operatorname{tr}(\tilde T_j\Sigma_\epsilon \tilde T_j)}+\sqrt{2t\Big(\frac{1}{n^2}\sum_{i=1}^nf_i^2\Big)\|\tilde T_j\Sigma_\epsilon \tilde T_j\|_{\infty}}\\
    &\leq \sqrt{\frac{1}{r}}\sqrt{\frac{1}{n}\operatorname{tr}(\tilde T_j\Sigma_\epsilon \tilde T_j)}+\sqrt{\frac{1}{r}}\sqrt{\frac{t}{n}\|\tilde T_j\Sigma_\epsilon \tilde T_j\|_{\infty}}.
\end{align*}
Combining this inequality with
\begin{align}\label{eq:max:ev:Tj}
    \|\tilde T_j F\|\leq  \sqrt{\max\Big(\frac{\tilde\lambda_j}{\tilde g_j},\frac{\tilde\lambda_{j-1}}{\tilde\lambda_{j-1}-\tilde\lambda_j}\Big)}\sqrt{r}\leq \sqrt{\frac{\tilde\lambda_j}{\tilde g_j}}\sqrt{2r},
\end{align}
we obtain with probability at least $1-e^{-t}$, $t>0$,
\begin{align}\label{eq:apply:GC}
    \|\tilde T_j F\|\Big\|\frac{1}{n}\sum_{i=1}^nf_i\tilde T_j\epsilon_i\Big\|
    &\leq C\sqrt{\frac{1}{n}\frac{\tilde\lambda_j}{\tilde g_j}\operatorname{tr}(\tilde T_j\Sigma_\epsilon \tilde T_j)}\bigvee\sqrt{\frac{t}{n}\frac{\tilde\lambda_j}{\tilde g_j}\|\tilde T_j\Sigma_\epsilon \tilde T_j\|_{\infty}}.
\end{align}
In order to simplify \eqref{eq:apply:KL} and \eqref{eq:apply:GC}, we now show that 
\begin{align}\label{eq:compare:tildeSigma:epsSigma}
    \operatorname{tr}(\tilde T_j\Sigma_\epsilon \tilde T_j)\leq  \rr_j(\tilde\Sigma)=\frac{\tilde\lambda_{j}}{\tilde g_{j}}+\sum_{k\neq j}\frac{\tilde\lambda_k}{|\tilde\lambda_{j}-\tilde\lambda_k|}\quad\text{and}\quad
    \|\tilde T_j\Sigma_\epsilon \tilde T_j\|_{\infty}\leq 2\frac{\tilde\lambda_j}{\tilde g_j}.
\end{align}
To see this, first note that 
\begin{align*}
    \Sigma_\epsilon=\Sigma-\frac{1}{2r} F\otimes F\leq\Sigma-\frac{1}{2r} F\otimes F+\Big(\frac{1}{n}\sum_{i=1}^nf_i^2\Big) F\otimes F=\tilde \Sigma,
\end{align*}
meaning that $\tilde\Sigma-\Sigma_\epsilon$ is positive. Hence, we get $\tilde T_j\Sigma_\epsilon \tilde T_j\leq \tilde T_j\tilde\Sigma \tilde T_j$, leading to $\operatorname{tr}(\tilde T_j\Sigma_\epsilon \tilde T_j)\leq  \operatorname{tr}(\tilde T_j\tilde\Sigma \tilde T_j)$ and $\|\tilde T_j\Sigma_\epsilon \tilde T_j\|_{\infty}\leq \|\tilde T_j\tilde\Sigma \tilde T_j\|_{\infty}$ and \eqref{eq:compare:tildeSigma:epsSigma} follows from inserting the definition of $\tilde T_j$ (compare also to \eqref{eq:max:ev:Tj}). Combining \eqref{eq:apply:KL} and \eqref{eq:apply:GC} with \eqref{eq:compare:tildeSigma:epsSigma}, we get, with probability at least $1-e^{-t}$, $1\leq t\leq n$,
\begin{align*}
   \|\tilde T_j(\hat\Sigma-\tilde{\Sigma}) \tilde T_j\|_{\infty}&\leq C\sqrt{\frac{1}{n}\frac{\tilde\lambda_j}{\tilde g_j}\rr_j(\tilde\Sigma)}\bigvee \frac{1}{n}\rr_j(\tilde\Sigma)\bigvee\frac{\tilde\lambda_j}{\tilde g_j}\sqrt{\frac{t}{n}}.
\end{align*}
From this \eqref{eq:transition:tilde:hat:result:Proj} follows. The proof of \eqref{eq:transition:tilde:hat:result:Ev} follows the same line of arguments. We omit the details.
\end{proof}

\subsection{General inconsistency results}\label{sec:General:inconsistency:results}

In this section, we present some general inconsistency results. We separate the cases of eigenvalues and spectral projectors and the cases $j=1$ and $j\geq 2$.

\begin{proposition}\label{ThmLowBoundFM} For a self-adjoint, positive trace class operator $\Sigma$ on $\mathcal{H}$, consider the  model from Setting \ref{setting:inconsistency:model}. Let $z>0$ and $y>0$ be real numbers such that
\begin{equation}\label{EqFixPoint}
\frac{z}{\sqrt{n}}\sum_{k\geq 1}\frac{\lambda_k}{\lambda_1+y-\lambda_k}= 1,\qquad \frac{z}{\sqrt{n}}\sum_{k>r}\frac{\lambda_k}{\lambda_1-\lambda_k}<1/2.
\end{equation}
Then we have
\begin{align*}
\P(\hat{\lambda}_1 - {\lambda}_1>y) \geq  \frac{1-\Phi(4z)}{2} - C\frac{r}{\sqrt{n}}
\end{align*}
with an absolute constant $C>0$ and $\Phi(x)=(1/\sqrt{2\pi})\int_{-\infty}^xe^{-t^2/2}\,dt$.
\end{proposition}

\begin{proof}
Set  $S=(\lambda_1+y-\Sigma)^{-1/2}(\hat \Sigma-\Sigma)(\lambda_1+y-\Sigma)^{-1/2}$. By \cite[Lemma 3.11]{RW17}, we have the implication $\lambda_1(S)> 1\Rightarrow\hat \lambda_1(\Sigma)> \lambda_1(\Sigma)+y$. Using the inequality $\lambda_1(S)\geq \langle v,Sv\rangle/\langle v,v\rangle$, which holds for all $v\in\mathcal{H}$, the implication $\langle v,Sv\rangle/\langle v,v\rangle> 1\Rightarrow\hat{\lambda}_1-\lambda_1> y$ follows. We consider
\begin{align}\label{eq:choice:v:max:ev}
v=(\lambda_1+y-\Sigma)^{-1/2}F\quad \text{such that }\quad \frac{z}{\sqrt{n}}\|v\|^2=\frac{z}{\sqrt{n}}\sum_{k = 1}^r\frac{\lambda_k}{\lambda_1+y-\lambda_k}> \frac{1}{2},
\end{align}
where the last inequality follows from \eqref{EqFixPoint}, and let
\[
w=(\lambda_1+y-\Sigma)^{-1/2}v=(\lambda_1+y-\Sigma)^{-1}F.
\]
Using $\hat\Sigma-\Sigma=\tilde\Sigma-\Sigma+\hat\Sigma-\tilde\Sigma$ and \eqref{eq:choice:v:max:ev}, we get that
\begin{align*}
    \frac{\langle v,Sv\rangle}{\langle v,v\rangle}=\tilde x \langle v,v\rangle+\frac{\langle w,(\hat\Sigma-\tilde\Sigma) w\rangle}{\langle v,v\rangle}>\frac{\tilde x}{2}\frac{\sqrt{n}}{z}+\frac{\langle w,(\hat\Sigma-\tilde\Sigma) w\rangle}{\langle v,v\rangle}.
\end{align*}
Hence, on the event
\begin{align*}
  \Big\{\frac{\langle w,(\hat\Sigma-\tilde\Sigma) w\rangle}{\langle v,v\rangle}\geq 0\Big\}\cap\Big\{\tilde x\geq \frac{2z}{\sqrt{n}}\Big\}=\Big\{\frac{\langle w,(\hat\Sigma-\tilde\Sigma) w\rangle}{\langle v,v\rangle^2}\geq 0\Big\}\cap\tilde{\mathcal{E}}_{2z},
\end{align*}
we have $\hat\lambda_1-\lambda_1>y$. To this event we apply the Berry-Esseen theorem.

\begin{lemma}\label{LemBE1} For $r\geq 1$, we have
\[
\sup_{z\in\mathbb{R}}\Big|\P\Big(\tilde x\leq \frac{z}{\sqrt{n}}\sqrt{1/2-1/(4r^2)} \Big)-\Phi(z)\Big|\leq C\frac{r}{\sqrt{n}}.
\]
for some absolute constant $C>0$.
\end{lemma}

\begin{proof}[Proof of Lemma \ref{LemBE1}]
Recall that $\tilde x=n^{-1}\sum_{i=1}^nf_i^2-1/(2r)$. We have $\operatorname{Var}(f_i^2)=1/2-1/(4r^2)\geq 1/4$ and $\E f_i^6=r/2$. Hence the claim follows from the Berry-Esseen theorem.
\end{proof}

\begin{lemma}\label{LemBE2} For $r\geq 1$, we have
\[
\sup_{z\in\mathbb{R}}\Big|\P\Big(\Big\{\frac{\langle w,(\hat\Sigma-\tilde\Sigma) w\rangle}{\langle v,v\rangle^2}\geq 0\Big\}\cap\tilde{\mathcal{E}}_{2z}\Big) - \frac{1}{2}\P(\tilde{\mathcal{E}}_{2z})\Big|\leq C\frac{r}{\sqrt{n}}.
\]
\end{lemma}
\begin{proof}[Proof of Lemma \ref{LemBE2}]
Set
\begin{align*}
    A=\frac{\langle w,(\hat\Sigma-\tilde\Sigma) w\rangle}{\langle v,v\rangle^2}=\frac{1}{n}\sum_{i=1}^n\bigg(\frac{\langle\epsilon_i,w\rangle^2}{\langle v,v\rangle^2}-\E\frac{\langle\epsilon_i,w\rangle^2}{\langle v,v\rangle^2}+2f_i\frac{\langle \epsilon_i,w\rangle}{\langle v,v\rangle}\bigg).
\end{align*}
By conditioning on $f_1,\dots,f_n$, we have
\begin{align*}
&\sup_{x \in \R}|\P(A \geq 0, \tilde x \geq 2zn^{-1/2}) - (1/2)\P(\tilde x \geq 2zn^{-1/2})| \leq \E|\P(A \geq 0 | f_i,i\leq n ) - 1/2|.
\end{align*}
The random variable $\langle\epsilon,w\rangle/\langle v,v\rangle$ is Gaussian with expectation zero and variance
\begin{align}
\sigma^2&=\frac{\langle w,(\Sigma-1/(2r)(F\otimes F)) w\rangle}{\langle v,v\rangle^2}\nonumber\\&=\bigg(\sum_{j=1}^r\frac{\lambda_j^2}{(\lambda_1+y-\lambda_j)^2}-\frac{1}{2r}\bigg(\sum_{j= 1}^r\frac{\lambda_j}{\lambda_1+y-\lambda_j}\bigg)^2\bigg)\bigg(\sum_{j= 1}^r\frac{\lambda_j}{\lambda_1+y-\lambda_j}\bigg)^{-2}\geq \frac{1}{2r}\label{EqKeAhn},
\end{align}
as can be seen from applying the Cauchy-Schwarz inequality twice. Moreover, we have
\[
\operatorname{Var}\Big(\frac{\langle\epsilon_i,w\rangle^2}{\langle v,v\rangle^2}-\E\frac{\langle\epsilon_i,w\rangle^2}{\langle v,v\rangle^2}+2f_i\frac{\langle \epsilon_i,w\rangle}{\langle v,v\rangle}\Big|f_i,i\leq n\Big)\geq 2\sigma^4\]
and
\[
\E\Big(\Big|\frac{\langle\epsilon_i,w\rangle^2}{\langle v,v\rangle^2}-\E\frac{\langle\epsilon_i,w\rangle^2}{\langle v,v\rangle^2}+2f_i\frac{\langle \epsilon_i,w\rangle}{\langle v,v\rangle}\Big|^3\Big|f_i,i\leq n\Big)\leq C(\sigma^6+|f_i|^3\sigma^3).
\]
Thus the Berry-Esseen theorem gives
\begin{align*}
|\P(A_2 \geq 0 | f_i,i\leq n ) - 1/2| \leq C\sum_{i=1}^n\frac{\sigma^6+|f_i|^3\sigma^3}{n^{3/2}\sigma^6}.
\end{align*}
Taking expectation with respect to the $f_i$ and using $\E f_i^3=1/\sqrt{r}$ and \eqref{EqKeAhn}, we conclude that
\[
\E|\P(A \geq 0 | f_i,i\leq n ) - 1/2|\leq C\frac{r}{\sqrt{n}}
\]
which completes the proof.
\end{proof}

Applying Lemmas \ref{LemBE1} and \ref{LemBE2}, we conclude that
\begin{align*}
\P(\hat\lambda_1-\lambda_1>y)&\geq \P\Big(\Big\{\frac{\langle w,(\hat\Sigma-\tilde\Sigma) w\rangle}{\langle v,v\rangle^2}\geq 0\Big\}\cap\tilde{\mathcal{E}}_{2z}\Big)\\
& \geq \frac{1}{2}\P(\tilde{\mathcal{E}}_{2z})-C\frac{r}{\sqrt{n}}\geq \frac{1-\Phi(4z)}{2}-C\frac{r}{\sqrt{n}},
\end{align*}
and the claim follows from taking complements.
\end{proof}

\begin{proposition}
\label{thm:inconsistency:j=1}
For a self-adjoint, positive trace class operator $\Sigma$ on $\mathcal{H}$, consider the  model from Setting \ref{setting:inconsistency:model}. There are absolute constants $c,C>0$ such that the following holds. Suppose that $z,t>0$, $L>0$ and $0<\delta\leq 1$ are positive real numbers such that
\begin{itemize}
    \item[(i)] $\frac{z}{\sqrt{n}}\sum_{k=2}^r\frac{\lambda_k}{\lambda_1-\lambda_k}\geq L+1$.
    \item[(ii)] $\max\Big(\frac{1}{L},\sqrt{\frac{1}{\sqrt{n}}\Big(1+\frac{1}{L}\frac{\lambda_1}{\lambda_1-\lambda_2}\Big)\Big(\frac{2}{z}+\frac{1}{\sqrt{n}}+\frac{1}{\sqrt{n}}\sum_{k>r}\frac{\lambda_k}{\lambda_1-\lambda_k}\Big)},\Big(1+\frac{1}{L}\frac{\lambda_1}{\lambda_1-\lambda_2}\Big)\sqrt{\frac{t}{n}}\Big)\leq c\delta$.
\end{itemize}
Then we have
\begin{align*}
    \P\Big(\|\hat P_1-P_1\|_2^2\geq \frac{\lambda_2}{\lambda_1+\lambda_2}(2-\delta)\Big)\geq 1-\Phi(2z)-e^{-t}-C\frac{r}{\sqrt{n}}.
\end{align*}
\end{proposition}

\begin{lemma}\label{lemma:tilde:eigenvalue:bounds}
Under (i) of Proposition \ref{thm:inconsistency:j=1}, we have on the event $\tilde{\mathcal{E}}_{z}$,
\begin{align*}
    \frac{\tilde\lambda_1}{\tilde\lambda_1-\tilde\lambda_2}&\leq 1+\frac{1}{L}\frac{\lambda_1}{\lambda_1-\lambda_2},\qquad
    \frac{1}{\sqrt{n}}\rr_1(\tilde\Sigma) \leq \frac{2}{z}+\frac{1}{\sqrt{n}}+\frac{1}{\sqrt{n}}\sum_{k>r}\frac{\lambda_k}{\lambda_1-\lambda_k}.
\end{align*}
\end{lemma}

\begin{proof}[Proof of Lemma \ref{lemma:tilde:eigenvalue:bounds}]
By \eqref{eq:interlacing:rank-one} and the fact that $x\mapsto x/(\tilde\lambda_1-x)$ is increasing for $x<\tilde\lambda_1$, we get
\begin{align*}
   \sum_{k=2}^r\frac{\tilde\lambda_k}{\tilde\lambda_1-\tilde\lambda_k}+\frac{\tilde\lambda_1}{\tilde\lambda_1-\tilde\lambda_2}&\leq 1+2\sum_{k=1}^r\frac{\lambda_k}{\tilde\lambda_1-\lambda_k},\qquad
    \frac{\tilde\lambda_1}{\tilde\lambda_1-\tilde\lambda_2}\leq 1+\frac{\lambda_1}{\tilde\lambda_1-\lambda_1}.
\end{align*}
Hence, the first claim follows from inserting \eqref{eq:eigenvalue:inconsistency:perfect:model:1} into the second inequality. On the event $\tilde{\mathcal{E}}_{z}$, we have $\tilde x \geq zn^{-1/2}$.
Hence, by \eqref{eq:secular:equation}, we get
\begin{align*}
    &\frac{1}{\sqrt{n}}\Big(\sum_{k=2}^r\frac{\tilde\lambda_k}{\tilde\lambda_1-\tilde\lambda_k}+\frac{\tilde\lambda_1}{\tilde\lambda_1-\tilde\lambda_2}\Big)\leq \frac{2\tilde x}{z}\sum_{k=1}^r\frac{\lambda_k}{\tilde\lambda_1-\lambda_k}+\frac{1}{\sqrt{n}}=\frac{2}{z}+\frac{1}{\sqrt{n}}.
\end{align*}
Moreover, using that $\tilde\lambda_k=\lambda_k$ for $k>r$ and \eqref{eq:interlacing:rank-one}, we have
\begin{align}\label{eq:tilde:eigenvalue:bounds:j=1}
    \frac{1}{\sqrt{n}}\sum_{k>r}\frac{\tilde\lambda_k}{\tilde\lambda_1-\tilde\lambda_k}=\frac{1}{\sqrt{n}}\sum_{k>r}\frac{\lambda_k}{\tilde\lambda_1-\lambda_k}\leq \frac{1}{\sqrt{n}}\sum_{k>r}\frac{\lambda_k}{\lambda_1-\lambda_k}
\end{align}
and the second claim follows.
\end{proof}

\begin{proof}[Proof of Proposition~\ref{thm:inconsistency:j=1}]
Assume that the event $\tilde{\mathcal{E}}_{z}\cap \tilde{\mathcal{E}}$ holds. By Lemma \ref{lemma:eigenvalue:1:inconsistency:perfect:model} and (i) with $L\geq 4/\delta$, we have
\begin{align*}
\|\tilde P_1- P_1\|_2\geq \frac{\lambda_1}{\lambda_1+\lambda_2}(2-\delta).    \end{align*}
On the other hand, combining Corollary \ref{Cor:relative:Weyl:DK:tilde} with Lemmas~\ref{lemma:transition:tilde:hat} and \ref{lemma:tilde:eigenvalue:bounds}, we get that, conditional on $f_1,\dots,f_n$ such that the event $\tilde{\mathcal{E}}_{z}\cap \tilde{\mathcal{E}}$ holds, with probability at least $1-e^{-t}$,
\begin{align*}
\|\hat P_1-\tilde P_1\|_2\leq \frac{\delta}{2}\leq  \frac{\lambda_1}{\lambda_1+\lambda_2}\delta,
\end{align*}
provided that the left-hand side in (ii) is bounded by $\min(c_1C_1^{-1}\delta/2, c_2)$. This is satisfied if (ii) holds with $c$ small enough. Combining these bounds, we conclude that
\begin{align*}
    &\P\Big(\|\hat P_1- P_1\|_2>\frac{2\lambda_1}{\lambda_1+\lambda_2}(1-\delta)\Big)\\
    &\geq \P\Big(\Big\{\|\tilde P_1- P_1\|_2>\frac{2\lambda_1}{\lambda_1+\lambda_2}(1-\delta)+\|\tilde P_1- \hat P_1\|_2\Big\}\cap \tilde{\mathcal{E}}_{z}\cap \tilde{\mathcal{E}}\Big)\\
    & \geq \P\Big(\Big\{\|\tilde P_1- P_1\|_2>\frac{\lambda_1}{\lambda_1+\lambda_2}(2-\delta)\Big\}\cap \tilde{\mathcal{E}}_{z}\cap \tilde{\mathcal{E}}\Big)-e^{-t}\geq1-e^{-t}-\P(\tilde{\mathcal{E}}_{z}^c)-\P(\tilde{\mathcal{E}}^c).
\end{align*}
By Lemma \ref{LemBE1}, we have $\P(\tilde{\mathcal{E}}_{z}^c)\leq \Phi(2z)+Cr/\sqrt{n}$. The claim now follows from the following lemma.

\begin{lemma}\label{lem:chernoff}
We have
\begin{align*}
    \P(\tilde{\mathcal{E}}^c)=\P\Big(\frac{1}{n}\sum_{i=1}^nf_i^2>\frac{1}{r}\Big)\leq e^{-\frac{3n}{16r^2}}.
\end{align*}
\end{lemma}

It remains to proof Lemma \ref{lem:chernoff}. By construction, the random variables $f_i^2/r$, $1\leq i\leq n$, are independent Bernoulli random variables with parameter $1/(2r^2)$.
An application of Bernstein's inequality (see e.g.~\cite{MR3185193}) yields
\begin{align}\label{eq:chernoff}
\P\Big(\frac{1}{n}\sum_{i = 1}^n f_i^2 - \frac{1}{2r} \geq t\Big) \leq \exp\Big(-\frac{nt^2}{1+\frac{2rt}{3}}\Big).
\end{align}
Setting $t=1/(2r)$, the claim follows.
\end{proof}

\begin{proposition}\label{thm:inconsistency:j>=2:Ev}
For a self-adjoint, positive trace class operator $\Sigma$ on $\mathcal{H}$, consider the  model from Setting \ref{setting:inconsistency:model}. There are absolute constants $c,C>0$ such that the following holds. Let $2\leq j<r$ and suppose that $0< z_1< z_2\leq \sqrt{n}/(2r)$, $t>0$, $0<\delta<1$ and $L>\ell>0$ are real numbers such that
\begin{itemize}
    \item[(i)] $\frac{z_1}{\sqrt{n}}\sum\limits_{k=1,k\neq j-1}^r\frac{\lambda_k}{\lambda_{j-1}-\lambda_k}\geq L+1\quad\text{and}\quad\frac{z_2}{\sqrt{n}}\frac{\lambda_{j-1}}{\lambda_{j-1}-\lambda_j}\leq \ell$.
    \item[(ii)] $\max(\frac{\ell}{L},\sqrt{\frac{1}{n}\frac{\lambda_{j-1}}{\lambda_{j-1}-\lambda_j}\sum_{k\leq j-1}\frac{\lambda_k}{\lambda_k-\lambda_j}},\frac{\lambda_{j-1}}{\lambda_{j-1}-\lambda_j}\sqrt{\frac{t}{n}})\leq c\delta$.
\end{itemize}
Then we have
\begin{align*}
     \P\Big(\frac{\lambda_{j-1}-\hat\lambda_j}{\lambda_{j-1}-\lambda_{j}}\leq \delta\Big)\geq \Phi(\sqrt{2}z_2)-\Phi(2z_1)-e^{-t}-C\frac{r}{\sqrt{n}}.
\end{align*}
\end{proposition}

\begin{proof}
Assume that the event $\tilde{\mathcal{E}}_{z_1,z_2}$ holds, implying that $\tilde{\mathcal{E}}$ also holds since $z_2\leq \sqrt{n}/(2r)$. By Lemma \ref{lemma:eigenvalue:j:inconsistency:perfect:model} and (i) with $\ell/L\leq \delta/2$, we have
\begin{align}\label{proof:thm:inconsistency:j>=1:tilde:Ev}
    \frac{\lambda_{j-1}-\tilde\lambda_j}{\lambda_{j-1}-\lambda_j}\leq \delta/2.
\end{align}
On the other hand, for $y=\delta(\lambda_{j-1}-\lambda_j)/2$, we get that
\begin{align*}
    \sum_{k\leq j-1}\frac{\tilde\lambda_k}{\tilde\lambda_k+y-\tilde\lambda_j}\leq \sum_{k\leq j-1}\frac{\lambda_k}{\lambda_k+y-\lambda_{j-1}}\leq \frac{2}{\delta}\sum_{k\leq j-1}\frac{\lambda_k}{\lambda_k-\lambda_j},
\end{align*}
as can be seen from \eqref{eq:interlacing:rank-one}, and thus
\begin{align*}
    \frac{\tilde\lambda_j}{y}\leq \frac{2}{\delta}\frac{\lambda_{j-1}}{\lambda_{j-1}-\lambda_j},\qquad\sum_{k\leq j}\frac{\tilde\lambda_k}{\tilde\lambda_k+y-\tilde\lambda_j}\leq \frac{4}{\delta}\sum_{k\leq j-1}\frac{\lambda_{k}}{\lambda_{k}-\lambda_j}.
\end{align*}
Combining Corollary \ref{Cor:relative:Weyl:DK:tilde} (applied with $y=\delta(\lambda_{j-1}-\lambda_j)/2$) with Lemma \ref{lemma:transition:tilde:hat} and the above, we get 
\begin{align}\label{eq:transition:tilde:hat:j>=2:Ev}
    \P\Big(\Big\{\frac{\hat\lambda_j-\tilde\lambda_j}{\lambda_{j-1}-\lambda_j}\geq -\frac{\delta}{2}\Big\}\cap \tilde{\mathcal{E}}_{z_1,z_2}\Big)\geq 1-e^{-t},
\end{align}
provided that (ii) holds with $c=c_1/\sqrt{8}$, where $c_1$ is the constant from \eqref{eq:transition:tilde:hat:result:Ev}. From \eqref{proof:thm:inconsistency:j>=1:tilde:Ev} and \eqref{eq:transition:tilde:hat:j>=2:Ev}, we conclude that
\begin{align*}
    \P\Big(\frac{\lambda_{j-1}-\hat\lambda_j}{\lambda_{j-1}-\lambda_j}\leq \delta\Big)
    &\geq \P\Big(\Big\{\frac{\lambda_{j-1}-\tilde\lambda_j}{\lambda_{j-1}-\lambda_j}+\frac{\tilde\lambda_{j}-\hat\lambda_j}{\lambda_{j-1}-\lambda_j}\leq \delta\Big\}\cap \tilde{\mathcal{E}}_{z_1,z_2}\Big)\\
    &\geq \P\Big(\Big\{\frac{\lambda_{j-1}-\tilde\lambda_j}{\lambda_{j-1}-\lambda_j}\leq \frac{\delta}{2}\Big\}\cap \tilde{\mathcal{E}}_{z_1,z_2}\Big)-e^{-t}=1-e^{-t}-\P(\tilde{\mathcal{E}}_{z_1,z_2}^c).
\end{align*}
By Lemma \ref{LemBE1} applied twice, we have
\begin{align}\label{eq:deviation:calE:z1:z2}
    \P(\tilde{\mathcal{E}}_{z_1,z_2}^c)
    \leq 1-\Phi(\sqrt{2}z_2)+\Phi(2z_1)+C\frac{r}{\sqrt{n}}
\end{align}
and the claim follows.
\end{proof}

\begin{proposition}\label{thm:inconsistency:j>=2}
For a self-adjoint, positive trace class operator $\Sigma$ on $\mathcal{H}$, consider the  model from Setting \ref{setting:inconsistency:model}. There are absolute constants $c,C>0$ such that the following holds. Let $3\leq j< r$ and suppose that $0< z_1< z_2\leq \sqrt{n}/(2r)$, $t>0$, $0<\delta<1$ and $L>\ell>0$ are real numbers such that
\begin{itemize}
    \item[(i)] $\frac{z_1}{\sqrt{n}}\sum_{k=1,k\neq j^*}^r\limits\frac{\lambda_k}{\lambda_{j^*}-\lambda_k}\geq L+1$ for $j^*\in\{j-1,j-2\}\quad\text{and}\quad\frac{z_2}{\sqrt{n}}\frac{\lambda_{j-1}}{g_{j-1}}\leq \ell$,
    \item[(ii)] $\max(\frac{\ell}{L},\sqrt{\frac{1}{n}\frac{\lambda_{j-1}}{g_{j-1}}\rr_{j-1}(\Sigma)},\frac{\lambda_{j-1}}{g_{j-1}}\sqrt{\frac{t}{n}})\leq c\delta$.
\end{itemize}
Then we have
\begin{align*}
     \P\big(\|\hat P_j-P_j\|_2^2\geq 2(1-\delta)\big)\geq \Phi(\sqrt{2}z_2)-\Phi(2z_1)-e^{-t}-C\frac{r}{\sqrt{n}}.
\end{align*}
\end{proposition}

\begin{lemma}\label{lem:compare:tildeSigma:epsSigma:j}
Under (i) of Proposition \ref{thm:inconsistency:j>=2} with $L\geq 4\ell$, we have on the event $\tilde{\mathcal{E}}_{z_1,z_2}$,
\begin{align*}
    \frac{\tilde\lambda_j}{\tilde g_j}&\leq 2\frac{\lambda_{j-1}}{g_{j-1}}
\qquad\text{and}\qquad
    \rr_j(\tilde\Sigma)\leq 3\rr_{j-1}(\Sigma).
    \end{align*}
\end{lemma}
\begin{remark}
Under (i) and (ii) the eigenvalues $\tilde\lambda_{j-1}$ and $\tilde\lambda_{j}$ are pushed towards $\lambda_{j-2}$ and $\lambda_{j-1}$, respectively. This explains why the relative rank at $j-1$ appears in the upper bound.
\end{remark}

\begin{remark}
The case $j=2$ works as well. In this case (i) is only needed for $j^*=j-1$. We omit the details. 
\end{remark}

\begin{proof}[Proof of Lemma \ref{lem:compare:tildeSigma:epsSigma:j}]
Arguing as \eqref{eq:tilde:eigenvalue:bounds:j=1}, it suffices to bound the sum over $k\leq r$, $k\neq j$. First, under (i) of Proposition \ref{thm:inconsistency:j>=2}, the assumptions of Lemma \ref{lemma:eigenvalue:j:inconsistency:perfect:model} are satisfied (with $L$ and $\ell$, because $z_1n^{-1/2}\leq \tilde x\leq z_2n^{-1/2}$ on the event $\tilde{\mathcal{E}}_{z_1,z_2}$). In particular, we have $\tilde\lambda_j=\lambda_{j-1}-A(\lambda_{j-1}-\lambda_j)$ with $A\in(0,1/2]$, provided that $\ell/L\leq  1/2$. Using this, again the monotonicity of the map $x\mapsto x/(\tilde\lambda_k-x)$ and \eqref{eq:interlacing:rank-one}, we get
\begin{align*}
    \sum_{k>j}\frac{\tilde\lambda_k}{\tilde\lambda_j-\tilde\lambda_k}&\leq \sum_{k>j}\frac{\lambda_{k-1}}{\tilde\lambda_j-\lambda_{k-1}}=\sum_{k>j}\frac{\lambda_{k-1}}{\lambda_{j-1}-\lambda_{k-1}-A(\lambda_{j-1}-\lambda_j)} \\
    &\leq \frac{1}{1-A}\sum_{k>j}\frac{\lambda_{k-1}}{\lambda_{j-1}-\lambda_{k-1}}\leq 2\sum_{k>j}\frac{\lambda_{k-1}}{\lambda_{j-1}-\lambda_{k-1}},
\end{align*}
\begin{align*}
    \frac{\tilde\lambda_j}{\tilde\lambda_j-\tilde\lambda_{j+1}}=1+ \frac{\tilde\lambda_{j+1}}{\tilde\lambda_j-\tilde\lambda_{j+1}}\leq 1+2\frac{\lambda_j}{\lambda_{j-1}-\lambda_{j}}\leq 2\frac{\lambda_{j-1}}{\lambda_{j-1}-\lambda_{j}},
\end{align*}
and
\begin{align*}
    \sum_{k<j-1}\frac{\tilde\lambda_k}{\tilde\lambda_k-\tilde\lambda_j}\leq \sum_{k<j-1}\frac{\lambda_{k}}{\lambda_{k}-\lambda_{j-1}}.
\end{align*}
Moreover, the assumptions of Lemma \ref{lemma:eigenvalue:j:inconsistency:perfect:model} are also satisfied with $j$ replaced by $j-1$ and with constants $L$ and $2\ell$, as can be seen from combining (i) with the inequality
\begin{align*}
    \tilde x\frac{\lambda_{j-2}}{\lambda_{j-2}-\lambda_{j-1}}=\tilde x\Big(1+\frac{\lambda_{j-1}}{\lambda_{j-2}-\lambda_{j-1}}\Big)\leq 2\tilde{x}\frac{\lambda_{j-1}}{g_{j-1}}\leq 2z_2\frac{\lambda_{j-1}}{g_{j-1}}.
\end{align*}
In particular, we have $\tilde\lambda_{j-1}=\lambda_{j-2}-A(\lambda_{j-2}-\lambda_{j-1})$ with $A\in(0,1/2]$, provided that $L\geq 4\ell$, and we obtain
\begin{align*}
    \frac{\tilde\lambda_{j}}{\tilde\lambda_{j-1}-\tilde\lambda_j}\leq \frac{\lambda_{j-1}}{\tilde\lambda_{j-1}-\lambda_{j-1}}=\frac{1}{1-A}\frac{\lambda_{j-1}}{\lambda_{j-2}-\lambda_{j-1}}\leq 2\frac{\lambda_{j-1}}{\lambda_{j-2}-\lambda_{j-1}},
\end{align*}
and also
\begin{align*}
    \frac{\tilde\lambda_{j-1}}{\tilde\lambda_{j-1}-\tilde\lambda_j}=1+\frac{\tilde\lambda_{j}}{\tilde\lambda_{j-1}-\tilde\lambda_j}\leq 1+2\frac{\lambda_{j-1}}{\lambda_{j-2}-\lambda_{j-1}}\leq 2\frac{\lambda_{j-2}}{\lambda_{j-2}-\lambda_{j-1}}.
\end{align*}
Combining these estimates, we conclude that
\begin{align*}
    \frac{\tilde\lambda_j}{\tilde g_j}\leq 2\frac{\lambda_{j-1}}{g_{j-1}}\quad\text{and}\quad
   \sum_{k\neq j}\frac{\tilde\lambda_k}{|\tilde\lambda_j-\tilde\lambda_k|} \leq 3\sum_{k\neq j-1}\frac{\lambda_k}{|\lambda_{j-1}-\lambda_k|}.
\end{align*}
From this, the claim follows.
\end{proof}

\begin{proof}[Proof of Proposition \ref{thm:inconsistency:j>=2}]
Assume that the event $\tilde{\mathcal{E}}_{z_1,z_2}$ holds. By Lemma \ref{lemma:eigenvalue:j:inconsistency:perfect:model} and (i) with $\ell^2/L^2\leq \delta/8$, we have $\|\tilde P_j- P_j\|_2\geq 2-\delta$. On the other hand, combining Corollary \ref{Cor:relative:Weyl:DK:tilde}, Lemma \ref{lemma:transition:tilde:hat} and Lemma~\ref{lem:compare:tildeSigma:epsSigma:j}, we get that $\P(\{\|\hat P_j-\tilde P_j\|_2\leq \delta\}\cap \tilde{\mathcal{E}}_{z_1,z_2})\geq 1-e^{-t}$, provided the left-hand side in (ii) is bounded by $\min(c_1C_1^{-1}\delta,c_2)$, where $c_1,c_2$ are the constants from Lemma~\ref{lemma:transition:tilde:hat} and $C_1$ is the constant from Corollary \ref{Cor:relative:Weyl:DK:tilde}. In particular, this is satisfied if (ii) holds with $c$ small enough. Combining these bounds, we conclude that
\begin{align*}
    \P\Big(\|\hat P_j- P_j\|_2>2-2\delta\Big)
    &\geq \P\Big(\Big\{\|\tilde P_j- P_j\|_2>2-2\delta+\|\tilde P_j- \hat P_j\|_2\Big\}\cap \tilde{\mathcal{E}}_{z_1,z_2}\Big)\\
    & \geq \P\Big(\Big\{\|\tilde P_1- P_1\|_2>2-\delta\Big\}\cap \tilde{\mathcal{E}}_{z_1,z_2}\Big)-e^{-t}= 1-e^{-t}-\P(\tilde{\mathcal{E}}_{z_1,z_2}^c).
\end{align*}
Inserting \eqref{eq:deviation:calE:z1:z2}, the claim follows.
\end{proof}

\subsection{Proofs of the inconsistency results from Section~\ref{SecApp}}

\begin{proof}[Proof of Theorem \ref{thm:inconsistency:j=1:CLT}]
The implication from (a) to (b) is immediate from Theorem \ref{CorRootNCons}, since the models from Setting \ref{setting:inconsistency:model} satisfy \eqref{EqMomentAssIID} with $p=4$ and a constant $C_\eta$ which does not depend on $n$. Moreover, the implication from (b) to (c) follows from standard properties of the stochastic Landau symbols.

Now suppose that (c) holds and let $\epsilon >0$. Then we have $\P(|\hat\lambda_1^{(n)}-\lambda_1^{(n)}|> \epsilon(\lambda_1^{(n)}-\lambda_2^{(n)}))\rightarrow 0$ as $n\rightarrow\infty$. Now, for $y=y_n=\epsilon(\lambda_1^{(n)}-\lambda_2^{(n)})$ let $z=z_n>0$ be the unique solution of Equation \eqref{EqFixPoint}. From Proposition \ref{ThmLowBoundFM} it follows that $z_n\rightarrow \infty$ as $n\rightarrow \infty$. Thus
\begin{align*}
&\frac{1}{1+\epsilon}\frac{1}{\sqrt{n}}\sum_{k>  1}\frac{\lambda_k^{(n)}}{\lambda_1^{(n)}-\lambda_k^{(n)}}\leq
\frac{1}{\sqrt{n}}\sum_{k\geq 1}\frac{\lambda_k^{(n)}}{\lambda_1^{(n)}+\epsilon(\lambda_1^{(n)}-\lambda_2^{(n)})-\lambda_k^{(n)}}\rightarrow 0\quad\text{as}\quad n\rightarrow \infty.
\end{align*}
Moreover, using that \eqref{EqCCondAsymp} is equivalent to
\begin{equation*}
\frac{1}{\sqrt{n}}\sum_{k> 1}\frac{\lambda_k^{(n)}}{\lambda_1^{(n)}-\lambda_k^{(n)}}\rightarrow 0\quad\text{as}\quad n\rightarrow \infty,
\end{equation*}
as can be seen from the inequality
\begin{align}
\frac{\lambda_1^{(n)}}{\lambda_1^{(n)}-\lambda_{2}^{(n)}}
&=1+\frac{\lambda_{2}^{(n)}}{\lambda_1^{(n)}-\lambda_{2}^{(n)}}\leq 1+\sum_{k> 1}\frac{\lambda_k^{(n)}}{\lambda_1^{(n)}-\lambda_k^{(n)}},\label{EqRREquiv}
\end{align}
the claim follows.
\end{proof}

\begin{proof}[Proof of Theorem \ref{thm:inconsistency:j=1:projectors}]
The implication from (a) to (b) is immediate from Theorem \ref{ThmTightProjector}, since the models from Setting \ref{setting:inconsistency:model} satisfy \eqref{EqMomentAssIID} with $p=4$ and a constant $C_\eta$ which does not depend on $n$.

Now, suppose that (a) does not hold. By restricting to a subsequence, we can assume that there is some $\epsilon>0$ such that
\begin{align*}
    \sqrt{\frac{1}{n}}\sum_{k>1}\frac{\lambda_k^{(n)}}{\lambda_1^{(n)}-\lambda_k^{(n)}}\geq \epsilon\qquad\forall n\geq 1,
\end{align*}
where we also used \eqref{EqRREquiv}. We choose $z=z_n=(c+2)c^{-1}\epsilon^{-1}$ with constant $c$ from (ii) in Proposition \ref{thm:inconsistency:j=1} such that (i) in Proposition \ref{thm:inconsistency:j=1} holds with $L=2/c$. Moreover, we choose $t=t_n=Cz^2$ with $C$ large enough such that  $e^{-t}\leq (1-\Phi(2z))/2$. With these choices (ii) of Proposition \ref{thm:inconsistency:j=1} is satisfied for $\delta=1/2$ and for all $n$ large enough, as can be seen by using that $n^{-1/2}\lambda_1^{(n)}/(\lambda_1^{(n)}-\lambda_2^{(n)})\rightarrow 0$ as $n\rightarrow \infty$. Hence, using also that $r_n/\sqrt{n}\rightarrow 0$, Proposition \ref{thm:inconsistency:j=1} yields
\begin{align*}
   \liminf_{n\rightarrow\infty}\P\Big(\|\hat P_1^{(n)}-P_1^{(n)}\|_2^2\geq \frac{3}{2}\frac{\lambda_2^{(n)}}{\lambda_1^{(n)}+\lambda_2^{(n)}}\Big)>0.
\end{align*}
From (i) of Proposition \ref{thm:inconsistency:j=1}, it follows that $1\leq r_nn^{-1/2}\lambda_1^{(n)}/(\lambda_1^{(n)}-\lambda_2^{(n)})$, meaning that $\lambda_2^{(n)}\geq (1-r_nn^{-1/2})\lambda_1^{(n)}$. Hence, (b) does not hold.
\end{proof}

\begin{proof}[Proof of Theorem \ref{thm:inconsistency:j>=2:eigenvalue}]
By restricting to a subsequence, the assumptions in \eqref{eq:thm:inconsistency:j>=2:Ev} imply that there is some $\epsilon>0$ such that
\begin{align*}
    \frac{1}{\sqrt{n}}\sum_{k=1:k\neq j-1}^{r_n}\frac{\lambda_k^{(n)}}{\lambda_{j-1}^{(n)}-\lambda_k^{(n)}}\geq \epsilon\qquad\forall n\geq 1.
\end{align*}
For $\delta>0$, we choose $z_1=z_{1,n}=2\epsilon^{-1}$ such that (i) in Proposition \ref{thm:inconsistency:j>=2:Ev} holds with $L=1$. Moreover we choose $z_2=z_{2,n}=Cz_1$ and  $t=t_n=Cz_1^2$ with $C$ large enough such that $\Phi(2z_{1})\leq \Phi(\sqrt{2}z_{2})/2$ and $e^{-t}\leq \Phi(\sqrt{2}z_{2})/4$. For these choices (i) of Proposition \ref{thm:inconsistency:j>=2:Ev} hold for all $n$ large enough, as can be seen from inserting \eqref{eq:thm:inconsistency:j>=2:Ev}. Hence Proposition \ref{thm:inconsistency:j>=2:Ev} yields
\begin{align}\label{eq:corollary:inconsistency:j>=2:Ev}
   \lim_{\delta\rightarrow 0} \liminf_{n\rightarrow\infty}\P\Big(\frac{\lambda_{j-1}^{(n)}-\hat\lambda_j^{(n)}}{\lambda_{j-1}^{(n)}-\lambda_j^{(n)}}\leq\delta\Big)>0.
\end{align}
Inserting
\begin{align*}
    |\hat\lambda_j^{(n)}-\lambda_j^{(n)}|/g_j^{(n)}\geq \frac{\hat\lambda_j^{(n)}-\lambda_j^{(n)}}{\lambda_{j-1}^{(n)}-\lambda_j^{(n)}}= 1-\frac{\lambda_{j-1}^{(n)}-\hat\lambda_j^{(n)}}{\lambda_{j-1}^{(n)}-\lambda_j^{(n)}},
\end{align*}
the claim follows.
\end{proof}

\begin{proof}[Proof of Theorem \ref{thm:inconsistency:j>=2:projector}]
By restricting to a subsequence, the first condition in \eqref{eq:thm:inconsistency:j>=2:Proj} implies that there is some $\epsilon>0$ such that for $j^*\in\{j-1,j-2\}$,
\begin{align*}
    \frac{1}{\sqrt{n}}\sum_{k=1:k\neq j^*}^{r_n}\frac{\lambda_k^{(n)}}{\lambda_{j^*}^{(n)}-\lambda_k^{(n)}}\geq \epsilon\qquad\forall n\geq 1.
\end{align*}
For $\delta>0$, we choose $z_1=z_{1,n}=2\epsilon^{-1}$ such that (i) in Proposition \ref{thm:inconsistency:j>=2} holds with $L=1$. Moreover we choose $z_2=z_{2,n}=Cz_1$ and  $t=t_n=Cz_1^2$ with $C$ large enough such that $\Phi(2z_{1})\leq \Phi(\sqrt{2}z_{2})/2$ and $e^{-t}\leq \Phi(\sqrt{2}z_{2})/4$. For these choices  (i) of Proposition~\ref{thm:inconsistency:j>=2} hold for all $n$ large enough, as can be seen from inserting  \eqref{eq:thm:inconsistency:j>=2:Proj}. Hence, Proposition \ref{thm:inconsistency:j>=2}  yields the claim.
\end{proof}

\begin{proof}[Proof of Theorem \ref{thm:inconsistency:j=1:concentration}]
First, assume that (a) does not hold. Then there is a constant $C>1$ such that
\begin{align*}
    \frac{1}{C}\sqrt{\frac{\log n}{n}}\rr_1(\Sigma^{(n)})\leq 1,\qquad \forall n\geq 1.
\end{align*}
We apply Lemma \ref{LemEvRC}. For this purpose, write
\begin{align*}
    T^{(n)}=T^{(n)}_{\geq 1}(y_n)=\sum_{k\geq 1}\frac{1}{\sqrt{\lambda_1^{(n)}+y_n-\lambda_k^{(n)}}}P_k^{(n)},\qquad y_n=\sqrt{\frac{\log n}{n}}\lambda_1^{(n)}.
\end{align*}
Using the notation from Section \ref{section:proofs:inconsistency} with additional superscript $^{(n)}$, we have
\begin{align*}
    \|T^{(n)}(\hat\Sigma^{(n)}-\Sigma^{(n)})T^{(n)}\|_\infty\leq \|T^{(n)}(\tilde\Sigma^{(n)}-\Sigma^{(n)})T^{(n)}\|_\infty+\|T^{(n)}(\hat\Sigma^{(n)}-\tilde\Sigma^{(n)})T^{(n)}\|_\infty.
\end{align*}
First, since $\tilde\Sigma^{(n)}-\Sigma^{(n)}=\tilde x_n\cdot F^{(n)}\otimes F^{(n)}$ is rank-one, we have
\begin{align*}
    \|T^{(n)}(\tilde\Sigma^{(n)}-\Sigma^{(n)})T^{(n)}\|_\infty&=\tilde x_n\sum_{k\geq 1}\frac{\lambda_k^{(n)}}{\lambda_1^{(n)}+y_n-\lambda_k^{(n)}}\leq \frac{\tilde x_n \lambda_1^{(n)}}{y_n}+\tilde x_n \rr_1(\Sigma^{(n)}),
\end{align*}
and the right-hand side is smaller than $1/2$ if
\begin{align*}
    \tilde x_n\leq \frac{1}{4C}\sqrt{\frac{\log n}{n}}.
\end{align*}
By \eqref{eq:chernoff}, the latter holds with probability at least $1-n^{-c}$ with constant $c>0$ depending only on $C$. Second, following the arguments in Lemma \ref{lemma:transition:tilde:hat}, we have, with probability at least $1-e^{-t_n}$, $t_n\geq 1$,
\begin{align*}
    \|T^{(n)}(\hat\Sigma^{(n)}-\tilde\Sigma^{(n)})T^{(n)}\|_\infty\leq C_1\max\Big(\sqrt{\frac{1}{n}\frac{\lambda_1^{(n)}}{y_n}\sum_{k\geq 1}\frac{\lambda_k^{(n)}}{\lambda_1^{(n)}+y_n-\lambda_k^{(n)}}},\frac{\lambda_1^{(n)}}{y_n}\sqrt{\frac{t_n}{n}}\Big).
\end{align*}
The first term in the maximum is smaller than
\begin{align*}
    C_1\Big(\frac{1}{\log n}+\frac{1}{\sqrt{n\log n}}\rr_1(\Sigma^{(n)})\Big).
\end{align*}
Letting $n\rightarrow \infty$, this term tends to zero by assumption. Hence, for the choice
\begin{align*}
    t_n=\frac{1}{C_1^2}\frac{\log n}{4},
\end{align*}
the maximum is smaller than $1/2$ for all $n$ sufficiently large. By Lemma \ref{LemEvRC}, we conclude that
\begin{align*}
    \P(\sqrt{n}(\hat\lambda_1^{(n)}-\lambda_1^{(n)})/\lambda_1^{(n)}\geq  \sqrt{\log n})\leq n^{-c}+n^{-\frac{1}{4C_1^2}}
\end{align*}
for all sufficiently large $n$. Taking (minus of) the logarithm, we conclude that (b) does not hold.

It remains to prove the implication (a)$\Rightarrow$(b). By restricting to a subsequence, there is a sequence $(a_n)$ of positive real numbers such that $a_n\rightarrow \infty$, $(\log n)/a_n\rightarrow\infty$ and
\begin{align*}
    \sqrt{\frac{\log n}{a_nn}}\sum_{k>1}\frac{\lambda_k^{(n)}}{\lambda_1^{(n)}-\lambda_k^{(n)}}\geq 1,\qquad\forall n\geq 1,
\end{align*}
where we also used \eqref{EqRREquiv}. For $y=y_n=\lambda_1^{(n)}\sqrt{(\log n)/n}$ let $z=z_n>0$ be the unique solution of Equation \eqref{EqFixPoint}. By assumption, there is a constant $C>0$ such that $y_n\leq C(\lambda_1^{(n)}-\lambda_2^{(n)})$ for all $n\geq 1$. Hence, we get that
\begin{align*}
    1&\geq \frac{z_n}{\sqrt{n}}\sum_{k>1}\frac{\lambda_k^{(n)}}{\lambda_1^{(n)}+y_n-\lambda_k^{(n)}}\geq \frac{z_n}{\sqrt{n}}\frac{1}{1+C}\sum_{k>1}\frac{\lambda_k^{(n)}}{\lambda_1^{(n)}-\lambda_k^{(n)}}\geq \frac{z_n}{1+C}\sqrt{\frac{a_n}{\log n}}
\end{align*}
for all $n\geq 1$. We conclude that $z_n^2=o(\log n)$ and the claim follow from Proposition \ref{ThmLowBoundFM} and standard tail bounds for Gaussian random variables (see e.g.~\cite[Eq.~(2.23)]{MR3588285}).
\end{proof}

\section{Additional proofs and results}

\subsection{Bounding bad events by concentration}

For $x > 0$, denote with $\mathcal{E}_x$ the event given in \eqref{Def:event:simple}, and for $r_0 \geq 1$, denote with $\mathcal{E}_{x,r_0}$ the event given in \eqref{Def:event:general}. The following bounds are an immediate consequence of Burkholder's inequality in combination with Markov's inequality.

\begin{proposition}\label{prop_control_events1}
In Setting \ref{iidSetting}, there is a constant $C>0$ depending only on $p$ and $C_\eta$ such that the following holds.
\begin{itemize}
\item[(a)] For all $x > 0$, we have
\begin{align*}
\P\big(\mathcal{E}_{x}^c \big) \leq  Cd^2 (\sqrt{n}x)^{-p/2}.
\end{align*}
\item[(b)] For all $x > 0$ and $r_0\geq 1$, we have
\begin{align*}
\P\big(\mathcal{E}_{x,r_0}^c \big) \leq  Cr_0^2 (\sqrt{n}x)^{-p/2}.
\end{align*}
\end{itemize}
\end{proposition}

\begin{proof}
By Burkholder's inequality and Minkowski's inequality, we have
\[
\E^{4/p}|n\bar{\eta}_{kl}|^{p/2} \leq Cn\E^{4/p}|\eta_k\eta_l|^{p/2}\qquad\forall k,l\geq 1.
\]
Application of the Cauchy-Schwarz inequality then yields
\begin{equation}\label{eq:prop:control:events:1}
\E|\bar{\eta}_{kl}|^{p/2}\leq Cn^{-p/4}\qquad\forall k,l\geq 1.
\end{equation}
Thus, (a) follows from the union bound, Markov's inequality, and \eqref{eq:prop:control:events:1}.
Next, by Minkowski's inequality and \eqref{eq:prop:control:events:1}, we have
\begin{align*}
\E^{4/p}\|Q_s E Q_t \|_2^{p/2} \leq \mu_s\mu_t\sum_{j \in \mathcal{I}_s}\sum_{k \in \mathcal{I}_t} \E^{4/p}\big|\bar{\eta}_{jk}\big|^{p/2} \leq C n^{-1}m_s\mu_sm_t\mu_t,
\end{align*}
and thus
\[
\E\bigg(\frac{\|Q_sEQ_t\|_2}{\sqrt{m_s\mu_sm_t\mu_t}}\bigg)^{p/2}\leq Cn^{-p/4}\qquad\forall s,t<r_0.
\]
Similarly, for all $s<r_0$, we have
\[
\E\bigg(\frac{\|Q_sEQ_{\geq r_0}\|_2}{\sqrt{m_s\mu_s\operatorname{tr}_{\geq r_0}(\Sigma)}}\bigg)^{p/2}\leq Cn^{-p/4},\qquad\E\bigg(\frac{\|Q_{\geq r_0}EQ_{\geq r_0}\|_2}{\operatorname{tr}_{\geq r_0}(\Sigma)}\bigg)^{p/2}\leq Cn^{-p/4}.
\]
Hence, (b) follows from the union bound, Markov's inequality, and the above inequalities.
\end{proof}
If $p > 4$ is small, the bounds in Proposition \ref{prop_control_events1} can be improved.
\begin{proposition}\label{prop_control_events2}
In Setting \ref{iidSetting} with $p>4$, there are constants $C_1,C_2>0$, depending only on $C_\eta$ and $p$, such that the following holds.
\begin{itemize}
\item[(a)] For $x \geq C_1  \sqrt{(\log n)/n}$, we have
\begin{align*}
\P\big(\mathcal{E}_{x} ^c\big) \leq C_2d^2(\log n)^{-p/4} n^{1-p/4}.
\end{align*}
\item[(b)] For  $x \geq C_1  \sqrt{(\log n)/n}$, we have
\begin{align*}
\P\big(\mathcal{E}_{x,r_0} ^c\big) \leq C_2 r_0^2 (\log n)^{-p/4}n^{1-p/4}.
\end{align*}
\end{itemize}
\end{proposition}

\begin{proof}
For $C_1>0$ sufficiently large, an application of the Fuk-Nagaev inequality (cf.~\cite{nagaev1979}) yields
\begin{align}\label{EqFukNag}
\P(|\bar{\eta}_{kl}| > x  ) \leq C \frac{n}{(nx)^{p/2}}\leq (\log n)^{-p/4} n^{1-p/4}.
\end{align}
Combining this with the union bound gives (a). Similarly, (b) follows from Lemma 1 in \cite{JW20}, a Hilbert-space valued version of the Fuk-Nagaev inequality (see also \cite{MR2434306}), and again the union bound.
\end{proof}

\begin{remark}\label{rem:weakdependence}
Both Propositions only appeal to well-known concentration inequalities in the literature for a sequence of independent random variables. By replacing these with analogous results for weakly dependent sequences (e.g.~\cite{dedecker_prieur_2005,merleved_rio_bernstein_2011}) or other dependence structures (e.g. $m$-dependence), the above results can be transferred to a dependent framework.
\end{remark}

\subsection{Additional proofs}

\begin{proof}[Proof of Theorem \ref{CorRootNCons}]
For any $\epsilon>0$, let $R$ be sufficiently large such that $C(2R/3)^{-p/2}\leq \epsilon/2$, where $C$ is the constant from \eqref{eq:prop:control:events:1}, and let $y_n=(R/\sqrt{n})\lambda_j^{(n)}$. Then, by \eqref{EqCCondAsymp}, Condition~\eqref{EqCCond} holds with $x_n=(2/3)R/\sqrt{n}$ for all $n$ large enough. Hence,
\begin{align}\label{Eq:Thm:tight:eigenvalues:1}
x_n\sum_{k\geq j}\frac{\lambda_k^{(n)}}{\lambda_j^{(n)}+y_n-\lambda_k^{(n)}}\leq \frac{2}{3}+x_n\sum_{k>j}\frac{\lambda_k^{(n)}}{\lambda_j^{(n)}-\lambda_k^{(n)}}\leq 1.
\end{align}
Moreover, by Minkowski's inequality and \eqref{eq:prop:control:events:1}, we have
\begin{align*}
&\E^{4/p}\bigg(\sum_{k\geq j}\sum_{l\geq j}\frac{\lambda_k^{(n)}}{\lambda_j^{(n)}+y_n-\lambda_k^{(n)}}\frac{\lambda_l^{(n)}}{\lambda_j^{(n)}+y_n-\lambda_l^{(n)}}(\bar{\eta}_{kl}^{(n)})^2\bigg)^{p/4}\leq \frac{C}{n}\bigg(\sum_{k\geq j}\frac{\lambda_k^{(n)}}{\lambda_j^{(n)}+y_n-\lambda_k^{(n)}}\bigg)^2.
\end{align*}
From this, \eqref{Eq:Thm:tight:eigenvalues:1} and Markov's inequality, we get
\begin{align*}
&\P\bigg(\sum_{k\geq j}\sum_{l\geq j}\frac{\lambda_k^{(n)}}{\lambda_j^{(n)}+y_n-\lambda_k^{(n)}}\frac{\lambda_l^{(n)}}{\lambda_j^{(n)}+y_n-\lambda_l^{(n)}}(\bar{\eta}_{kl}^{(n)})^2> 1\bigg)\\
&\leq\P\bigg(\sum_{k\geq j}\sum_{l\geq j}\frac{\lambda_k^{(n)}}{\lambda_j^{(n)}+y_n-\lambda_k^{(n)}}\frac{\lambda_l^{(n)}}{\lambda_j^{(n)}+y_n-\lambda_l^{(n)}}(\bar{\eta}_{kl}^{(n)})^2> x_n^2\bigg(\sum_{k\geq j}\frac{\lambda_k^{(n)}}{\lambda_j^{(n)}+y_n-\lambda_k^{(n)}}\bigg)^2\bigg)\leq \frac{\epsilon}{2}.
\end{align*}
Combining this with Corollary \ref{PropEvRC} yields $\P(\sqrt{n}(\hat\lambda_j^{(n)}-\lambda_j^{(n)})/\lambda_j^{(n)}> R)\leq \epsilon/2$ for all sufficiently large $n$. The bound $\P(\sqrt{n}(\hat\lambda_j^{(n)}-\lambda_j^{(n)})/\lambda_j^{(n)}< -R)\leq \epsilon/2$ is established in the same manner. The claim now follows from standard arguments.
\end{proof}

\begin{proof}[Proof of Theorem \ref{ThmTightProjector}]
We first establish \eqref{EqSpectralProjCons}. Due to Lemma \ref{lemma:relative:DK}, it suffices to show that $\E \delta_j^{(n)} \to 0$ as $n$ increases, where we recall from \eqref{Def:delta:j} that $\delta_j^{(n)} = \| T_j^{(n)}(\hat\Sigma^{(n)}-\Sigma^{(n)}) T_j^{(n)}\Vert_\infty$. This follows from the inequality $\| T_j^{(n)}(\hat\Sigma^{(n)}-\Sigma^{(n)}) T_j^{(n)}\Vert_\infty \leq \| T_j^{(n)}(\hat\Sigma^{(n)}-\Sigma^{(n)}) T_j^{(n)}\Vert_2$ and similar computations as in the proof of Theorem \ref{CorRootNCons}, together with Assumption \eqref{EqCCondAsymp}.
It remains to prove the second claim. For any $R>0$, we choose $x_n=cR/\sqrt{n}$. Then, by \eqref{EqCCondAsymp}, Condition~\eqref{EqCCond} holds for all $n$ large enough. Hence, Corollary \ref{CorConcentrationMulti} and Proposition \ref{prop_control_events1} yield
\begin{align*}
    \limsup_{n\rightarrow\infty}\P\Big(\sqrt{n} \|\hat P_j^{(n)}-P_j^{(n)}\|_2\Big/\sqrt{\sum_{k\neq j}\frac{\lambda_j^{(n)}\lambda^{(n)}_k}{(\lambda^{(n)}_j-\lambda^{(n)}_k)^2}}>R\Big)\leq Cj_0^2R^{-p/2}.
\end{align*}
Letting $R\rightarrow\infty$, the right-hand side tends to zero, and the second claim follows.
\end{proof}

\begin{proof}[Proof of Theorem \ref{Cor:clt:eigenvalues:anderson}]
We first deal with eigenvalues. By the central limit theorem (Lyapunov), we have
\[
\sqrt{\frac{n}{\operatorname{Var}((\eta_j^{(n)})^2)}}\bar\eta_{jj}^{(n)}\xrightarrow{d}\mathcal{N}(0,1).
\]
By \eqref{EqCCondAsymp}, there exists $a_n \to \infty$ such that $a_n^2n^{-1/2}\rr_j(\Sigma^{(n)})\rightarrow 0$ as $n \to \infty$. Selecting $x_n = a_n n^{-1/2}$, Proposition \ref{prop_control_events1} (ii) (applied with $r_0$ such that $\mu_{r_0}=\lambda_{j_0}$) implies $\P(\mathcal{E}_{x_n,r_0}) \to 1$ as $n\rightarrow \infty$. Due to Theorem \ref{ThmExpEvMult} and the choice of the $a_n$, we then have $\sqrt{n}|\hat\lambda_j^{(n)}-\lambda_j^{(n)}-\bar\eta_{jj}^{(n)}|\xrightarrow{\P}0$ and the claim follows from Slutsky's lemma. 

Next, we deal with angles. We use $r_0$, $a_n$ and $x_n = a_n n^{-1/2}$ from above such that $\P(\mathcal{E}_{x_n,r_0}) \to 1$ and $\sqrt{n}x_n^2\rr_{j}(\Sigma^{(n)})\rightarrow 0$ as $n \to \infty$. First, from \eqref{EqEvEq}, we have
\begin{align*}
\frac{\hat{\lambda}_j^{(n)}-\lambda_k^{(n)}}{\sqrt{\lambda_j^{(n)}\lambda_k^{(n)}}}\langle \hat{u}_j^{(n)},u_k^{(n)}\rangle
&=\bar{\eta}_{kj}^{(n)}\langle\hat{u}_j^{(n)},u_j^{(n)}\rangle+\sum_{l\neq j}\bar{\eta}_{kl}^{(n)}\sqrt{\frac{\lambda_l^{(n)}}{\lambda_j^{(n)}}}\langle\hat{u}_j^{(n)},u_l^{(n)}\rangle.
\end{align*}
Now, Lemma \ref{LemBasIneq} and the Cauchy-Schwarz inequality yield on the event $\mathcal{E}_{x_n,r_0}$ that
\begin{align*}
    \sqrt{n}\sum_{l\neq j}|\bar{\eta}_{kl}^{(n)}|\sqrt{\frac{\lambda_l^{(n)}}{\lambda_j^{(n)}}}|\langle\hat{u}_j^{(n)},u_l^{(n)}\rangle|\leq \sqrt{n}x_n^2\rr_{j}(\Sigma^{(n)})\rightarrow 0\quad\text{as }n\rightarrow \infty.
\end{align*}
Hence, since $\P(\mathcal{E}_{x_n,r_0}) \to 1$ as $n \to \infty$, the left-hand side converges to zero in probability and we obtain that 
\begin{align*}
\frac{\hat{\lambda}_j^{(n)}-\lambda_k^{(n)}}{\sqrt{\lambda_j^{(n)}\lambda_k^{(n)}}} \sqrt{n}\langle \hat{u}_j^{(n)},u_k^{(n)}\rangle
-\sqrt{n}\bar{\eta}_{kj}^{(n)}\langle\hat{u}_j^{(n)},u_j^{(n)}\rangle\xrightarrow{\P} 0.
\end{align*}
By the central limit theorem (Lyapunov), we have
\begin{align*}
\frac{\sqrt{n}}{\sqrt{\operatorname{Var}(\eta_{k}^{(n)}\eta_j^{(n)})}} \bar{\eta}_{kj}^{(n)} \xrightarrow{d} \mathcal{N}\big(0,1\big)
\end{align*}
Moreover, by \eqref{EqGapWeightedCons} and \eqref{EqSpectralProjCons}, we have
\begin{align*}
\frac{\lambda_j^{(n)} - \lambda_k^{(n)}}{\hat\lambda_j^{(n)} - \lambda_k^{(n)}} \xrightarrow{\P} 1
\end{align*}
and 
\begin{align*}
    1-\langle \hat{u}_j^{(n)},u_j^{(n)} \rangle=1-|\langle \hat{u}_j^{(n)},u_j^{(n)}\rangle|\leq 1-\langle \hat{u}_j^{(n)},u_j^{(n)}\rangle^2=\frac{1}{2}\|\hat P_j^{(n)}-P_j^{(n)}\|_2^2\xrightarrow{\P} 0.
\end{align*}
The claim now follows again from Slutsky's lemma.
\end{proof}

\begin{proof}[Proof of Corollary \ref{CorIntro}]
By Proposition \ref{prop_control_events2} (i), we have that, with probability at least $1-C_2d^2(\log n)^{-p/4} n^{1-p/4}$, the bound $|\bar\eta_{kl}|\leq C_1\sqrt{(\log n)/n}$ holds for all $k,l\geq 1$. Hence, on this event, Corollary \ref{CorConcentrationMulti} implies that for all $j\geq 1$ such that \eqref{EqRelEffRank} is satisfied (with $c_1=1/(6C_1)$), we have $|\hat\lambda_j-\lambda_j|/\lambda_j\leq C_3  \sqrt{(\log n)/n}$. This gives the claim for the eigenvalues. The second claim follows similarly from Corollary~\ref{CorConcentrationMulti}. Alternatively, one may also use Proposition \ref{prop_control_events2} (ii), we omit the details.
\end{proof}

\begin{proof}[Proof of Proposition \ref{prop:factor:model}]
The fact that $\E\eta_j = 0$ is obvious. Since $\langle \Gamma u_j, u_j \rangle \geq \lambda_d(\Gamma)$, we have
\begin{align*}
\lambda_j(\Sigma) \geq \sum_{k = 1}^d \omega_k^2 \langle u_j, f_k \rangle^2 + \lambda_d(\Gamma).
\end{align*}
Minkowski's inequality now yields
\begin{align*}
\E^{1/p} |\langle X, u_j \rangle|^p \leq \E^{1/p}|\sum_{k \geq 1} \omega_k F_{k} \langle u_j, f_k \rangle |^p + \E^{1/p}|\langle \Gamma^{1/2}Y, u_j \rangle|^p.
\end{align*}
Treating the first part, Burkholder's inequality and Minkowski's inequality imply
\begin{align*}
\E^{2/p}|\sum_{k = 1}^d \omega_k F_{k} \langle u_j, f_k \rangle|^p &\leq C \sum_{k = 1}^d \omega_k^2 \E^{2/p}|F_{k}|^p \langle u_j, f_k \rangle^2\leq C \sum_{k = 1}^d \omega_k^2 \langle u_j, f_k \rangle^2.
\end{align*}
Similarly, by Burkholder's inequality, Minkowski's inequality, and the inequality $\| \Gamma^{1/2}u_j  \|^2 \leq  \lambda_1(\Gamma)$, we get
\begin{align*}
\E^{2/p}|\langle \Gamma^{1/2}Y, u_j\rangle|^p \leq  C \lambda_1(\Gamma).
\end{align*}
Using the inequality $(x + y)^2 \leq 2 x^2 + 2 y^2$ and the above, we arrive at
\begin{align*}
\E^{2/p} |\langle X, u_j \rangle|^p &\leq C \sum_{k = 1}^d \omega_k^2 \langle u_j, f_k \rangle^2 + C \lambda_1(\Gamma) \leq C \lambda_j(\Sigma)
\end{align*}
and the claim follows from the equation $\eta_j=\lambda_j(\Sigma)^{-1/2}\langle X, u_j \rangle$.
\end{proof}

\begin{proof}[Proof of Corollary \ref{CorrFunctionalDataProj}]
We start with the following more general result:
\begin{lemma}\label{cor:projectors:err}
Suppose that Setting \ref{iidSetting} 
holds with $p>4$. Let $r_0\geq 1$. Then we have
\begin{align*}
&\E \|\hat{Q}_r-Q_r\|_2^2 \leq \frac{2}{n}\sum_{s\neq r}\frac{\mu_s\mu_r}{(\mu_s-\mu_r)^2} \sum_{j \in \mathcal{I}_r}\sum_{k \in \mathcal{I}_s} n\E \bar{\eta}_{jk}^2 \\&+ C\frac{(\log n)^{3/2}}{n^{3/2}} \rr_r(\Sigma)\sum_{s\neq r}\frac{ \mu_r m_r\mu_s m_s}{(\mu_r-\mu_s)^2} + Cr_0^2 m_{r} (\log n)^{-p/4}n^{1-p/4}
\end{align*}
for all $r\geq 1$, such that $\mu_{r_0}\leq \mu_r/2$ and $\rr_r(\Sigma)\leq c\sqrt{n/\log n}$.
\end{lemma}

We can now deduce Corollary \ref{CorrFunctionalDataProj} from Lemma \ref{cor:projectors:err} as follows. We first note that the quantities $n \E \bar{\eta}_{jk}^2$ are uniformly bounded by \eqref{eq:prop:control:events:1}. Combining Lemma \ref{cor:projectors:err} (applied with $r_0 = \sqrt{n}$) with Proposition \ref{prop_control_events2} and \eqref{eq_convex_condi}, we have for $1 \leq j \leq \sqrt{n}(\log n)^{-5/2}$
\begin{align*}
\E\|\hat{P}_j-P_j\|_2^2 &\leq C\frac{j^2}{n} + C\frac{(\log n)^{5/2}}{n^{3/2}} j^3  +
C\frac{r_0^2 n^{1-p/4}}{(\log n)^{p/4}}.
%
\end{align*}
Setting 
Since $p \geq 16$, we obtain the desired bound, provided that we can show that $\lambda_{r_0} \leq \lambda_j/2$. Since $j/r_0 \leq (\log n)^{-5/2}$, this follows from the convexity (cf. \cite{cardot_mas_sarda_2007}): if $j$ is large enough, $k > j$ and \eqref{condi_convex} holds, then $j \lambda_j \geq k \lambda_k$. We conclude that $\lambda_{r_0} \leq \lambda_j/2$ for $n$ large enough, and the proof of Corollary \ref{CorrFunctionalDataProj} is complete.

It remains to prove Lemma \ref{cor:projectors:err}. First, note that from Theorem \ref{ThmExpEvMult}, it follows that, on the event $\mathcal{E}_{x,r_0}$, we have
\begin{equation*}
\|\hat{Q}_r-Q_r\|_2^2\leq 2\|R_rEQ_r\|_2^2+ Cx^3\rr_r(\Sigma) \sum_{s\neq r}\frac{m_r\mu_rm_s\mu_s}{(\mu_r-\mu_s)^2}.
\end{equation*}
Combining this with Proposition \ref{prop_control_events2} and using that for random variables $Y,Z\geq 0$ with $Y\leq C$ we have $\E Y \leq C \P\big(Y \geq Z+a \big) + a + \E Z$, $a>0$, the claim follows. 
\end{proof}

\subsection*{Acknowledgements}

We would like to thank the reviewer for valuable comments and suggestions,
which lead to considerable improvements, both in results and presentation.

\bibliography{revisionmsc}
\bibliographystyle{plain}

\end{document}